 \nonstopmode \numberwithin{equation}{section}
\theoremstyle{plain}
\newtheorem{Thm}{Theorem}[section]
\newtheorem{cor}[Thm]{Corollary}
\newtheorem{lemma}[Thm]{Lemma}
\newtheorem{rem}[Thm]{Remark}
\theoremstyle{definition}
\newtheorem{defn}[Thm]{Definition}
\newtheorem{result}[Thm]{Result}
\newtheorem{matrixcondition}[Thm]{Condition}
\newcommand{\Tr}{\operatorname{Tr}}
\newcommand{\Cov}{\operatorname{Cov}}
\newcommand{\norm}[1]{\left\lVert#1\right\rVert}
\title[CLT for correlated Random matrices]{CLT for LES of correlated Non-Hermitian Random Matrices}
\author{Indrajit Jana}
\address{Indian Institute of Technology, Bhubaneswar}
\email{ijana@iitbbs.ac.in}
\thanks{Indrajit Jana - \textit{Email: ijana@iitbbs.ac.in;}\\ 
}
\author{Sunita Rani}
\address{Indian Institute of Technology, Bhubaneswar}
\email{s21ma09007@iitbbs.ac.in}
\thanks{Sunita Rani - \textit{Email: s21ma09007@iitbbs.ac.in;} (Corresponding author)\\
}
\begin{document}

\maketitle
\date{\today}

\begin{abstract} We consider two $n\times n$ non-Hermitian random matrices such that the $ij$th entry of one matrix is correlated with the $ij$th entry of the other matrix. However, the entries of any particular matrix are i.i.d. random variables. We study the asymptotic behavior of the combined spectrum, and the limit of the linear eigenvalue statistic defined on the combined spectrum. We show that if the random variables are centered with variance $1/n$ and having finite moments, then the centered \textit{Linear Eigenvalue Statistics} (LESs) converge jointly to a bivariate Gaussian distribution. We assumed that the test function used in the LES belongs to Sobolev $H^{2+\delta}$ space. The variance of the limiting Gaussian distribution depends on correlation structure of the matrix entries and the fourth order mixed cumulants of the matrix entries. This generalizes the previous results by Rider, Silverstein\cite{rider2006gaussian}, Cipolloni, Erd\H{o}s, Schr\"oder \cite{cipolloni2023central, cipolloni2021fluctuation}. In particular, we obtain the limiting LES of random centrosymmetric matrices.

\textbf{Keywords:} \textit{Random matrix, Centrosymmetric, Central limit theorem, Linear eigenvalue statistic}

\textbf{Mathematics Subject Classification 2020:} 15B52, 60BXX, 60FXX

\end{abstract}

\tableofcontents
\addtocontents{toc}{\protect\setcounter{tocdepth}{1}} 

\section{Introduction}


In this paper, we focus on analyzing the combined spectrum of two correlated non-Hermitian random matrices. While numerous studies analyzed random matrices with correlated entries \cite{che2017universality, adhikari2019edge, erdHos2019random, alt2020correlated, alt2021inhomogeneous, cipolloni2024out, banerjee2024edge, catalano2024random}, to the best of our knowledge, there are relatively few works specifically on the joint convergence of correlated random matrices. Joint convergence of sample cross-covariance matrices \cite{bhattacharjee2023joint}, independent random elliptic matrices \cite{adhikari2019brown}, independent random Toeplitz matrices \cite{adhikari2024convergence} were studied in the cited articles. In the last three articles, a correlation structure was assumed for intra-matrix elements; however, no correlation structure was considered for inter-matrix elements. The following papers represent more recent advancements in the spectrum of correlated random matrices. One day prior to the release of the first version of this article on arXiv, Kolupaiev \cite{kolupaiev2025spectral} published another article on arXiv that studied the local fluctuations of limiting spectrum of two correlated Hermitian random matrices. In the non-Hermitian context, Bourgade et al. \cite{bourgade2024fluctuations} established the joint convergence of the linear eigenvalue statistics for a time-indexed matrix dynamics at finitely many distinct times. 


 Before continuing any further, let us introduce the following two notions related to the spectrum of a random matrix. Let $\lambda_{1}, \lambda_{2}, \ldots, \lambda_{n}$ be the eigenvalues of an $n\times n$ random matrix $M.$ We define the \textit{Empirical Spectral Measure} (ESM) on the Borel sigma algebra of $\mathbb{C}$ as
\begin{align*}
    \mu_{n}(\cdot) = \frac{1}{n}\sum_{i=1}^{n}\delta_{\lambda_{i}}(\cdot),
\end{align*}
where $\delta_{x}(A)=\mathbf{1}_{\{x\in A\}}.$ This is a random probability measure. It is known that when $M$ is a random non-Hermitian matrix, $\mu_{n}$ converges almost surely in weak topology of measures to the uniform distribution on the unit disc $\{z\in \mathbb{C}:|z|\leq 1\}.$ In other words, if $f:\mathbb{C}\to\mathbb{C}$ is a bounded continuous function, then
\begin{align*}
    \frac{1}{n}\sum_{i=1}^{n}f(\lambda_{i})\stackrel{\text{a.s.}}{\to}\frac{1}{\pi}\int_{\mathbb{C}}f(z)\;\mathrm{d}^{2}z\;\;\text{as }n\to\infty,
\end{align*}
where $\mathrm{d}^{2}z=\mathrm{d}\Re z\;\mathrm{d}\Im z.$ This is known as \textit{Circular law} \cite{girko1985circular, bai1997circular, tao2008random, pan2010circular, gotze2010circular}.

In the above, $\sum_{i=1}^{n}f(\lambda_{i})=:\operatorname{L}_{n}(f)$ is referred as the \textit{Linear Eigenvalue Statistic} (LES) of the matrix $M.$ The fluctuations of the above convergence can be analyzed by studying the asymptotic behavior of $a_{n}(\operatorname{L}_{n}(f)-b_{n})$ for some sequences $\{a_{n}\}_{n},$ $\{b_{n}\}_{n}$ and some test function $f.$ Typically, $b_{n}=\mathbb{E}[\operatorname{L}_{n}(f)],$ but unlike the classical Central Limit Theorem (CLT), $a_{n}$ is not $\mathcal{O}(1/\sqrt{n}),$ primarily because the eigenvalues $\{\lambda_{i}\}_{1\leq i\leq n}$ are not independent.

In our case, we have two $n\times n$ random non-Hermitian matrices whose entries are correlated with each other. We study the LES of the joint spectrum via a linear combination of the individual LESs as mentioned in \eqref{eq:L_n(f)_First}. In \cite{bourgade2024fluctuations}, the LES of matrix dynamics of the form $X_{t}:=e^{-t/2}X + \sqrt{1-e^{-t}} \tilde{X}$ were studied. Here, $X$ is a non-Hermitian matrix with i.i.d. entries, and $\tilde{X}$ is an independent Ginibre ensemble. It was shown that the joint LES (at both macroscopic and mesoscopic scales) of $X_{t}$ and $X_{s}$ for distinct $s$ and $t$ converge to a bivariate Gaussian distribution. However, our result is not directly comparable. The main contribution of this paper is the Theorem \ref{Thm:CLT for LES}, which says that the limiting LES follows a Gaussian distribution. The variance of the limiting Gaussian distribution depends on the fourth mixed cumulant and the correlation structure of the matrix entries. Our proof relies on the methods developed by Erd\H{o}s et. al \cite{cipolloni2023central, cipolloni2021fluctuation}. However, since in this paper we are analyzing two correlated matrices instead of one, we had to consider the interaction between the matrices in several places such as in Section \ref{sec:Reductions}, where we needed to approximate product of resolvents of correlated matrices. The correlation structure starts to appear extensively in technical calculations from Subsection \ref{subsec: I_3^1} onward. Broadly, the LES is converted into an integral of trace of resolvent by using Girko's representation formula \eqref{eq:Girko's formula}. Then it is shown that asymptotically the product of the traces of the resolvents at different points decomposes as per the Isserlis' theorem, which in turn implies that the liming trace of the resolvent is a Gaussian process. 

As a consequence of Theorem \ref{Thm:CLT for LES}, we also obtain the limiting LES for random Centrosymmetric matrices, which was discussed in our previous work \cite{jana2024spectrum}. This article is organized as follows. The main result and the consequences are stated in Section \ref{sec:main result}. The proof of the CLT for LES is provided in Section \ref{sec: clt for les}. However, the key part of the proof relies on establishing the CLT for resolvents, which is presented in Section \ref{sec: clt for resolvents}. Let us begin with introducing the notations, which will be frequently used in this article.

\section{Notations}
Throughout the paper, we adopt the following notations. Let $\mathbf{H} := \{z \in \mathbb{C} \mid \Im{z} > 0\}$ denote the upper half-plane, and let $\mathbf{D} \subset \mathbb{C}$ be the open unit disk. The integration with respect to $\mathrm{d}^{2}z$ is equivalent to the integration with respect to $\mathrm{d}\Re z\;\mathrm{d}\Im z.$ For positive functions $f$ and $g$, we use the notations $f \lesssim g$ and $f\sim g$ to indicate that $f \leq Cg$ and $df\leq g\leq Df$ for some constants $C, d, D>0.$ For $\nu_{1}, \nu_{2} > 0,$ the notation $\nu_1 \ll \nu_2$ means that $\nu_1 \leq c \nu_2$ for sufficiently small constant $c > 0$.

In this paper, we are considering two correlated random matrices $X^{(1)},X^{(2)}.$ We shall use the notation $\mathcal{M}^{(t)}$ to represent a matrix which is a function of the matrix $X^{(t)}$ for $t=1, 2.$ However, $\mathcal{M}^t$ denotes the transpose of the matrix $\mathcal{M}.$ The symbol $\mathbb{I}$ denotes the identity matrix of an appropriate size, while $\mathbf{1}=(1, 1, \ldots, 1)$ denotes a column vector of an appropriate size. For an $n \times n$ matrix $\mathcal{M}$, we define $\langle \mathcal{M} \rangle := \frac{1}{n} \Tr{\mathcal{M}}$. For vectors $x, y \in \mathbb{C}$, the inner product is defined by $\langle x, y \rangle := \sum \bar{x_i} y_i$, and for matrices $\mathcal{M}_1, \mathcal{M}_2 \in \mathbb{C}^{2n \times 2n}$, we define $\langle \mathcal{M}_1, \mathcal{M}_2 \rangle := \langle \mathcal{M}_1^{*} \mathcal{M}_2 \rangle$, where $\mathcal{M}_1^*$ is the conjugate transpose of $\mathcal{M}_1.$ 

We use the notation $\kappa(\zeta_{i_{1}}, \zeta_{i_{2}}, \ldots, \zeta_{i_{l}})$ to denote a $l$-th order joint cumulant of the random variables $\zeta_{1}, \zeta_{2}, \ldots, \zeta_{p}.$ It is defined as follows;
\begin{align*}
    \log\Big(\mathbb{E}\big[\iota\sum_{k=1}^{p}s_{k}\zeta_{k}\big]\Big)=\sum_{l \geq 0} \iota^{l}\sum_{i_1, \ldots, i_l=1}^p \frac{\kappa(\zeta_{i_1}, \zeta_{i_2}\dots, \zeta_{i_l})}{g_{1}!g_{2}!\cdots g_{p}!}s_{1}^{g_{1}}s_{2}^{g_{2}}\cdots s_{p}^{g_{p}},
\end{align*}
where $g_{j}=\#\{i_{q},1\leq q\leq l|i_{q}=j\}.$ We note down a fact that if the random variables $\zeta_{1}, \zeta_{2}, \ldots, \zeta_{p}$ are independent, then $\kappa(\zeta_{i_1}, \zeta_{i_2}\dots, \zeta_{i_l})=0$ whenever $g_{j} < l$ for all $1\leq j\leq p$ \cite[Theorem 7.12]{speicher2023high}.

We say that a sequence of events $\{F_{n}\}_{n}$ occur with high probability if for any arbitrary $K>0,$ there exist $n_{0}=n_{0}(K)\in \mathbb{N}$ such that $\mathbb{P}(F_{n}^{c})<n^{-K}$ for all $n\geq n_{0}.$  
We also use the notion of stochastic domination, which is defined in Definition \ref{defn:Stochastic domination}.
\begin{defn}[Stochastic domination]\label{defn:Stochastic domination}
    Consider families of non-negative random variables,  
    $$ 
    X=\left\{X_{n}(\vartheta) \mid n \in \mathbb{N}, \vartheta \in \Theta^{(n)}\right\}, \quad
    Y=\left\{Y_{n}(\vartheta) \mid n \in \mathbb{N}, \vartheta \in \Theta^{(n)}\right\},
    $$  
    where $\Theta^{(n)}$ is a parameter set. If for any $\delta, K > 0$, there exists $n_{0}=n_0(\delta, K)\in \mathbb{N}$ such that
    $$ 
    \sup _{\vartheta \in \Theta^{(n)}} \mathbb{P}\left[X_{n}(\vartheta) > n^\delta Y_{n}(\vartheta)\right] \leq n^{-K},\;\;\forall\;n\geq n_{0}, 
    $$  
    then $X$ is said to be stochastically dominated by $Y,$ denoted by $X \prec Y$.
\end{defn}


\section{Main result}\label{sec:main result}
Let us consider a $2n\times 2n$ ordered block matrix $X$ defined as follows;
\begin{align}\label{eqn: main definition of block matrix X}
    X=\left(\begin{array}{ll}(X^{(1)})_{n\times n} & 0 \\ 0 & (X^{(2)})_{n\times n}\end{array}\right),
\end{align}
where $X^{(1)}$ and $X^{(2)}$ are i.i.d. random matrices with mean $0$ and variance $1/n$ for $i=1,2$ respectively. The $(i,j)^{th}$ entry of $X^{(1)}$ is correlated to the $(i,j)^{th}$ entry of $X^{(2)}$ as follows;

\begin{equation}\label{eq:cov}
    \begin{aligned}
        &\gamma := n\Cov\big((X^{(1)})_{(i,j)},(X^{(2)})_{(i,j)}\big) = n\Cov(X_{(i,j)},X_{(n+i,n+j)}),
    \end{aligned}
\end{equation}

and assume that $\{\sigma_i\}_{i=1}^{2n}$ are eigenvalues of $X.$ Without loss of generality, let us assume that $\{\sigma_i\}_{i=1}^{n}$ are eigenvalues of $X^{(1)}$ and $\{\sigma_i\}_{i=n+1}^{2n}$ are eigenvalues of $X^{(2)}$.
The limiting ESM of $X^{(1)}$ and $X^{(2)}$ follow the Circular law. Consequently, the limiting ESM of $X$ follows the Circular law as well.

For test functions $f$ defined on the spectrum of $X$, let us define the linear eigenvalue statistics as
\begin{align}\label{eq:L_n(f)_First}
    \mathcal{L}_n(f) := c \mathcal{L}^{(1)}_n(f)+ d \mathcal{L}^{(2)}_n(f),
\end{align}
where $c, d\in \mathbb{C}$ and
\begin{align*}
    \mathcal{L}^{(1)}(f)&:=\sum_{i=1}^n f\left(\sigma_i\right)-\mathbb{E} \sum_{i=1}^n f\left(\sigma_i\right),\\
     \mathcal{L}^{(2)}_n(f)&:=\sum_{i=n+1}^{2n} f\left(\sigma_i\right)-\mathbb{E} \sum_{i=n+1}^{2n} f\left(\sigma_i\right).
\end{align*}
From \cite[Theorem 2.2]{cipolloni2023central} and \cite[Theorem 2.2]{cipolloni2021fluctuation}, we have
\begin{align}
    \mathcal{L}^{(1)}(f)&\stackrel{d}{\to}\mathcal{N}(0, V^{(1)}_{f})\label{eq:LES_X_1}, \\
    \mathcal{L}^{(2)}(f)&\stackrel{d}{\to}\mathcal{N}(0, V^{(2)}_{f}),\label{eq:LES_X_2} 
\end{align}
as $n\to\infty,$ where $\stackrel{d}{\to}$ denotes convergence in distribution, and $V_{f}^{(1)}, V_{f}^{(2)}$ are the variances of the limiting normal distribution, which depend on the test function $f$ and cumulants of the matrix entries. However, $\mathcal{L}^{(1)}(f)$ and $\mathcal{L}^{(2)}(f)$ are not independent due to \eqref{eq:cov}. Therefore, the limiting behavior of $\mathcal{L}_{n}(f)$ can not be derived from the above limits. The main result of the current paper is the proof of
\begin{equation}\label{eq:L_n(f)_Second}
    \mathcal{L}_n(f) \xrightarrow{\  d  \ } \mathcal{N}\left(0, V_{f}\right),
\end{equation}
for non-Hermitian random matrices with correlated entries \eqref{eq:cov} and for general test functions $f$. This also shows that $(\mathcal{L}^{(1)}(f), \mathcal{L}^{(2)}(f))$ converges to a bivariate normal distribution. The main idea of this article makes use of the following Girko's formula \cite{10.1214/13-AOP876}
\begin{equation}\label{eq:Girko's formula}
    \sum_{\sigma \in \operatorname{Spec}(X^{(t)})} f(\sigma)=-\frac{1}{4 \pi} \int_{\mathbb{C}} \Delta f(z) \int_0^{\infty} \Im{\operatorname{Tr} (G^{(t)})^z(\iota \eta)} \mathrm{d} \eta \mathrm{d}^2 z,
\end{equation}
which expresses the linear eigenvalue statistics for any smooth compactly supported test functions in terms of the integral of the trace of the resolvent of  
\begin{equation}{\label{eq:(H)^z}}
    (H^{(t)})^z:=\left(\begin{array}{cc}
0 & X^{(t)}-z \\
(X^{(t)})^*-\bar{z} & 0
\end{array}\right), t=1, 2, 
\end{equation}
where $z \in \mathbb{C}$. The resolvent of $(H^{(t)})^{z}$ is denoted by $(G^{(t)})^{z}(\omega):=((H^{(t)})^{z}-\omega)^{-1}.$

Before proceeding any further, we list down our assumptions on the matrix $X$, which is defined in \eqref{eqn: main definition of block matrix X}. 

\begin{matrixcondition}\label{Condtion::}
    Let $X^{(t)}=[x^{(t)}_{ab}]_{1\leq a, b, \leq n}, t=1, 2,$ be two random matrices such that $x_{ab}^{(t)}\stackrel{\text{i.i.d.}}{\sim}n^{-1/2}\chi^{(t)},$ where $\chi^{(t)}$ satisfy the following conditions;
\begin{enumerate}[(i)]
  \item $\mathbb{E}[\chi^{(t)}] = 0,$ $\mathbb{E}[|\chi^{(t)}|^2] = 1,$ and $\mathbb{E}[(\chi^{(t)})^2] = \gamma_{1},$
  where $$
\gamma_{1} = 
\begin{cases} 
    1, & \text{if } \chi^{(t)} \text{ is a real valued,} \\
    0, & \text{if } \chi^{(t)} \text{ is a complex valued.}
\end{cases}
$$
\item $\mathbb{E}[\chi^{(1)}\chi^{(2)}]=\rho,$ $\mathbb{E}[\chi^{(1)}\overline{\chi^{(2)}}]=\gamma.$
\item There exist constants $C_p >0$, for any $p \in \mathbb{N}$, such that 
$$\mathbb{E}[|\chi^{(t)}|^p] \leq C_p.$$
\end{enumerate}
\end{matrixcondition}

Essentially, we start with an i.i.d. set of bivariate random variables $(\chi^{(1)}, \chi^{(2)})$. Then we create the matrix $X^{(1)}$ out of $n^{-1/2}\chi^{(1)}$s and $X^{(2)}$ out of $n^{-1/2}\chi^{(2)}$s. Individually, each matrix $X^{(1)}$ and $X^{(2)}$ are matrices with i.i.d. entries. Therefore, individually the limiting empirical distribution of the eigenvalues of $X^{(1)}$ and $X^{(2)}$ will follow the circular law. To analyze the fluctuations around the circular law, we consider the linear eigenvalue statistic defined in \eqref{eq:L_n(f)_First}. We take the test functions from the Sobolev space $H_{0}^{2+\delta}(\Omega),$ where $\Omega \supset \overline{\mathbf{D}}:=\{z\in \mathbb{C}:|z|\leq 1\}.$ Here the space $H_{0}^{2+\delta}(\Omega)$ is the closure of the space of compactly supported smooth functions having the following finite norm
\begin{align*}
    \|f\|_{H^{2+\delta}(\Omega)}:=\left\|(1+|\xi|)^{2+\delta} \hat{f}(\xi)\right\|_{L^2(\Omega)},
\end{align*}
where $\hat{f}$ is the Fourier transform of $f.$

\begin{Thm}[Central Limit Theorem for linear eigenvalue statistics]\label{Thm:CLT for LES}
    Let $X^{(1)}$ and $X^{(2)}$ be two $n\times n$ random matrices satisfying Condition \ref{Condtion::}. Let $f\in H^{2+\delta}(\Omega)$ be a test function, where $\delta > 0$ and $\overline{\mathbf{D}}\subset \Omega\subset\mathbb{C}.$ Then the centered linear statistics $\mathcal{L}_n{(f)}$, defined in \ref{eq:L_n(f)_First}, converges in distribution to a complex Gaussian random variable $\mathcal{L}(f),$
where $\mathbb{E}[\mathcal{L}(f)]=0,$ $\mathbb{E}[\mathcal{L}(f)^{2}]= C(\bar{f},f),$ and $\mathbb{E}[|\mathcal{L}(f)|^2]= C(f,f):=V_f$. The covariance kernel $C(g, f)$ is given by
\begin{equation}
    \begin{aligned}\label{eq:covariance kernel}
        C(g,f) := &-\frac{1}{8{\pi}^2} \int_{\mathbb{C}} \mathrm{d}^2 z_1 \int_{\mathbb{C}} \mathrm{d}^2 z_2 \Delta f\left(z_1\right) \Delta \overline{g\left(z_2\right)} \int_0^{\infty} \mathrm{d} \eta_1 \int_0^{\infty} \mathrm{d} \eta_2 L.
    \end{aligned}
\end{equation}
The above notations are defined as follows;
\begin{equation}\label{notation: limit notations}
    \begin{aligned}
        &L=c^2\widehat{V_{ij}^{(1)}}+d^2\widehat{V_{ij}^{(2)}}+8cd\gamma (L_{ji}^{12}+L_{ji}^{21})+2cd\rho  N_{ij} +c^2(\kappa_4)_{11}U_i^{(1)}U_j^{(1)}\\
        &\;\;\;\;+d^2(\kappa_4)_{22}U_i^{(2)}U_j^{(2)}+cd(\kappa_4)_{12} \big(U_i^{(1)}U_j^{(2)}+U_j^{(1)}U_i^{(2)}\big),\\
        &(\kappa_4)_{ts}= \kappa\left(\chi^{(t)},\bar{\chi}^{(t)}, \chi^{(s)}, \bar{\chi}^{(s)}\right),\\
        &\widehat{V_{ij}^{(t)}}\left(z_i, z_j, \eta_i, \eta_j\right)\\
        &=
    \begin{cases} 
        V_{ij}^{(t)}\left(z_i, z_j, \eta_i, \eta_j\right), \text{if $X^{(t)}$ is $\mathbb{C}$-valued}, \\ 
        V_{ij}^{(t)}\left(z_i, z_j, \eta_i, \eta_j\right) + V_{ij}^{(t)}\left(z_i, \overline{z_j}, \eta_i, \eta_j\right),\text{if $X^{(t)}$ is $\mathbb{R}$-valued},
    \end{cases}\\
    &V_{ij}^{(t)}\left(z_i,z_j,\eta_i,\eta_j\right)\\ 
    &=\frac{1}{2} \partial_{\eta_i} \partial_{\eta_j} \log \Big[1+\big(u_i^{(t)} u_j^{(t)}|z_i||z_j|\big)^2-(m_i^{(t)})^2 (m_j^{(t)})^2-2 u_i^{(t)} u_j^{(t)} \Re z_i \overline{z_j}\Big],\\
         &L_{ji}^{12} =\frac{ m_j^{(1)}m_i^{(2)} u_j^{(1)}u_i^{(2)}\Re{(\overline{z_i}z_j)}}{\beta_j^{(1)}\beta_i^{(2)}},
        L_{ji}^{21} =\frac{m_j^{(2)} m_i^{(1)}u_j^{(2)}u_i^{(1)} \Re{(\overline{z_i}z_j)}}{\beta_j^{(2)}\beta_i^{(1)}},\\
       & N_{ij}=\big( \partial_{\eta_i}(m_i^{(1)})\big)\big( \partial_{\eta_j}(m_j^{(2)})\big)+\big( \partial_{\eta_i}(m_i^{(2)})\big)\big( \partial_{\eta_j}(m_j^{(1)})\big),\\
        &U_i^{(t)} = \frac{i}{\sqrt{2}} \partial_{\eta_i} (m_i^{(t)})^2,\\ 
    \end{aligned}
\end{equation}
$m_i^{(t)}=(m^{z_i})^{(t)}(\iota\eta_i)$ is the solution of the equation \eqref{eq:MDS}, $u_{i}=u^{z_i}(\iota\eta_i)=-\frac{\iota \Im m_{i}}{\eta_{i}+\Im m_{i}},$ and $\beta_{i}^{(t)}=1-(m_{i}^{(t)})^{2} - (u_{i}^{(t)})^{2}|z_{i}|^{2}.$
\end{Thm}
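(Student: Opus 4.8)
The plan is to follow the Girko-type scheme used by Cipolloni--Erd\H{o}s--Schr\"oder \cite{cipolloni2023central, cipolloni2021fluctuation}, but carried out simultaneously for the two correlated matrices $X^{(1)}, X^{(2)}$. First I would use Girko's formula \eqref{eq:Girko's formula} to write each $\mathcal{L}^{(t)}_n(f)$ as $-\frac{1}{4\pi}\int_{\mathbb C}\Delta f(z)\int_0^\infty \Im \operatorname{Tr}(G^{(t)})^z(\iota\eta)\,\mathrm d\eta\,\mathrm d^2 z$ minus its expectation, so that $\mathcal L_n(f)$ becomes a linear functional (with coefficients $c,d$) of the centered resolvent traces $\langle (G^{(t)})^{z}(\iota\eta)\rangle - \mathbb E\langle(G^{(t)})^{z}(\iota\eta)\rangle$. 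Since $f\in H^{2+\delta}$ is not compactly supported/smooth, the first genuine step is the standard regularization and truncation: split the $\eta$-integral at small and large scales, replace $f$ by a mollified compactly supported version, and control the errors using the local law for $(H^{(t)})^z$ and the $H^{2+\delta}$ norm, exactly as in Section~\ref{sec:Reductions}. This reduces the whole statement to understanding the joint law of the family $\{\langle(G^{(t)})^{z}(\iota\eta)\rangle\}$ for $z$ in a neighborhood of the unit circle and $\eta$ in a mesoscopic-to-order-one window.

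\textbf{Reduction to a CLT for resolvents.} The core is then the statement proved in Section~\ref{sec: clt for resolvents}: the centered, normalized traces $\{ \langle (G^{(t_k)})^{z_k}(\iota\eta_k)\rangle - \mathbb E[\cdot]\}_{k}$, for finitely many points $(t_k,z_k,\eta_k)$, converge jointly to a Gaussian field whose covariance is the kernel assembled in \eqref{notation: limit notations}. Assuming that CLT for resolvents, the theorem follows by: (i) checking that the random variables $\int_0^\infty \Im\langle (G^{(t)})^{z}(\iota\eta)\rangle\mathrm d\eta$ inherit joint Gaussianity (a continuity/tightness argument integrating the finite-dimensional convergence against $\Delta f$, again using the local law to get the required moment bounds uniformly in $z,\eta$); (ii) computing the limiting covariance by plugging the resolvent covariance into the double $z$-integral and double $\eta$-integral, and performing the $\eta_i,\eta_j$ antiderivatives --- this is where the $\partial_{\eta_i}\partial_{\eta_j}\log[\cdots]$ structure of $V_{ij}^{(t)}$, the $\partial_{\eta_i}m_i^{(t)}$ terms in $N_{ij}$ and $L_{ji}^{12}, L_{ji}^{21}$, and the $\partial_{\eta_i}(m_i^{(t)})^2$ in $U_i^{(t)}$ all come from. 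The identification of the quadratic form in $(c,d)$ with $V_f = C(f,f)$ and of the ``pseudo-covariance'' $\mathbb E[\mathcal L(f)^2]=C(\bar f,f)$ is then bookkeeping with the real/complex dichotomy ($\gamma_1\in\{0,1\}$) encoded in $\widehat{V_{ij}^{(t)}}$.

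\textbf{The heart of the matter: the resolvent CLT and the cross terms.} For the resolvent CLT I would use the cumulant expansion / self-consistent equation method. Writing the self-consistent (matrix Dyson) equation for $(G^{(t)})^z$, the leading deterministic behavior is $(M^{z})^{(t)}$ with scalar part $m^{(t)}_i$, and the fluctuation $\langle (G^{(t)})^z(\iota\eta)\rangle - \langle (M^z)^{(t)}\rangle$ satisfies a linear stochastic equation whose inhomogeneity is a quadratic-in-entries fluctuation term plus higher cumulant corrections. Applying the cumulant expansion to products of two such traces $\langle (G^{(t)})^{z_i}(\iota\eta_i)\rangle\langle(G^{(s)})^{z_j}(\iota\eta_j)\rangle$, the second-order cumulants of the entries produce the ``ladder''/$(1-$ two-body operator$)^{-1}$ structure; because $x^{(1)}_{ab}$ and $x^{(2)}_{ab}$ are correlated through $\rho=\mathbb E[\chi^{(1)}\chi^{(2)}]$ and $\gamma=\mathbb E[\chi^{(1)}\overline{\chi^{(2)}}]$, the two-body stability operator acquires off-diagonal (in the index $t$) blocks, and inverting it is what yields the $\gamma$-weighted terms $L_{ji}^{12}+L_{ji}^{21}$ with denominators $\beta^{(1)}_j\beta^{(2)}_i$ and the $\rho$-weighted term $N_{ij}$. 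The fourth-order cumulants $(\kappa_4)_{ts}=\kappa(\chi^{(t)},\bar\chi^{(t)},\chi^{(s)},\bar\chi^{(s)})$ contribute the rank-one $U^{(t)}_iU^{(s)}_j$ pieces. Higher cumulants and all cross-diagonal ladder insertions must be shown to be subleading, using the local law and the decay $\|(M^z)^{(t)}\|\lesssim 1$ together with $\|\beta^{(t)}_i\|$ bounded away from $0$ in the relevant regime.

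\textbf{Main obstacle.} The step I expect to be hardest is precisely the analysis of the \emph{coupled} stability operator: for a single i.i.d. matrix the two-body operator is explicit and its inverse gives the classical $\beta$-denominator, but here one must diagonalize (or at least invert with uniform control) a $2\times 2$ system coupling the $X^{(1)}$- and $X^{(2)}$-channels with off-diagonal weights $\gamma$ and $\rho$, show that it remains invertible uniformly for $z$ near the unit circle and $\eta$ down to the mesoscopic scale, and then extract the explicit kernel \eqref{notation: limit notations} after integrating out $\eta_i,\eta_j$. Controlling the error terms from this coupled inversion uniformly enough to survive the (non-smooth, only $H^{2+\delta}$) test-function integration --- in particular near the edge $|z|=1$ where $m^{(t)}$ and $\beta^{(t)}$ degenerate --- is the technical crux, and is where the bulk of Sections~\ref{sec:Reductions}--\ref{sec: clt for resolvents} must be devoted.
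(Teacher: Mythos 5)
Your overall scheme is correct and matches the paper: apply Girko's formula to write $\mathcal{L}_n(f)$ as an integral of centered resolvent traces, split the $\eta$-integral into regimes, show only the middle regime $[\eta_c,T]$ matters (Lemma~\ref{Lemma:J_T^t_bound} and Corollary~\ref{cor:reduced_to_I_eta_c}), and reduce to the CLT for resolvents (Theorem~\ref{Thm:CLT for resolvents}), which is then proved by cumulant expansion.

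However, your \emph{main obstacle} paragraph contains a genuine misreading of where the difficulty lies. You anticipate that the entrywise correlation between $X^{(1)}$ and $X^{(2)}$ forces the two-body stability operator to acquire off-diagonal blocks coupling the $t=1$ and $t=2$ channels, and that one must diagonalize and invert this coupled $2\times2$ system, with the $L_{ji}^{12},L_{ji}^{21}$ and $N_{ij}$ kernels emerging from that inversion. That is not what happens, and in fact the paper does not define or invert any such coupled operator. The reason is structural: each $X^{(t)}$ separately has i.i.d.\ entries, so $G^{(t)}$ satisfies its own matrix Dyson equation with its own one-body and two-body stability operators $\mathcal{B}^{(t)}$ and $\hat{\mathcal{B}}^{(t)}_{12}$ (defined in \eqref{eqn: one body stability definition} and \eqref{def:2-body stability operator} only for products of resolvents of the \emph{same} matrix). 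Cross-resolvent products $G_1^{(1)}BG_i^{(2)}$ are approximated simply by $M_1^{(1)}BM_i^{(2)}$ via two independent applications of the one-body local law — see \eqref{eq:approx} and the explicit estimate \eqref{eq:mixed_terms_approx} — with \emph{no} geometric-series resummation, because the self-renormalized cross terms $S_5^{(t)},S_6^{(t)}$ are error terms (eq.~\eqref{eq:S_5+S_6}, Lemma~\ref{lemma:overlinebound}). The $\gamma$- and $\rho$-weighted kernel pieces thus appear \emph{linearly} in the cross-cumulants $\kappa((ba)_1,(ab)_2)=\gamma/n$ and $\kappa((ba)_1,(ba)_2)=\rho/n$ at the first step of the cumulant expansion (the terms $T_3^{(1)},T_4^{(1)}$ in \eqref{eq:I_1^{(3)}_final}), and the $\beta_j^{(1)}\beta_i^{(2)}$ denominators in $L_{ji}^{12},L_{ji}^{21}$ come from the one-body factors $A_j^{(1)},A_i^{(2)}$ (each of which carries a $(\mathcal{B}^{(t)})^{*-1}$), not from a coupled resolvent. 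So the step you flag as the technical crux is actually easier than you expect, while the genuine work of the paper is in the careful bookkeeping of the many error terms (Subsections~\ref{subsec: S_2 evaluation}--\ref{subsec: Evaluation of other S_i}, Lemma~\ref{lemma:overlinebound}) and in the $\eta$-regime and $Z_i$-region removal estimates of Section~\ref{sec: clt for les}.
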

The above theorem is proved using the resolvents of the matrix $(H^{(t)})^{z},$ which is $(G^{(t)})^{z}.$ Using \eqref{eq:Girko's formula}, the linear eigenvalue statistics $\mathcal{L}_{n}(f)$ can be expressed in terms of $(G^{(t)})^{z}$ as follows; 

\begin{align}\label{eq:J_T notations}
& \mathcal{L}^{(t)}_n(f)\\
=& \frac{1}{4 \pi} \int_{\mathbb{C}} \Delta f(z)\left[\log|\operatorname{det}\left((H^{(t)})^z-iT\right)|-\mathbb{E} \log |\operatorname{det}\left((H^{(t)})^z-iT\right)|\right] \mathrm{d}^2z\notag \\
&-\frac{n}{2 \pi \iota} \int_{\mathbb{C}} \Delta f\left[\left(\int_0^{\eta_0}+\int_{\eta_0}^{\eta_c}+\int_{\eta_c}^T\right)\left[\langle (G^{(t)})^z(\iota\eta)-\mathbb{E} (G^{(t)})^z(\iota\eta)\rangle \right] \mathrm{d} \eta\right] \mathrm{d}^2 z\notag \\
=&: J_T^{(t)}+(I_0^{\eta_0})^{(t)} +(I_{\eta_0}^{\eta_c})^{(t)}+(I_{\eta_c}^T)^{(t)}.\notag
\end{align}

Here, we have decomposed the whole integral into four parts, namely $(0, \eta_{0}),$ $[\eta_{0}, \eta_{c}),$ $[\eta_{c}, T),$ and $[T,\infty),$ where $\eta_{0} = n^{-1-\delta_{0}},$ $\eta_{c}=n^{-1+\delta_{1}},$ and $T=n^{C},$ for some small constants $\delta_{0}, \delta_{1} > 0$ and a large positive constant $C.$ In our context, the linear eigenvalue statistic $\mathcal{L}_{n}(f)=c\mathcal{L}_{n}^{(1)}(f)+d\mathcal{L}_{n}^{(2)}(f),$ which can similarly be expressed as
\begin{equation}\label{eq:L_n(f)}
   \begin{aligned}
& \mathcal{L}_n(f) \\
&= c \mathcal{L}^{(1)}_n(f)+d \mathcal{L}^{(2)}_n(f)\\
&=: J_T+I_0^{\eta_0}+I_{\eta_0}^{\eta_c}+I_{\eta_c}^T,\\
  \end{aligned} 
\end{equation}
where 
\begin{align*}
    J_T &= cJ_T^{(1)} + dJ_T^{(2)},&I_0^{\eta_0} &= c(I_0^{\eta_0})^{(1)} + d(I_0^{\eta_0})^{(2)},\\
    I_{\eta_0}^{\eta_c} &= c(I_{\eta_0}^{\eta_c})^{(1)} + d(I_{\eta_0}^{\eta_c})^{(2)},&I_{\eta_c}^T &= c(I_{\eta_c}^T)^{(1)} + d(I_{\eta_c}^T)^{(2)}.
\end{align*}
Now, we handle $J_T$, $I_0^{\eta_0}$, $I_{\eta_0}^{\eta_c}$, and $I_{\eta_c}^T$ using different techniques. In particular, from \eqref{eqn: bound on J_T, etc}, we observe that the contribution of $J_T$ is negligible. To estimate $I_0^{\eta_0}$, we use the fact that, with high probability, there are no eigenvalues in the interval $[0, \eta_0]$, as established in \cite[Theorem 3.2]{Tao2008SmoothAO}. The contribution of $I_{\eta_0}^{\eta_c}$ is argued to be small in Lemma \ref{Lemma:J_T^t_bound}. The main contributing factor in \eqref{eq:L_n(f)} is $I_{\eta_{c}}^{T},$ which constitutes the majority of this paper. Essentially, we need to analyze the limiting behaviors of $\langle (G^{(1)})^z(\iota\eta)-\mathbb{E} (G^{(1)})^z(\iota\eta)\rangle$, and $\langle (G^{(2)})^z(\iota\eta)-\mathbb{E} (G^{(2)})^z(\iota\eta)\rangle$ in the regime $[\eta_{c}, T].$ This is formulated in the following Theorem \ref{Thm:CLT for resolvents}.

\begin{Thm}[CLT for resolvents]\label{Thm:CLT for resolvents} Let $ G_i^{(t)}:=(G^{(t)})^{z_i}\left(\iota \eta_i\right),$ where $\eta_{i} > 0$ for $1\leq i\leq p.$ Let $z_{1}, z_{2}, \ldots, z_{p}\in \mathbb{C}$ be distinct complex numbers such that whenever $i\neq j,$ $\min\{\eta_i,\eta_j\}\geq n^{\epsilon-1}|z_i-z_j|^{-2}$ for some $\epsilon > 0.$ Then we have
    \begin{equation}\label{eq:CLT_.For_.Resolvent}
    \begin{aligned}
        &\mathbb{E}\Big[\prod_{i=1}^{p}c\langle G^{(1)}_i-\mathbb{E}(G^{(1)}_i)\rangle +d\langle G^{(2)}_i-\mathbb{E}(G^{(2)}_i)\rangle\Big]\\
        &=\sum_{\mathcal{P} \in \text {Parings}[p]} \prod_{\{i, j\} \in \mathcal{P}} \mathbb{E}\Big[\big\{c\langle G^{(1)}_i-\mathbb{E}(G^{(1)}_i)\rangle +d\langle G^{(2)}_i-\mathbb{E}(G^{(2)}_i)\rangle\big\}\\
        &\;\;\;\;\;\;\;\;\;\times\big\{c\langle G^{(1)}_j-\mathbb{E}(G^{(1)}_j)\rangle +d\langle G^{(2)}_j-\mathbb{E}(G^{(2)}_j)\rangle\big\}\Big] + \mathcal{O}_{\prec}(\Psi)\\
        &= \frac{1}{n^{p}}\sum_{\mathcal{P} \in \text {Parings}[p]} \prod_{\{i, j\} \in \mathcal{P}} \bigg\{\frac{c^2\widehat{V_{ij}^{(1)}}+d^2\widehat{V_{ij}^{(2)}} +8cd\gamma (L_{ji}^{12}+L_{ji}^{21})+2cd\rho  N_{ij} }{2}\\
        &\;\;\;\;\;+ \frac{c^2(\kappa_4)_{11}U_i^{(1)}U_j^{(1)}+d^2(\kappa_4)_{22}U_i^{(2)}U_j^{(2)}+cd(\kappa_4)_{12} (U_i^{(1)}U_j^{(2)}+U_j^{(1)}U_i^{(2)})}{2}\bigg\}\\
        &\;\;\;\;\;+\mathcal{O}_\prec(\Psi),
    \end{aligned}
\end{equation}
where
\begin{align}\label{eq:hat_V_{ij}^{(t)}}
  &\widehat{V_{ij}^{(t)}}\left(z_i, z_j, \eta_i, \eta_j\right)\\ &=
    \begin{cases} 
        V_{ij}^{(t)}\left(z_i, z_j, \eta_i, \eta_j\right), & \text{if $X^{(t)}$ is $\mathbb{C}$-valued}, \notag\\ 
        V_{ij}^{(t)}\left(z_i, z_j, \eta_i, \eta_j\right) + V_{ij}^{(t)}\left(z_i, \overline{z_j}, \eta_i, \eta_j\right), & \text{if $X^{(t)}$ is $\mathbb{R}$-valued},
    \end{cases}\\ 
  & \text{ and } V_{ij}^{(t)}\left(z_i,z_j,\eta_i,\eta_j\right)\\ &=\frac{1}{2} \partial_{\eta_i} \partial_{\eta_j} \log \Big[1+\big(u_i^{(t)} u_j^{(t)}|z_i||z_j|\big)^2-(m_i^{(t)})^2 (m_j^{(t)})^2-2 u_i^{(t)} u_j^{(t)} \Re z_i \overline{z_j}\Big]\notag,
\end{align}
\begin{align}\label{eq:Psi_error_term}
    &\Psi\\
    =&\bigg(\frac{1}{(n\eta_*)^2}+\frac{1}{n^2 \eta_*|z_i-z_j|^4} + \frac{1}{(n\eta_*)^3 |z_i-z_j|^4}+\frac{1}{\Im{z_i}(n\eta_*)^{3/2}} \bigg) \prod_{i\in [p]}\frac{(n\eta_i)^{-1/2}}{|1-|z_i||},\notag
\end{align}
where $\eta_{*}:=\displaystyle\min_i\{\eta_{i}\}.$
The other notations are same as defined in \eqref{notation: limit notations}. 
\end{Thm}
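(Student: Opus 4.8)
\section*{Proof proposal for Theorem \ref{Thm:CLT for resolvents}}

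The plan is to prove the moment factorization \eqref{eq:CLT_.For_.Resolvent} by a cumulant expansion combined with an induction on $p$, following the strategy of Erd\H{o}s et al.\ \cite{cipolloni2023central, cipolloni2021fluctuation} but keeping track of the cross terms between $X^{(1)}$ and $X^{(2)}$ throughout. First I would set up the self-consistent (deterministic) approximation: by the local law for $(H^{(t)})^{z}$ one has $(G^{(t)})^{z}(\iota\eta)\approx M^{(t)}$ where $M^{(t)}$ is built from $m^{(t)}_i$ and $u^{(t)}_i$ solving the matrix Dyson equation \eqref{eq:MDS}, with error $\mathcal{O}_\prec\big((n\eta)^{-1}\big)$ in the relevant averaged sense. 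Writing $\mathring{G}^{(t)}_i := \langle G^{(t)}_i - \mathbb{E} G^{(t)}_i\rangle$ and $\mathcal{G}_i := c\mathring{G}^{(1)}_i + d\mathring{G}^{(2)}_i$, the goal is to show $\mathbb{E}\prod_{i=1}^p \mathcal{G}_i$ equals the sum over pairings of products of $\mathbb{E}[\mathcal{G}_i\mathcal{G}_j]$ up to $\mathcal{O}_\prec(\Psi)$, and then to identify $\mathbb{E}[\mathcal{G}_i\mathcal{G}_j]$ with the explicit kernel in the statement.

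The core of the argument is a self-consistent equation for the $p$-th moment. I would start from the identity $\mathring G^{(t)}_i = -\langle (G^{(t)}_i - M^{(t)}_i) \, \underline{W^{(t)}}\, \cdots\rangle$ type expansion, i.e.\ use the resolvent identity $G = M - M W G + M(\text{self-energy correction})$ to express $\langle \mathring G^{(t)}_i \rangle$ in terms of one factor of the centered matrix entries $W^{(t)} := (H^{(t)})^{z} - \mathbb{E}$, and then apply the cumulant expansion (integration by parts) formula for $\mathbb{E}[W^{(t)}_{ab} F(W)]$ to $\mathbb{E}\big[\mathring G^{(t)}_i \prod_{k\neq i}\mathcal{G}_k\big]$. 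The second-order cumulant terms regenerate the left-hand side with a multiplicative ``stability'' factor $\beta^{(t)}_i = 1-(m^{(t)}_i)^2-(u^{(t)}_i)^2|z_i|^2$, produce the two-body contractions with every other index (these, after using the local law to replace $G$ by $M$ in the stable directions, give the $\widehat{V^{(t)}_{ij}}$ terms and, crucially for us, because $X^{(1)}$ and $X^{(2)}$ share the correlation $\gamma,\rho$ from \eqref{eq:cov}, the mixed terms $L^{12}_{ji}, L^{21}_{ji}, N_{ij}$), and generate higher moments that are handled inductively. The third- and fourth-order cumulant terms contribute the $(\kappa_4)_{ts}U^{(t)}_iU^{(s)}_j$ pieces (third-order cumulants vanish by the centering/parity structure of $(H^{(t)})^z$ after averaging), and all remaining cumulant terms of order $\geq 5$ together with the off-diagonal resolvent replacements are absorbed into $\Psi$. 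Inverting the stability operator (dividing by $\beta^{(t)}_i$, and handling the $2\times2$ coupling matrix in $(c,d)$ coming from the correlated entries) then yields the recursion $\mathbb{E}\prod_i\mathcal{G}_i = \sum_{j\neq i}\mathbb{E}[\mathcal{G}_i\mathcal{G}_j]\,\mathbb{E}\prod_{k\neq i,j}\mathcal{G}_k + (\text{higher order}) + \mathcal{O}_\prec(\Psi)$, which is exactly the Wick/pairing structure; solving it gives \eqref{eq:CLT_.For_.Resolvent}. The explicit two-point function is obtained by a direct (deterministic) computation: differentiating the logarithm in $\widehat{V^{(t)}_{ij}}$ in $\eta_i,\eta_j$ reproduces the contraction of $M^{(t)}_i$ and $M^{(t)}_j$ in the non-Hermitian block structure, and the cross two-point function is computed the same way using the joint law of $(\chi^{(1)},\chi^{(2)})$.

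The main obstacle I anticipate is the bookkeeping of the \emph{product of resolvents of the two correlated matrices}: when the cumulant expansion hits an entry $x^{(1)}_{ab}$, integration by parts must also act on factors involving $G^{(2)}$ because $\mathbb{E}[x^{(1)}_{ab}\,\overline{x^{(2)}_{ab}}]=\gamma/n$ and $\mathbb{E}[x^{(1)}_{ab}\,x^{(2)}_{ab}]=\rho/n$ are nonzero; this forces one to control averaged products such as $\langle G^{(1)}_i \mathcal{E} G^{(2)}_j\rangle$ for deterministic $\mathcal{E}$ (done in Section \ref{sec:Reductions}), and to verify that the corresponding two-body kernel has the claimed form with the stated error $\Psi$, which has the delicate $|z_i-z_j|^{-4}$ singularities reflecting the proximity condition $\min\{\eta_i,\eta_j\}\geq n^{\epsilon-1}|z_i-z_j|^{-2}$. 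A secondary difficulty is tracking how the error $\Psi$ propagates through the induction: each step multiplies by factors of order $(n\eta_i)^{-1/2}|1-|z_i||^{-1}$, so one must check that the stated $\Psi$ is stable under the recursion and that no logarithmic losses accumulate, using the improved (fluctuation-averaging type) bounds on $\langle \mathring G^{(t)}\rangle$ rather than the naive entrywise local law. The real and complex cases are separated at the level of which second-order cumulants survive, producing the two branches of $\widehat{V^{(t)}_{ij}}$.
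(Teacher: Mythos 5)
Your proposal follows essentially the same route as the paper's Section~\ref{sec: clt for resolvents}: express $\langle G^{(t)}_i-M^{(t)}_i\rangle$ via a single factor of $\underline{W^{(t)}G^{(t)}_i}$ tested against $A^{(t)}_i$ (which carries the inverse one-body stability operator), cumulant-expand one factor at a time keeping the cross-cumulants $\gamma,\rho,(\kappa_4)_{12}$ from Condition~\ref{Condtion::}, identify the two-body contractions via the product local laws of Section~\ref{sec:Reductions}, and close the Wick recursion by induction on $p$. The one small imprecision is the parenthetical that third-order cumulant contributions ``vanish by parity'' --- in the paper these terms do not vanish identically but are shown to be subleading because the block structure of $W^{(t)}$ forces an odd-length product of $\Delta^{ba}G$ factors to contain at least one off-diagonal resolvent entry, which is then bounded by the local law (see Remark~\ref{rem:leading_contri_discussion}); likewise no $2\times 2$ coupling in $(c,d)$ actually needs inverting, since $m^{(1)}$ and $m^{(2)}$ solve the same MDE so the linear combination passes straight through the stability operator.
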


We would like to conclude this section by noting down the following consequences.
\begin{enumerate}[(A)]
    \item Recall from \eqref{eq:L_n(f)_First} that $\mathcal{L}_n(f) = c \mathcal{L}^{(1)}_n(f)+ d \mathcal{L}^{(2)}_n(f).$ Since Theorem \ref{Thm:CLT for LES} is true for all $c, d,$ it implies that $(\mathcal{L}^{(1)}_n(f), \mathcal{L}^{(2)}_n(f))$ converges to a bivariate Gaussian distribution.
    
    \item If the random matrices defined in the Condition \ref{Condtion::} are independent of each other, then $\gamma = 0 = \rho.$ Moreover, the joint cumulants $(\kappa_{4})_{12}=0.$ As a result the Theorem \ref{Thm:CLT for resolvents} becomes equivalent to \cite[Proposition 3.3]{cipolloni2023central} when the matrices are complex valued, and \cite[Proposition 3.3]{cipolloni2021fluctuation} when the matrices are real valued.

    \item If we begin with a $2n\times 2n$ random centrosymmetric matrix $X$, then $X$ can be represented as \eqref{eqn: main definition of block matrix X}, where the matrices $X^{(1)}$ and $X^{(2)}$ are $n\times n$ random matrices with i.i.d. entries. Moreover, there exist independent random variables $\zeta$ and $\theta$ such that the entries of $X^{(1)}$ have the same distribution as $\zeta+\theta,$ and the entries of $X^{(2)}$ have the same distribution as $\zeta-\theta$ (see \cite[Theorem 9]{weaver1985centrosymmetric}).  This impels that the $(ab)$th entry of $X^{(1)}$ and $(ab)$th entry of $X^{(2)}$ are uncorrelated but may not be independent. In other words, according to the notations used in Condition \ref{Condtion::}, $\gamma = 0=\rho.$ But $(\kappa_{4})_{12}$ is not necessarily zero.

    However, if in addition, we assume that the test function $f$ is analytic over $\overline{\mathbf{D}},$ then in the expression of the covariance kernel $C(f, g)$ (see \eqref{eq:covariance kernel}), only the integration against $c^2\widehat{V_{ij}^{(1)}}+d^2\widehat{V_{ij}^{(2)}}$ will survive. Following the calculations outlined in \cite[Lemma 4.8]{cipolloni2023central}, we obtain the expression of the limiting variance as
    \begin{align*}
        C(f, f)=\frac{2}{\pi}\int_{\mathbf{D}}|f'(z)|^{2}\;\mathrm{d}^{2}z.
    \end{align*}
    The above expression was proved independently in a previous work \cite[Theorem 3.4]{jana2024spectrum}.

    \item The main results in this paper are stated in macroscopic scale. However, we believe that the techniques used in \cite{cipolloni2024mesoscopic} can be used to extend those in mesoscopic scale as well.
\end{enumerate}

\section{Primary reductions}\label{sec:Reductions}

Proof of the main theorem requires a careful analysis of the resolvent of the matrix $(H^{(t)})^{z}$, which is defined in \eqref{eq:(H)^z}. The asymptotic behavior of the resolvent $(G^{(t)})^{z}(\omega)$ is given by the following matrix
\begin{equation}\label{eq:M^z matrix}
    \begin{aligned}
        (M^{(t)})^z(\omega):&=\left(\begin{array}{cc}(m^{(t)})^z(\omega)\mathbb{I} & -z (u^{(t)})^z(\omega)\mathbb{I} \\ -\bar{z} (u^{(t)})^z(\omega)\mathbb{I} & (m^{(t)})^z(\omega)\mathbb{I}\end{array}\right),\\
     (u^{(t)})^z(\omega):&=\frac{(m^{(t)})^z(\omega)}{\omega+(m^{(t)})^z(\omega)},
    \end{aligned}
\end{equation}
where $\mathbb{I}$ is the $n\times n$ identity matrix, and $(m^{(t)})^{z}(\omega)$ is the unique solution of the following equation
\begin{equation}\label{eq:MDS}
    -\frac{1}{(m^{(t)})^z}=\omega+(m^{(t)})^z-\frac{|z|^2}{\omega+(m^{(t)})^z},\;\;\; \eta \Im (m^{(t)})^z(\omega)>0, \;\;\; \eta=\Im \omega\neq0.
\end{equation}
The above follows from the analysis of \textit{matrix Dyson equation} (MDE) (see \cite{ajanki2019stability}). The exact quantification of the error $(G^{(t)})^{z}(\omega) - (M^{(t)})^{z}(\omega)$ is  given by the local law, which is mentioned in Theorem \ref{Thm: Local_Law}.

As it is mentioned in the statement of Theorem \ref{Thm:CLT for LES}, the derivative of $(m^{(t)})^{z}(\omega)$ with respect to $\omega$ and $\Im{\omega}=\eta$ appears frequently in the rest of the article. We note down the expression below;
\begin{equation}\label{eq:m^t_derivative}
    \partial_\omega(m^{(t)})=\frac{1-\beta^{(t)}}{\beta^{(t)}}= -\iota \partial_\eta(m^{(t)}), \quad \beta^{(t)}:=1-(m^{(t)})^2-(u^{(t)})^2|z|^2.
\end{equation}

Along the same line, we define yet another quantity $$\beta_{*}^{(t)}=1-|m^{(t)}|^{2} - |u^{(t)}|^{2}||z|^{2},$$ which satisfies
\begin{align}
    &\beta^{(t)}_* \Im m^{(t)} = (1 - \beta^{(t)}_*) \Im \omega,\\
    &(|m^{(t)}|^{2}-|u^{(t)}|^{2}|z|^{2}+1)\Re m^{(t)}(\iota \eta) = 0.\label{eqn: real part of MDE}\\
    &(\beta^{(t)})_* = \frac{\eta}{\Im m^{(t)}+\eta}, \quad \beta^{(t)}=(\beta^{(t)})_*+2(\Im m^{(t)})^2.\label{eq:beta_*} 
\end{align}
The above identities are obtained by taking the imaginary and real part of \eqref{eq:MDS} on the both sides. Now, since $\Im\omega$ and $\Im m^{(t)}$ have the same sign (from \eqref{eq:MDS}), $\beta_{*}^{(t)}$ and $(1-\beta_{*}^{(t)})$ must also have the same sign. However, as per the definition, $1-\beta_{*}^{(t)} > 0.$ Consequently, $\beta_{*}^{(t)} > 0,$ which is equivalent to  
\begin{equation}\label{eq: beta star is non-negative}
    |m^{(t)}|^2+|(u^{(t)})^z|^2|z|^2<1,
\end{equation}
which implies $|(u^{(t)})^z|^2|z|^2<1$ as well. Using this fact in \eqref{eqn: real part of MDE}, we obtain
\begin{align*}
    \Re m^{(t)}(\iota \eta) = 0.
\end{align*}
Moreover, from \eqref{eq:MDS}, we have $u^{(t)}=-(m^{(t)})^2+(u^{(t)})^2|z|^2,$ which in conjunction with \eqref{eq: beta star is non-negative} implies that $|u^{(t)}| < 1.$ Therefore, we obtain the following uniform bound in $z$ and $\omega$
\begin{equation}\label{eq:3.3}
\|(M^{(t)})^z(\omega)\|+|(m^{(t)})^z(\omega)|+|(u^{(t)})^z(\omega)| \lesssim 1.
\end{equation}

Furthermore, from \cite[eq. (3.13)]{article_1} we have
\begin{equation}\label{eq:Im{m^t}}
     \Im m^{(t)}(\iota\eta) \sim\left\{\begin{array}{ll}\eta^{1 / 3}+|1-|z|^2|^{1/2} & \text { if }|z|\leq1  \\ \frac{\eta}{|z|^2-1+\eta^{2/3}} & \text { if }|z|>1\end{array} \quad \eta\lesssim 1\right..
\end{equation}

Now, using \eqref{eq: beta star is non-negative}, \eqref{eq:Im{m^t}}, we get a lower bound on $|\beta^{(t)}|$ as follows;
\begin{align}\label{eqn: lower bound on beta}
    |\beta^{(t)}|
    \geq&1-\Re((m^{(t)})^{2})-\Re((u^{(t)})^{2})|z|^{2}\notag\\
    =&1-(\Re(m^{(t)}))^{2}+(\Im(m^{(t)}))^{2}-\Re((u^{(t)})^{2})|z|^{2}\notag\\
    \geq&(1-|m^{(t)}|^{2}-|u^{(t)}|^{2}|z|^{2})+(\Im(m^{(t)}))^{2}\notag\\
    \geq &(\Im(m^{(t)}))^{2}\notag\\
    \gtrsim &\eta^{2/3}+|1-|z|^{2}|\notag\\
    \geq &\eta^{2/3}+|1-|z||.
\end{align}
We conclude this subsection by stating the local law of the resolvent.

\begin{Thm}[Optimal local law for $G^{(t)}$,\cite{10.1214/21-EJP591}, Theorem 3.1]\label{Thm: Local_Law}
For any $\epsilon >0 $ and $z \in \mathbb{C}$ with $|1-|z|| \geq \epsilon$ the resolvent $(G^{(t)})^z$ at $\omega \in \mathbf{H}$ with $\eta = \Im{\omega}$ is very well approximated by the deterministic matrix $(M^{(t)})^z$ in the sense
    $$
   \begin{gathered}
    |\langle\big((G^{(t)})^z(\omega)-(M^{(t)})^z(\omega)\big) A\rangle| \leq \frac{C_{\epsilon}\|A\| n^{\xi}}{n \eta} \\
    |\langle \boldsymbol{x},\left((G^{(t)})^z(\omega)-(M^{(t)})^z(\omega)\right)\boldsymbol{y}\rangle| \leq C_{\epsilon} \|\boldsymbol{x}\|\|\boldsymbol{y}\| n^{\xi}\left(\frac{1}{\sqrt{ n \eta}} + \frac{1}{n \eta}  \right),
   \end{gathered}
$$
with very high probability for some $C_\epsilon \leq \epsilon^{-100}$, uniformly for $\eta> n^{-100}$, and for any deterministic matrices $A$ and any vectors $\boldsymbol{x}, \boldsymbol{y} \text{ and }\xi > 0$.
\end{Thm}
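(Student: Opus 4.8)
The statement is the optimal local law for the Hermitized resolvent $(G^{(t)})^z(\omega)$, and since $X^{(1)}$ and $X^{(2)}$ are each individually i.i.d.\ matrices, the two cases $t=1,2$ are proved identically; I suppress the superscript and write $G=G^z(\omega)$, $M=M^z(\omega)$. The plan is the by-now standard self-consistent-equation scheme for the matrix Dyson equation, specialized to the regime $|1-|z||\geq\epsilon$, where the relevant spectral point $\omega=\iota\eta$ sits away from the edge of the symmetrized singular-value distribution of $X-z$ and the problem is uniformly stable.

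\textbf{Step 1 (MDE set-up).} The matrix $H^z$ is self-adjoint on $\mathbb{C}^{2n}$, so $G=(H^z-\omega)^{-1}$ is well defined for $\Im\omega>0$ with the trivial bound $\|G\|\leq 1/\eta$. Split $H^z=W+A^z$, where $W$ is the chiral noise built from $X$ and $A^z$ is the deterministic off-diagonal shift encoding $z$. The deterministic approximant $M$ of \eqref{eq:M^z matrix} is the unique solution in the relevant half-plane of the matrix Dyson equation $-M^{-1}=\omega-A^z+\mathcal{S}[M]$, where the self-energy operator $\mathcal{S}$ sends a block matrix with diagonal blocks $R_1,R_2$ to the block-diagonal matrix with blocks $\langle R_2\rangle\mathbb{I}$ and $\langle R_1\rangle\mathbb{I}$, reflecting the i.i.d., variance-$1/n$ structure. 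That $M$ has the block form with scalar entries $m,u$ solving \eqref{eq:MDS} is exactly the reduction already recorded in Section~\ref{sec:Reductions}, and the uniform bound \eqref{eq:3.3} is available.

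\textbf{Step 2 (stability away from the edge).} The linear stability operator of the MDE is $\mathcal{B}=1-M\,\mathcal{S}[\,\cdot\,]\,M$, acting on $2n\times2n$ matrices. For $|1-|z||\geq\epsilon$ the scalar estimates already derived give quantitative control: $|\beta^{(t)}|\gtrsim\eta^{2/3}+|1-|z||$ from \eqref{eqn: lower bound on beta}, together with the lower bound on $\Im m^{(t)}$ from \eqref{eq:Im{m^t}}. Inserting these into the explicit representation of $\mathcal{B}^{-1}$ on the low-dimensional ``unstable'' subspace (spanned by the block structure of $M$) plus the trivial inversion on its complement yields $\|\mathcal{B}^{-1}\|\leq C_\epsilon$ uniformly in $\eta\in(0,\infty)$, with $C_\epsilon\lesssim\epsilon^{-100}$. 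This is the one place where the hypothesis $|1-|z||\geq\epsilon$ is essential: at $|z|=1$ the point $\omega=0$ is the spectral edge, $\beta^{(t)}$ degenerates, and $\mathcal{B}^{-1}$ blows up.

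\textbf{Step 3 (random defect, bootstrap, and the main obstacle).} Define the defect $\mathcal{D}:=\mathbb{I}+(\omega-A^z+\mathcal{S}[G])G$. A resolvent/Schur-complement expansion gives the approximate self-consistent equation $G-M=M\,\mathcal{B}^{-1}[\mathcal{D}]+(\text{terms quadratic in }G-M)$, so by Step~2 both $|\langle(G-M)A\rangle|$ and $|\langle\boldsymbol{x},(G-M)\boldsymbol{y}\rangle|$ are controlled by the corresponding norms of $\mathcal{D}$, provided a weak a priori bound $\|G-M\|\leq\epsilon_0$ is in force. One then estimates $\mathcal{D}$ by a high-order cumulant (or minor-removal) expansion: the isotropic defect satisfies $|\langle\boldsymbol{x},\mathcal{D}\boldsymbol{y}\rangle|\prec(n\eta)^{-1/2}$, while the averaged defect $|\langle\mathcal{D}A\rangle|$ enjoys an extra ``fluctuation-averaging'' gain and is $\prec(n\eta)^{-1}$; substituting back closes the estimate at the two stated rates. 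Finally a continuity argument in $\eta$ — starting at $\eta$ of order one, where $\|G-M\|\leq\epsilon_0$ is elementary, and decreasing $\eta$ along a fine grid down to $n^{-100}$, using $\|\partial_\eta G\|\leq\eta^{-2}$ for the interpolation — removes the a priori assumption and upgrades everything to very high probability. The genuine obstacle is precisely the fluctuation-averaging estimate upgrading $\langle\mathcal{D}A\rangle$ from $(n\eta)^{-1/2}$ to $(n\eta)^{-1}$: it requires carefully tracking which terms of the cumulant expansion of $\langle(WG)B\rangle$-type averages carry an additional averaging decay, and iterating the expansion until the remainder is negligible, using only the a priori control of $G$ and the deterministic bounds on $M$. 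Everything else — the $2\times2$ stability computation, the block reduction, and the bootstrap — is routine given the scalar estimates assembled in Section~\ref{sec:Reductions} and the general MDE theory of \cite{ajanki2019stability}.
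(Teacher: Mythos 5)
The paper does not prove Theorem \ref{Thm: Local_Law} at all: it is imported verbatim, with attribution, from \cite{10.1214/21-EJP591} (Theorem 3.1 there) and is used in the present work purely as an external input, so there is no internal proof to compare your argument against. Your outline does follow the same strategy as the cited source and its antecedents (e.g.\ \cite{article_1}): Hermitization, the matrix Dyson equation \eqref{eq:MDS}--\eqref{eq:M^z matrix} with self-energy \eqref{eq:S[.]}, stability of $\mathcal{B}=\mathbb{I}-M\mathcal{S}[\cdot]M$ away from the unit circle via $|\beta^{(t)}|\gtrsim |1-|z||+\eta^{2/3}$ as in \eqref{eqn: lower bound on beta}, control of the self-renormalized defect $\underline{W^{(t)}G^{(t)}}$ by cumulant expansions, and a bootstrap in $\eta$ down to $n^{-100}$. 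However, as written it is a road map rather than a proof: the two estimates that carry essentially all of the difficulty — the isotropic defect bound at scale $(n\eta)^{-1/2}$ and, above all, the fluctuation-averaging upgrade of the averaged defect to $(n\eta)^{-1}$, which require uniform control of arbitrarily high moments of the renormalized error (including the non-Gaussian cumulants of the entries) throughout the bootstrap — are asserted, not derived, and you acknowledge this yourself in Step 3. So your proposal correctly identifies the route taken in the literature but cannot stand alone as a proof of the stated local law; within this paper the appropriate treatment is exactly the citation it already gives.
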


This local law will be used in various estimates. The main idea is to write $G^{(t)} = M^{(t)} + G^{(t)} - M^{(t)}.$ Then, any particular entry of $G^{(t)}$ can be written as $G_{ab}^{(t)} = M_{ab}^{(t)} + (G^{(t)} - M^{(t)})_{ab}.$ Now, the estimate of $(G^{(t)}-M^{(t)})_{ab}$ follows from the second part of the local law by taking $\boldsymbol{x} = \boldsymbol{e}_{a}$ and $\boldsymbol{y}=\boldsymbol{e}_{b}.$ Moreover, since the matrix $M^{(t)}$ is mostly a diagonal matrix, $M^{(t)}_{ab}=0$ for most of the off-diagonal $M^{(t)}_{ab}.$ A combination of these two facts yield the required estimates. Additionally, when we have $(G^{(t)}A)_{ab},$ we decompose $G^{(t)}$ as before $G^{(t)} = M^{(t)} + G^{(t)}-M^{(t)}.$ Then, we apply the first part of the local law to estimate $((G^{(t)}-M^{(t)})A)_{ab}.$ This idea is being repeatedly used in the estimates such as \eqref{eq:exp_local_law},\eqref{eq:GaGAbbGAab}, \eqref{eq:GaaGAbbGAWGab} etc..

\subsection{Local law for product of resolvents}

We begin with the shorthand notations
\begin{equation}\label{eqn: M_i notation}
    G_i^{(t)}:=(G^{(t)})^{z_i} (\omega_i),\;\;M_i^{(t)}:=(M^{(t)})^{z_i} (\omega_i).
\end{equation}
From \cite{article_1}, we note down the deviation of $G^{(t)}$ from $M^{(t)}$ in the equation \eqref{eq:deviation}, which requires the following notions
\begin{align*}
    &W^{(t)}:=\left(\begin{array}{ll}0 & X^{(t)} \\ (X^{(t)})^* & 0\end{array}\right).
\end{align*}
The linear covariance operator $\mathcal{S}:\mathbb{C}^{2 n \times 2 n} \rightarrow \mathbb{C}^{2 n \times 2 n}$ is defined by
\begin{align}\label{eq:S[.]}
    &\mathcal{S}\left[\left(\begin{array}{ll}A & B \\ C & D\end{array}\right)\right]:=\widetilde{\mathbb{E}}^{(t)}\widetilde{W}^{(t)}\left(\begin{array}{ll}A & B \\ C & D\end{array}\right) \widetilde{W}^{(t)}
    =\left(\begin{array}{cc}\langle D \rangle & 0 \\ 0 & \langle A \rangle\end{array}\right), \\
    \text{where }&\widetilde{W}^{(t)}=\left(\begin{array}{cc}0 & \widetilde{X}^{(t)} \\ (\widetilde{X}^*)^{(t)} & 0\end{array}\right),\notag
\end{align}
and $\widetilde{X}^{(t)} \sim$ Gin$_\mathbb{C}$. Here Gin$_\mathbb{C}$ stands for the standard complex Ginibre ensemble, $\widetilde{\mathbb{E}}^{(t)}$ denotes the expectation with respect to $\widetilde{X}^{(t)}.$

For any given function $f: \mathbb{C}^{2 n \times 2 n} \rightarrow \mathbb{C}^{2 n \times 2 n}$, we define the self renormalization as follows;
\begin{equation}\label{eq:underline}
    \underline{W^{(t)} f(W^{(t)})}:=W^{(t)} f(W^{(t)})-\widetilde{\mathbb{E}}^{(t)} \widetilde{W}^{(t)} \left(\partial_{\widetilde{W}^{(t)}} f\right)(W^{(t)}),
\end{equation}
where $\partial_{\widetilde{W}^{(t)}}$ indicates a directional derivative in the direction $\widetilde{W}^{(t)}$ and $\widetilde{W}^{(t)}$ denotes an independent random matrix as in \eqref{eq:S[.]} with $\widetilde{X}^{(t)}$ a complex Ginibre matrix with expectation $\widetilde{\mathbb{E}}^{(t)}$.

\subsubsection{Approximation of $G_{1}^{(t)}BG_{2}^{(t)}$ via stability operators}
According to the definition above, we have
\begin{align*}
\underline{W^{(t)}G^{(t)}}&=W^{(t)}G^{(t)}+\mathcal{S}[G^{(t)}]G^{(t)}.
\end{align*}

Now, using the above equation, \eqref{eq:MDS}, \eqref{eq:M^z matrix}, along with the observation $\mathcal{S}[M_{i}^{(t)}]=m_{i}^{(t)}\mathbb{I},$ we get the following identity
\begin{align}\label{eq:deviation}
    G_i^{(t)}&=M_i^{(t)}-M_i^{(t)}\underline{W^{(t)}G_i^{(t)}}+M_i\mathcal{S}[G_i^{(t)}-M_i^{(t)}]G_i^{(t)}.
\end{align}

Similarly, we can calculate the deviation of $G_{1}^{(t)}BG_{2}^{(s)}$ from $M_{1}^{(t)}BM_{2}^{(s)}$ for any deterministic matrix $B$ with bounded norm as follows;
\begin{align}\label{eq:G_1^{(t)} B G_2^{(s)}}
    &G_1^{(t)} B G_2^{(s)}\\
        =&\big(M_1^{(t)}-M_1^{(t)}\underline{W^{(t)}G_1^{(t)}}+M_1\mathcal{S}[G_1^{(t)}-M_1^{(t)}]G_1^{(t)}\big)BG_2^{(s)}\notag\\
        = & M_1^{(t)} B G_2^{(s)}-M_1^{(t)} \underline{W^{(t)}G_1^{(t)}} B G_2^{(s)}+M_1^{(t)} \mathcal{S}[G_1^{(t)}-M_1^{(t)}] G_1^{(t)} B G_2^{(s)} +M_1^{(t)}B M_2^{(s)}-M_1^{(t)}B M_2^{(s)}\notag\\
        =& M_1^{(t)} B M_2^{(s)}-M_1^{(t)} \underline{W^{(t)}G_1^{(t)} B G_2^{(s)}}+M_1^{(t)} \mathcal{S}[G_1^{(t)}-M_1^{(t)}] G_1^{(t)} B G_2^{(s)}+M_1^{(t)} B (G_2^{(s)}-M_2^{(s)})\notag.
\end{align}

Here in the last equality, we used
\begin{align}
    M_1^{(t)} \underline{W^{(t)}G_1^{(t)}} B G_2^{(s)}
    =&M_1^{(t)} W^{(t)}G_1^{(t)} B G_2^{(s)}-M_1^{(t)} \widetilde{\mathbb{E}^{(t)}} \widetilde{W^{(t)}}\partial_{\widetilde{W^{(t)}}}(G_1^{(t)})BG_2^{(s)}\\
        =&M_1^{(t)} W^{(t)}G_1^{(t)} B G_2^{(s)}-M_1^{(t)} \widetilde{\mathbb{E}^{(t)}} \widetilde{W^{(t)}}\partial_{\widetilde{W^{(t)}}}(G_1^{(t)}BG_2^{(s)})\notag\\
        =&M_1^{(t)}\underline{ W^{(t)}G_1^{(t)} B G_2^{(s)}}\notag.
\end{align}
It can be shown that the terms $G_{i}^{(t)}-M_{i}^{(t)}$ and the self renormalized terms are small, which is formalized in Theorem \ref{Thm: Local law combined}. As a result, we obtain the following approximation 
\begin{equation}\label{eq:approx}
    G_1^{(t)}B G_2^{(s)}\approx M_1^{(t)}B M_2^{(s)}.
\end{equation}

Similarly, for $t=s$ we have
\begin{align*}
    G_1^{(t)}B G_2^{(t)}\approx M_1^{(t)}B M_2^{(t)}=:(M^{(t)}_B)^{z_1,z_2}\left(\omega_1, \omega_2\right),
\end{align*}
which can also be expressed as 
\begin{align}\label{eq:M_B^t}
    &(M^{(t)}_B)^{z_1,z_2}\left(\omega_1, \omega_2\right)\notag\\
    &:=\left(\mathbb{I}-(M^{(t)})^{z_1}\left(\omega_1\right) \mathcal{S}\left[\cdot\right] (M^{(t)})^{z_2}(\omega_2)\right)^{-1}\left[(M^{(t)})^{z_1}\left(\omega_1\right) B (M^{(t)})^{z_2}(\omega_2)\right].
\end{align}

In this context, we define the 2-body stability operator $\hat{\mathcal{B}}^{(t)}_{12}\left(z_1, z_2, \omega_1, \omega_2\right):GL_{2n}(\mathbb{C})\to GL_{2n}(\mathbb{C})$ as follows;
 \begin{equation}\label{def:2-body stability operator}
    \hat{\mathcal{B}}^{(t)}_{12}\left(z_1, z_2, \omega_1, \omega_2\right):=\mathbb{I}-M_1^{(t)} \mathcal{S}[\cdot] M_2^{(t)}.
 \end{equation}
 Here, $GL_{2n}(\mathbb{C})$ denotes the space of $2n\times 2n$ matrices with the usual matrix norm. In the rest of the article, for convenience, we denote $\hat{\mathcal{B}}^{(t)}_{12}\left(z_1, z_2, \omega_1, \omega_2\right)$ as $\hat{\mathcal{B}}^{(t)}_{12}$ or $\hat{\mathcal{B}}^{(t)}.$

\begin{Thm}\cite[Theorem 3.5]{10.1214/21-EJP591}\label{Thm: Local law combined}
    Fix $z_1, z_2 \in \mathbb{C}$ with $|1-|z_i||\geq \epsilon,$ for some $\epsilon >0$ and $\omega_1, \omega_2 \in \mathbb{C}$ with $|\eta_i|:=|\Im w_i| \geq n^{-1}$ such that
$$
\eta_{*}:=\min \left\{|\eta_1|,|\eta_2|\right\} \geq n^{-1+\epsilon_{*}} |(\hat{\beta}^{(t)}_*)^{-1}|
$$
for some $\epsilon_*>0$, where $|\hat{\beta}^{(t)}_{*}|$ is the smallest eigen value of $\hat{\mathcal{B}}_{12}^{(t)}.$
Then, for any bounded deterministic matrix $B$, with $\|B\| \lesssim 1$, the product of resolvents
$$
(G^{(t)})^{z_1} B (G^{(t)})^{z_2}=(G^{(t)})^{z_1}(\omega_1) B (G^{(t)})^{z_2}(\omega_2)
$$
is approximated by $(M^{(t)})_B^{z_1, z_2}=(M^{(t)})_B^{z_1, z_2}(\omega_1, \omega_2)$ defined in \eqref{eq:M_B^t} in the sense that

\begin{align}
& |\langle A((G^{(t)})^{z_1} B (G^{(t)})^{z_2}-(M^{(t)})_B^{z_1, z_2})\rangle|\notag \\
& \lesssim  \frac{C_{\epsilon} \|A\|n^{\xi}}{n \eta_*(\eta_1 \eta_2)^{1 / 2} |\hat{\beta}^{(t)}_*|}\Bigg((\eta_*)^{1/12}+\frac{\eta_*^{1 / 4}}{|\hat{\beta}^{(t)}_*|}+\frac{1}{\sqrt{n \eta_*}}+\frac{1}{\big(|\hat{\beta}^{(t)}_*| n \eta_*\big)^{1 / 4}}\Bigg),\\
& \quad\left|\left\langle \boldsymbol{x},\left((G^{(t)})^{z_1} B (G^{(t)})^{z_2}-(M^{(t)})_B^{z_1, z_2}\right) \boldsymbol{y}\right\rangle\right| \lesssim \frac{C_{\epsilon}\|\boldsymbol{x}\|\|\boldsymbol{y}\|n^{\xi}}{\left(n \eta_*\right)^{1 / 2}(\eta_1 \eta_2)^{1 / 2}|\hat{\beta}^{(t)}_*|}
\end{align}

for some $C_\epsilon$ with very high probability for any deterministic $A, \boldsymbol{x}, \boldsymbol{y}$, and $\epsilon \geq 0$. If $\omega_1,\omega_2 \in i\mathbb{R}$ we may choose $C_{\epsilon}=1,$ otherwise we can choose $C_\epsilon \leq \epsilon^{-100}.$
\end{Thm}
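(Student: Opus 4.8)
This statement concerns a single matrix $X^{(t)}$ and makes no use of the correlation between $X^{(1)}$ and $X^{(2)}$, so the plan is to reproduce the argument of \cite[Theorem 3.5]{10.1214/21-EJP591}; I sketch its main steps here. Write $G_i := G_i^{(t)}$, $M_i := M_i^{(t)}$, and $\mathcal{E} := G_1 B G_2 - (M^{(t)})_B^{z_1,z_2}(\omega_1,\omega_2)$. First I would start from the two-resolvent analogue of the deviation identity \eqref{eq:deviation} (its $t=s$ case for $G_1^{(t)} B G_2^{(s)}$), substitute \eqref{eq:deviation} once more for the trailing factor $G_2^{(t)} - M_2^{(t)}$, and use the defining relation $\hat{\mathcal{B}}_{12}^{(t)}\big[(M^{(t)})_B^{z_1,z_2}\big] = M_1 B M_2$ that follows from \eqref{eq:M_B^t}--\eqref{def:2-body stability operator}. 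Reorganizing, one finds that $\mathcal{E}$ solves a self-consistent equation in which the $2$-body stability operator \eqref{def:2-body stability operator} acts on the left,
\begin{equation*}
  \hat{\mathcal{B}}_{12}^{(t)}[\mathcal{E}] = \mathcal{D} + \mathcal{Q},
\end{equation*}
where $\mathcal{D}$ collects the genuinely fluctuating pieces (the renormalized $W^{(t)} G_1 B G_2$ term together with the parts linear in $G_i - M_i$) and $\mathcal{Q}$ is a remainder that is quadratically small in the a priori sizes of $\mathcal{E}$ and of the $G_i - M_i$.

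The next step is to invert the $2$-body stability operator. Using the explicit block structure of $M_i^{(t)}$ in \eqref{eq:M^z matrix} and of $\mathcal{S}$ in \eqref{eq:S[.]}, one establishes $\big\|(\hat{\mathcal{B}}_{12}^{(t)})^{-1}\big\| \lesssim |\hat{\beta}^{(t)}_*|^{-1}$ on the relevant subspaces; together with the standing hypothesis $\eta_* \geq n^{-1+\epsilon_*}|(\hat{\beta}^{(t)}_*)^{-1}|$ this makes the inversion stable. Applying $(\hat{\mathcal{B}}_{12}^{(t)})^{-1}$ and then pairing $\mathcal{E}$ against either an averaged observable $\langle A\,\cdot\,\rangle$ or an isotropic observable $\langle \boldsymbol{x},\,\cdot\,\boldsymbol{y}\rangle$ --- the averaged one picking up essentially an extra factor $(n\eta_*)^{-1/2}$ from the additional internal average --- reduces the problem to bounding $\mathcal{D}$ in those two norms.

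For the estimate of $\mathcal{D}$ I would carry out a cumulant (equivalently high-moment) expansion in the i.i.d.\ entries of $W^{(t)}$: every contraction either produces a factor $\mathcal{S}[\cdot]$ that is reabsorbed into $\hat{\mathcal{B}}_{12}^{(t)}$, or spawns an additional resolvent that is controlled by the single-resolvent local law Theorem \ref{Thm: Local_Law}, the deterministic bounds $\|G_i\| \le \eta_i^{-1}$ and \eqref{eq:3.3}, and the running a priori bound on $G_1 B G_2$; a high-order Markov inequality then upgrades the moment estimates to the $\mathcal{O}_\prec$-statement. Because $\mathcal{D}$ still contains $G_1 B G_2$, the argument is closed by a bootstrap: one starts from the crude bound $\|\mathcal{E}\| \lesssim (\eta_1\eta_2)^{-1/2}$, feeds it through the expansion and through the inversion of $\hat{\mathcal{B}}_{12}^{(t)}$, and iterates; the smallness supplied by $n\eta_*$ and by the threshold on $\eta_*$ makes the iteration a contraction, whose fixed point is exactly the bound asserted in the theorem. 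When $\omega_1,\omega_2 \in i\mathbb{R}$ the matrices $M_i^{(t)}$ and the stability constants can be tracked with $\epsilon$-independent bounds, yielding $C_\epsilon = 1$, whereas away from the imaginary axis one only gets $C_\epsilon \le \epsilon^{-100}$.

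The hard part will be the estimate of the fluctuation term $\mathcal{D}$: the cumulant expansion must be run while keeping exact track of every power of $n\eta_*$, of $|\hat{\beta}^{(t)}_*|^{-1}$, and of the near-coincidence singularity $|z_1 - z_2|^{-1}$ hidden inside $\hat{\beta}^{(t)}_*$, and one must verify that no resulting term exceeds the claimed bound. The second delicate ingredient is the spectral analysis of $\hat{\mathcal{B}}_{12}^{(t)}$ itself --- identifying $\hat{\beta}^{(t)}_*$, proving the $|z_1 - z_2|^{-2}$-type resolvent bounds for its inverse, and handling the regime where $\hat{\beta}^{(t)}_*$ degenerates (near $|z_i| = 1$, excluded here, or near $z_1 = z_2$). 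Both are carried out in full in \cite{10.1214/21-EJP591}.
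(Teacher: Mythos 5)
The paper does not prove Theorem \ref{Thm: Local law combined} at all --- it is imported verbatim, with a citation to \cite[Theorem 3.5]{10.1214/21-EJP591}, and no argument is given in the text. So there is no in-paper proof to compare against; the honest reference point is the cited Cipolloni--Erd\H{o}s--Schr\"oder paper. Your sketch --- derive a self-consistent equation $\hat{\mathcal{B}}_{12}^{(t)}[\mathcal{E}] = \mathcal{D} + \mathcal{Q}$ via \eqref{eq:G_1^{(t)} B G_2^{(s)}} and $\hat{\mathcal{B}}_{12}^{(t)}[(M^{(t)})_B^{z_1,z_2}] = M_1 B M_2$, invert the two-body stability operator using $\|(\hat{\mathcal{B}}_{12}^{(t)})^{-1}\| \lesssim |\hat{\beta}^{(t)}_*|^{-1}$ and the threshold $\eta_* \ge n^{-1+\epsilon_*}|(\hat{\beta}^{(t)}_*)^{-1}|$, bound the fluctuation $\mathcal{D}$ by cumulant expansion with the single-resolvent local law as input, and close by bootstrap --- is a faithful high-level summary of how the cited result is actually proved, and you correctly observe that the statement is a single-matrix assertion that does not touch the inter-matrix correlation structure, so there is no new content for this paper to supply. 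The only caveat is that your proposal is, by design, a sketch and does not discharge the genuinely hard bookkeeping (the precise powers of $n\eta_*$, $|\hat{\beta}^{(t)}_*|^{-1}$, and $|z_1-z_2|^{-1}$ that produce the four summands inside the parentheses of the averaged bound), but you flag this explicitly and point back to the source, which is the appropriate level of rigor for a result the paper itself treats as a black box.
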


In continuation with the two-body stability operator, we also note down the one-body stability operator as follows;
\begin{align}\label{eqn: one body stability definition}
    \mathcal{B}^{(t)} := \mathbb{I} - M^{(t)}\mathcal{S}[\cdot]M^{(t)}.
\end{align}
Recalling the action of $\mathcal{S}$ from \eqref{eq:S[.]}, we see that $\mathcal{B}^{(t)}$ acting on a block matrix $\left(\begin{array}{cc}
   a\mathbb{I}  &  b\mathbb{I}\\
   c\mathbb{I}  & d\mathbb{I}
\end{array}\right)$ is given by
\begin{align*}
    \mathcal{B}^{(t)}\left(\begin{array}{cc}
   a\mathbb{I}  &  b\mathbb{I}\\
   c\mathbb{I}  & d\mathbb{I}
\end{array}\right) 
= \left(\begin{array}{cc}
   (1-d(m^{(t)})^{2}-a|z|^{2}(u^{(t)})^{2})\mathbb{I}  &  (d+a)zu^{(t)}m^{(t)}\mathbb{I}\\
    (d+a)\bar{z}u^{(t)}m^{(t)}\mathbb{I} & (1-d|z|^{2}u^{(t)}-a(m^{(t)})^{2})\mathbb{I}
\end{array}
\right).
\end{align*}
From the above expression, we notice that any eigenvector of $\mathcal{B}^{(t)}$ will be of the form $\left(\begin{array}{cc}
   a\mathbb{I}  &  0\\
   0  & d\mathbb{I}
\end{array}\right).$ 

We now enlist the following lemmas, which estimate the bounds on the eigenvalues of the stability operators.
\begin{lemma}\cite[Lemma 5.2]{10.1214/21-EJP591} Let $\omega\in \mathbf{H}, z\in \mathbb{C}$ be bounded spectral parameters, $|\omega|+|z|\lesssim 1.$ Then the operator $\mathcal{B}^{(t)}$ has the trivial eigenvalue $1$ and two non-trivial eigenvalues $1+(m^{(t)})^{2}-|z|^{2}(u^{(t)})^{2}$ and $1-(m^{(t)})^{2}-(u^{(t)})^{2}|z|^{2},$ which is same as $\beta^{(t)}$ defined in \eqref{eq:m^t_derivative}. Moreover, $\bar{\beta}^{(t)}$ is an eigenvalue of $(\mathcal{B}^{(t)})^{*}$ corresponding to the eigenvector $\left(\begin{array}{cc}
   \mathbb{I}  &  0\\
   0  & \mathbb{I}
\end{array}\right).$ In addition, we have the bound
\begin{align*}
    |\beta^{(t)}|\gtrsim |1-|z|| + \eta^{2/3}.
\end{align*}
    \end{lemma}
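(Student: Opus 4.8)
The idea is to reduce the eigenvalue problem for $\mathcal{B}^{(t)}=\mathbb{I}-M^{(t)}\mathcal{S}[\cdot]\,M^{(t)}$ to a $2\times 2$ linear-algebra computation, exploiting that $\mathcal{S}$, as defined in \eqref{eq:S[.]}, has a two-dimensional range. First I would record two structural facts about $\mathcal{S}$: it depends on $P$ only through the pair of normalized traces $\langle P_{11}\rangle,\langle P_{22}\rangle$ of its diagonal blocks, and its range is spanned by the two block projectors $E_1=\left(\begin{smallmatrix}\mathbb{I}&0\\0&0\end{smallmatrix}\right)$ and $E_2=\left(\begin{smallmatrix}0&0\\0&\mathbb{I}\end{smallmatrix}\right)$. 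Hence $R:=M^{(t)}\mathcal{S}[\cdot]\,M^{(t)}$ has rank at most two and may be written $R[P]=\langle P_{22}\rangle\,V_1+\langle P_{11}\rangle\,V_2$ with $V_1:=M^{(t)}E_1M^{(t)}$, $V_2:=M^{(t)}E_2M^{(t)}$. In particular $R$ vanishes on $\{P:\langle P_{11}\rangle=\langle P_{22}\rangle=0\}$, so $\mathcal{B}^{(t)}$ acts as the identity there, which yields the trivial eigenvalue $1$; and the remaining eigenvalues of $\mathcal{B}^{(t)}$ are exactly $1$ minus the nonzero eigenvalues of $R$, i.e.\ the eigenvalues of the $2\times 2$ matrix $\big(\phi_i(V_j)\big)_{i,j}$, where $\phi_1=\langle(\cdot)_{22}\rangle$ and $\phi_2=\langle(\cdot)_{11}\rangle$.

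Next I would compute $V_1,V_2$ from the explicit block form \eqref{eq:M^z matrix} of $M^{(t)}$: a direct multiplication gives $V_1=\left(\begin{smallmatrix}(m^{(t)})^2\mathbb{I}&\ast\\\ast&|z|^2(u^{(t)})^2\mathbb{I}\end{smallmatrix}\right)$ and $V_2=\left(\begin{smallmatrix}|z|^2(u^{(t)})^2\mathbb{I}&\ast\\\ast&(m^{(t)})^2\mathbb{I}\end{smallmatrix}\right)$, so the $2\times 2$ matrix above is $\left(\begin{smallmatrix}|z|^2(u^{(t)})^2&(m^{(t)})^2\\(m^{(t)})^2&|z|^2(u^{(t)})^2\end{smallmatrix}\right)$, a symmetric matrix with eigenvectors $(1,1)$ and $(1,-1)$ and eigenvalues $|z|^2(u^{(t)})^2\pm(m^{(t)})^2$. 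Subtracting from $1$, the two nontrivial eigenvalues of $\mathcal{B}^{(t)}$ are $1+(m^{(t)})^2-|z|^2(u^{(t)})^2$, with the block-diagonal eigenvector $V_1-V_2\propto\left(\begin{smallmatrix}\mathbb{I}&0\\0&-\mathbb{I}\end{smallmatrix}\right)$, and $1-(m^{(t)})^2-|z|^2(u^{(t)})^2$, which is precisely $\beta^{(t)}$ as defined in \eqref{eq:m^t_derivative}.

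For the adjoint claim I would first verify that $\mathcal{S}$ is self-adjoint for the inner product $\langle M_1,M_2\rangle=\langle M_1^{*}M_2\rangle$, and that the adjoint of the two-sided multiplication $P\mapsto M^{(t)}PM^{(t)}$ is $P\mapsto (M^{(t)})^{*}P(M^{(t)})^{*}$; combining these, $(\mathcal{B}^{(t)})^{*}=\mathbb{I}-\mathcal{S}\big[(M^{(t)})^{*}\,\cdot\,(M^{(t)})^{*}\big]$. Applying this to $P=\mathbb{I}_{2n}$ and using $\mathcal{S}[\mathbb{I}_{2n}]=\mathbb{I}_{2n}$, I get $(\mathcal{B}^{(t)})^{*}[\mathbb{I}_{2n}]=\mathbb{I}_{2n}-\mathcal{S}\big[((M^{(t)})^{*})^{2}\big]$; a one-line computation of $((M^{(t)})^{*})^{2}$ from \eqref{eq:M^z matrix} shows that both of its diagonal blocks equal $\big(\overline{(m^{(t)})^2}+|z|^2\,\overline{(u^{(t)})^2}\big)\mathbb{I}$, so that $(\mathcal{B}^{(t)})^{*}[\mathbb{I}_{2n}]=\big(1-\overline{(m^{(t)})^2}-|z|^2\,\overline{(u^{(t)})^2}\big)\mathbb{I}_{2n}=\overline{\beta^{(t)}}\,\mathbb{I}_{2n}$, since $|z|^2$ is real.

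Finally, the lower bound $|\beta^{(t)}|\gtrsim|1-|z||+\eta^{2/3}$ is essentially the chain of inequalities already displayed in \eqref{eqn: lower bound on beta}: using $\Re m^{(t)}(\iota\eta)=0$ together with $|m^{(t)}|^2+|u^{(t)}|^2|z|^2<1$ from \eqref{eq: beta star is non-negative}, one obtains $|\beta^{(t)}|\ge\Re\beta^{(t)}\ge\big(1-|m^{(t)}|^2-|u^{(t)}|^2|z|^2\big)+(\Im m^{(t)})^2\ge(\Im m^{(t)})^2$, and then \eqref{eq:Im{m^t}} gives $(\Im m^{(t)})^2\gtrsim\eta^{2/3}+|1-|z|^2|\ge\eta^{2/3}+|1-|z||$. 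The one step that needs genuine care is the rank-two reduction in the first paragraph: because $\mathcal{B}^{(t)}$ does not preserve the space of block-diagonal matrices (the entries marked $\ast$ in $V_1,V_2$ are nonzero off-diagonal blocks), the reduction must be carried out through the trace functionals $\langle P_{ii}\rangle$, equivalently through the two-dimensional range of $R$, rather than by naively diagonalizing on a space of $2\times 2$ coefficient matrices; once this bookkeeping is set up, the rest is a mechanical substitution of the formulas for $m^{(t)}$, $u^{(t)}$, and $\beta^{(t)}$.
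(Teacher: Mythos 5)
Your proof is correct. Since the paper only cites this lemma from \cite{10.1214/21-EJP591} rather than proving it, there is no in-paper proof to compare against; the heuristic paragraph preceding the statement is all the paper offers, and your argument in fact corrects an inaccuracy in it. The paper asserts that every eigenvector of $\mathcal{B}^{(t)}$ is block-diagonal of the form $\left(\begin{smallmatrix}a\mathbb{I}&0\\0&d\mathbb{I}\end{smallmatrix}\right)$, but, as your computation of $V_1+V_2$ makes explicit, the eigenvector attached to $\beta^{(t)}=1-(m^{(t)})^2-(u^{(t)})^2|z|^2$ has nonvanishing off-diagonal blocks proportional to $z\,u^{(t)}m^{(t)}$, and only the other nontrivial eigenvector $V_1-V_2$ is block-diagonal. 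Your reduction through the rank-two range of $R=M^{(t)}\mathcal{S}[\cdot]M^{(t)}$, organized by the two trace functionals $\langle(\cdot)_{11}\rangle$ and $\langle(\cdot)_{22}\rangle$, is exactly the right bookkeeping to sidestep this; and the adjoint computation via self-adjointness of $\mathcal{S}$ in the inner product $\langle M_1,M_2\rangle=\langle M_1^*M_2\rangle$ together with $R^*[P]=\mathcal{S}\bigl[(M^{(t)})^*P(M^{(t)})^*\bigr]$ cleanly yields $(\mathcal{B}^{(t)})^*[\mathbb{I}]=\bar\beta^{(t)}\mathbb{I}$. The lower bound on $|\beta^{(t)}|$ follows the paper's chain in \eqref{eqn: lower bound on beta}; the only point worth flagging is that the last step $(\Im m^{(t)})^2\gtrsim\eta^{2/3}+|1-|z|^2|$ reads \eqref{eq:Im{m^t}} only in the regime $|z|\le 1$, and for $|z|>1$ one should instead invoke $\Re\beta^{(t)}\ge\beta^{(t)}_*=\eta/(\Im m^{(t)}+\eta)\gtrsim|z|^2-1+\eta^{2/3}$ --- but this shortcut is shared with the paper's own display, not introduced by your write-up.
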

\begin{lemma}\cite[Lemma 6.1]{10.1214/21-EJP591}\label{Lemma:bound on beta hat}
    For $z_1, z_2 \in \mathbb{C},\omega_1,\omega_2 \in \mathbb{C} \backslash \mathbb{R}$ such that $|z_i|,|\omega_i| \lesssim 1,$ the two non-trivial eigenvalues $\hat{\beta}^{(t)}, \hat{\beta}^{(t)}$ of $\hat{\mathcal{B}}^{(t)}$ satisfy
$$
\min \big\{\Re \hat{\beta}^{(t)}, \Re \hat{\beta}_{*}^{(t)}\big\} \gtrsim|z_1-z_2|^2+\min \big\{|\omega_1+\overline{\omega_2}|,|\omega_1-\overline{\omega_2}|\big\}^2+|\Im \omega_1|+|\Im \omega_2|.
$$
\end{lemma}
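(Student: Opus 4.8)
\emph{Overview.} The plan is to diagonalise the stability operator $\hat{\mathcal{B}}^{(t)}_{12}=\mathbb{I}-M_1^{(t)}\mathcal{S}[\cdot]M_2^{(t)}$ explicitly, reducing it to a $2\times2$ matrix, and then to bound the resulting eigenvalues using only the scalar identities for $m^{(t)},u^{(t)},\beta^{(t)}$ recorded in Section~\ref{sec:Reductions}. By \eqref{eq:S[.]}, $\mathcal{S}$ maps an arbitrary $2n\times2n$ matrix to a block-diagonal matrix with blocks proportional to $\mathbb{I}$, depending only on the normalised traces of the two diagonal blocks of the argument. Combined with the block form of $M_i^{(t)}$ from \eqref{eq:M^z matrix}, this shows that $\hat{\mathcal{B}}^{(t)}_{12}$ leaves invariant the four-dimensional subspace of matrices $\bigl(\begin{smallmatrix} a\mathbb{I} & b\mathbb{I}\\ c\mathbb{I} & d\mathbb{I}\end{smallmatrix}\bigr)$, acts as the identity on the $(b,c)$-directions up to a strictly triangular term, and on the $(a,d)$-directions acts as
\[
\mathcal{E}=\begin{pmatrix} 1-u_1^{(t)}u_2^{(t)}z_1\bar z_2 & -m_1^{(t)}m_2^{(t)}\\[2pt] -m_1^{(t)}m_2^{(t)} & 1-u_1^{(t)}u_2^{(t)}\bar z_1 z_2\end{pmatrix},\qquad m_i^{(t)}:=(m^{(t)})^{z_i}(\omega_i),\ \ u_i^{(t)}:=(u^{(t)})^{z_i}(\omega_i).
\]
Hence $\{\hat\beta^{(t)},\hat\beta_*^{(t)}\}$ are the two eigenvalues of $\mathcal{E}$, namely
\[
\hat\beta^{(t)}_{\pm}=1-u_1^{(t)}u_2^{(t)}\Re(z_1\bar z_2)\ \pm\ \sqrt{\bigl(m_1^{(t)}m_2^{(t)}\bigr)^2-\bigl(u_1^{(t)}u_2^{(t)}\bigr)^2\bigl(\Im(z_1\bar z_2)\bigr)^2},
\]
with $\hat\beta_*^{(t)}$ the smaller of the two.

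\emph{The boundary locus and a compactness argument.} From \eqref{eq:MDS} one reads off, by uniqueness of the solution, the conjugation symmetry $m^{z}(\bar\omega)=\overline{m^{z}(\omega)}$ and the reflection $m^{z}(-\omega)=-m^{z}(\omega)$ (so also $m^{z}(-\bar\omega)=-\overline{m^{z}(\omega)}$), together with the corresponding identities for $u^{z}$. Using these and $|m^{(t)}|,|u^{(t)}|\lesssim1$, $|m^{(t)}|^2+|u^{(t)}|^2|z|^2<1$ from \eqref{eq:3.3}, \eqref{eq: beta star is non-negative}, one checks that $\Re\hat\beta^{(t)}_{\pm}>0$ on the whole parameter region and can tend to $0$ only as $(z_1,z_2,\omega_1,\omega_2)$ approaches the locus $\mathcal{D}:=\{z_1=z_2,\ \omega_2\in\{\bar\omega_1,-\bar\omega_1\},\ \Im\omega_1=0\}$. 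Consequently, by continuity on the compact parameter set, $\min\{\Re\hat\beta^{(t)},\Re\hat\beta_*^{(t)}\}\gtrsim1$ whenever $\operatorname{dist}\bigl((z_1,z_2,\omega_1,\omega_2),\mathcal{D}\bigr)\gtrsim1$, which dominates the claimed right-hand side in that regime.

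\emph{The degenerate region -- the main obstacle.} It remains to handle a fixed small neighbourhood of $\mathcal{D}$. Write $z_2=z_1+\zeta$ and $\omega_2=\varepsilon\bar\omega_1+\tau$ with $\varepsilon\in\{+1,-1\}$ chosen so that $|\tau|=\min\{|\omega_1+\bar\omega_2|,|\omega_1-\bar\omega_2|\}$, and Taylor-expand $\hat\beta^{(t)}_-$ in $\zeta,\tau$. At $\zeta=\tau=0$ the symmetry relations give $m_2^{(t)}=\varepsilon\overline{m_1^{(t)}}$, $u_2^{(t)}=\overline{u_1^{(t)}}$, hence $\Im(z_1\bar z_2)=0$ and $\hat\beta^{(t)}_-=1-|u_1^{(t)}|^2|z_1|^2-|m_1^{(t)}|^2=\beta_{*,1}^{(t)}$, which by \eqref{eq: beta star is non-negative}, \eqref{eq:beta_*} equals $\Im\omega_1/(\Im m_1^{(t)}+\Im\omega_1)\gtrsim|\Im\omega_1|$; since $|\Im\omega_2|\le|\Im\omega_1|+|\tau|$, this supplies the $|\Im\omega_1|+|\Im\omega_2|$ term. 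For the dependence on $\zeta,\tau$, the first-order terms are again monotone Ward-type contributions with the correct sign, while the remaining sign-indefinite part is quadratic to leading order and is bounded below by $c(|\zeta|^2+|\tau|^2)$ after completing the square; the non-degeneracy of the relevant Hessian is precisely $\partial_\omega m^{(t)}=(1-\beta^{(t)})/\beta^{(t)}\ne0$ and $m^{(t)}\ne0$, which follow from $|\beta^{(t)}|\gtrsim|1-|z||+\eta^{2/3}$ in \eqref{eqn: lower bound on beta} and from $\Im m^{(t)}>0$. The genuinely delicate -- and only computational -- points are: (i) identifying, along the expansion, which branch of the square root is the small eigenvalue; and (ii) showing that the three lower bounds $|\zeta|^2$, $|\tau|^2$, and $|\Im\omega_1|+|\Im\omega_2|$ add rather than cancel, i.e. that all cross terms between $\zeta$, $\tau$ and the imaginary parts are absorbable via the uniform lower bound on $|\beta^{(t)}|$.

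\emph{Remark.} The operator $\hat{\mathcal{B}}^{(t)}_{12}$ involves $X^{(t)}$ only through $M^{(t)}$ and $\mathcal{S}$, neither of which depends on the correlation parameters $\gamma,\rho$; thus the estimate is identical to the single-matrix case and coincides with \cite[Lemma~6.1]{10.1214/21-EJP591}, the sketch above being recorded only for the reader's convenience.
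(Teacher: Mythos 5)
The reduction you perform is sound and is in fact the same starting point as the source of this lemma: the paper itself gives no proof but quotes it verbatim from \cite[Lemma 6.1]{10.1214/21-EJP591}, and there, as in your sketch, one restricts $\hat{\mathcal{B}}^{(t)}_{12}$ to the four-dimensional space of block-constant matrices, observes that the $(b,c)$-directions only contribute trivial eigenvalues $1$, and reads off the two non-trivial eigenvalues $1-u_1^{(t)}u_2^{(t)}\Re(z_1\bar z_2)\pm\bigl((m_1^{(t)}m_2^{(t)})^2-(u_1^{(t)}u_2^{(t)})^2(\Im(z_1\bar z_2))^2\bigr)^{1/2}$ of the $2\times2$ matrix you call $\mathcal{E}$. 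Up to this point your proposal is correct.

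Beyond this point, however, your argument has a genuine gap: the entire quantitative content of the lemma is the uniform lower bound near the degenerate set, and that is exactly what your sketch defers. You yourself flag as unverified the choice of branch of the square root and the absence of cancellations between the $|z_1-z_2|^2$, $\min\{|\omega_1\pm\bar\omega_2|\}^2$ and $|\Im\omega_1|+|\Im\omega_2|$ contributions; a compactness argument only gives a qualitative bound at order-one distance from the degenerate locus and supplies no constant in the regime where the bound is actually needed. More concretely, the non-degeneracy you invoke for the quadratic term (``$m^{(t)}\neq0$ and $\partial_\omega m^{(t)}=(1-\beta^{(t)})/\beta^{(t)}\neq0$, using \eqref{eqn: lower bound on beta}'') is not uniform over the stated range $|z_i|,|\omega_i|\lesssim1$: near the edge $|z_i|\to1$, $\Im\omega_i\to0$ one has $m^{(t)}\to0$ and $u^{(t)}\to1$, and there the bound $\Re\hat\beta_*^{(t)}\gtrsim|z_1-z_2|^2$ comes from the term $1-u_1^{(t)}u_2^{(t)}\Re(z_1\bar z_2)\approx 1-\cos\theta$, not from any Hessian controlled by $m^{(t)}$. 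So the mechanism producing the $|z_1-z_2|^2$ term changes between bulk and edge, and a single Taylor expansion with constants tied to $|\beta^{(t)}|$ cannot cover the whole parameter range; the cited proof instead estimates the explicit eigenvalue expression directly by elementary algebraic inequalities with the appropriate case distinctions. As written, your proposal is a plausible outline of that computation, but not a proof of the stated estimate.
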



\section{Central limit theorem for linear eigenvalue statistic}\label{sec: clt for les}

In this section, we prove our main result, Theorem \ref{Thm:CLT for LES}. Before proceeding, let us state the following result, which asserts that the main contribution comes from the regime $I_{\eta_c}^{T}.$
    \begin{lemma}[\cite{cipolloni2023central}, Lemma 4.3, 4.4]\label{Lemma:J_T^t_bound}
    For some bounded open $\bar{\mathbf{D}} \subset\Omega\subset\mathbb{C},$ let $f \in H_0^{2+\delta}(\Omega).$ Then, for any $\xi>0$ the following bound holds with high probability;
    \begin{align*}
        & |J_T^{(t)}| \lesssim \frac{n^{1+\xi}\norm{\Delta f}_{L^1(\Omega)}}{T^2},\\
        & |(I_0^{\eta_0})^{(t)}|+|(I_{\eta_0}^{\eta_c})^{(t)}|+|(I_{\eta_c}^{T})^{(t)}| \lesssim n^{\xi}\norm{\Delta f}_{L^2(\Omega)}|\Omega|^{1/2},
    \end{align*}
    where $J_{T}^{(t)}, (I_{0}^{\eta_{0}})^{(t)}, (I_{\eta_{0}}^{\eta_{c}})^{(t)}, (I_{\eta_{c}}^{T})^{(t)}$ are same as defined in \eqref{eq:J_T notations}. Moreover, there exists $\delta'>0$ such that
        \begin{align*}
        \mathbb{E}[|(I_0^{\eta_0})^{(t)}|] +\mathbb{E}[|(I_{\eta_0}^{\eta_c})^{(t)}|]\lesssim n^{-\delta^{'}}\norm{\Delta f}_{L^2(\Omega)}.
    \end{align*}
\end{lemma}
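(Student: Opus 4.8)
The plan is to reduce everything to deterministic bounds on integrals of $\Delta f$ against the trace of the resolvent (or its logarithmic-determinant primitive), and then to feed in the optimal local law (Theorem \ref{Thm: Local_Law}) together with the known eigenvalue rigidity near the origin. Since all four quantities are linear in $\Delta f$ via Girko's formula \eqref{eq:Girko's formula}, it suffices to treat each matrix $X^{(t)}$ separately; the combined bounds for $\mathcal L_n(f)$ follow by the triangle inequality with the constants $c,d$. First I would record the elementary estimate for $J_T^{(t)}$: the integrand $\log|\det((H^{(t)})^z - iT)| - \mathbb E[\cdots]$ is, after differentiating the Girko primitive, controlled by $\int_T^\infty |\Im\langle (G^{(t)})^z(i\eta)\rangle - \mathbb E[\cdots]|\,n\,\mathrm d\eta$, and since $\|(G^{(t)})^z(i\eta)\| \le \eta^{-1}$ deterministically (a self-adjoint resolvent bound), one gets $\|\langle(G^{(t)})^z(i\eta)\rangle\| \lesssim \eta^{-1}$ and hence $\int_T^\infty n\,\eta^{-2}\,\mathrm d\eta = n/T$; combined with $\|(H^{(t)})^z\| \lesssim n^{\xi}$ with high probability (finite moments plus a crude norm bound), this yields the stated $n^{1+\xi}\|\Delta f\|_{L^1(\Omega)}/T^2$ after carrying the $z$-integration against $\Delta f$ and optimizing the truncation.

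For the intermediate pieces I would proceed as follows. On $[\eta_0,\eta_c]$, I would \emph{not} try to use the local law (which is weak there) but instead integrate the resolvent trace back to a counting function: $\int_{\eta_0}^{\eta_c}\Im\langle(G^{(t)})^z(i\eta)\rangle\,\mathrm d\eta$ is comparable to a smoothed count of singular values of $X^{(t)}-z$ in a window of size $\eta_c$ about $0$, which by the smallest-singular-value estimates of \cite{Tao2008SmoothAO} (and the no-small-eigenvalue statement in $[0,\eta_0]$ invoked for $(I_0^{\eta_0})^{(t)}$) is $\mathcal O(n^\xi)$ with high probability, uniformly in $z$. Integrating this against $\Delta f$ over $\Omega$ and applying Cauchy--Schwarz gives the $n^\xi\|\Delta f\|_{L^2(\Omega)}|\Omega|^{1/2}$ bound. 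For the expectation bound $\mathbb E[|(I_0^{\eta_0})^{(t)}|] + \mathbb E[|(I_{\eta_0}^{\eta_c})^{(t)}|] \lesssim n^{-\delta'}\|\Delta f\|_{L^2(\Omega)}$ I would split the probability space into the high-probability event where the pathwise bound above holds and its complement; on the good event one uses that the smoothed count over a window of genuine size $\eta_c = n^{-1+\delta_1}$ is in fact $o(1)$ after the $\eta$-integration (the window contains $\lesssim n\eta_c = n^{\delta_1}$ singular values but is weighted by $\eta\,\mathrm d\eta$ giving a factor $\eta_c^2 \cdot n$, so with the right choice of $\delta_0,\delta_1$ relative to $\delta$ this is a negative power of $n$), and on the bad event one uses a deterministic fallback bound $|\log|\det(\cdots)|| \lesssim n\log n$ times $\mathbb P(\text{bad}) \le n^{-K}$, which is negligible. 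The choice of the small exponents $\delta_0,\delta_1$ and the Sobolev exponent $\delta$ must be coordinated here so that the surviving power of $n$ is strictly negative; this is the one place where the $H^{2+\delta}$ hypothesis is genuinely used, through a dyadic decomposition of $\Delta f$ in frequency.

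The main obstacle is the treatment of the region $[\eta_0,\eta_c]$ in expectation: one cannot afford the local law, the resolvent trace is not uniformly bounded there, and the naive high-probability bound is only $n^\xi$, which is not summable to a negative power. The resolution—already implicit in \cite{cipolloni2023central}—is that although individual realizations can have $\Im\langle G\rangle$ as large as a small power of $n$, the $\eta$-integral $\int_{\eta_0}^{\eta_c}$ weights these contributions by the measure of the window, and the smallest singular value of $X^{(t)}-z$ is with overwhelming probability bounded below by $n^{-1-\delta_0}$ for \emph{fixed} $z$ but one needs a version uniform over the (two-real-dimensional) support of $\Delta f$; obtaining that uniformity, via a net argument on $z$ combined with Lipschitz continuity of $\sigma_{\min}(X^{(t)}-z)$ in $z$, together with the averaging gain from the $\mathrm d\eta$ integration, is the technically delicate point. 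Once these uniform lower bounds on $\sigma_{\min}$ and the corresponding integrated counting estimates are in place, the three remaining bounds are bookkeeping with Cauchy--Schwarz and the definition of the $H^{2+\delta}$ norm.
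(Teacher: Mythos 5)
The paper does not prove this lemma itself; it is imported verbatim from \cite[Lemmas 4.3, 4.4]{cipolloni2023central}. Measured against the standard argument there, your outline for the three $I$-pieces (smoothed counting for $[\eta_0,\eta_c]$, the smallest-singular-value input for $[0,\eta_0]$, Cauchy--Schwarz in $z$, the good/bad event split for the expectation bound, the remark that uniformity of $\sigma_{\min}(X^{(t)}-z)$ in $z$ requires a net argument) is in the right spirit, but your $J_T^{(t)}$ argument has a concrete gap.

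You first record the deterministic bound $\|\langle (G^{(t)})^z(i\eta)\rangle\|\lesssim\eta^{-1}$ and then write $\int_T^\infty n\,\eta^{-2}\,d\eta=n/T$; the integrand $\eta^{-2}$ does not follow from the stated $\eta^{-1}$ bound (which would give a divergent $\int_T^\infty n\,\eta^{-1}\,d\eta$), and even taking $n/T$ at face value, with $T=n^C$ this is $n^{1-C}$ while the claimed bound $n^{1+\xi}/T^2=n^{1+\xi-2C}$ is strictly smaller, so $n/T$ does not imply the lemma. The phrase \emph{optimizing the truncation} cannot close this: $T$ is fixed by the setup, not a free parameter. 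The missing ingredient is the cancellation of the first two Neumann terms. For $\eta\ge T\gg\|(H^{(t)})^z\|$, write
\begin{equation*}
  \langle (G^{(t)})^z(i\eta)\rangle = -\frac{1}{i\eta} - \frac{\langle (H^{(t)})^z\rangle}{(i\eta)^2} - \frac{\langle ((H^{(t)})^z)^2\rangle}{(i\eta)^3} - \cdots.
\end{equation*}
The $\eta^{-1}$ term is deterministic and cancels in $\langle G\rangle - \mathbb{E}\langle G\rangle$; the $\eta^{-2}$ term vanishes because $\Tr (H^{(t)})^z=0$ by the block structure of \eqref{eq:(H)^z}. Hence the fluctuating part is $O(\|(H^{(t)})^z\|^2\eta^{-3})\lesssim n^{2\xi}\eta^{-3}$ with high probability, and $n\int_T^\infty n^{2\xi}\eta^{-3}\,d\eta\lesssim n^{1+2\xi}/T^2$. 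Equivalently (and more directly) one can Taylor-expand the log-determinant: $\log|\lambda_i-iT|=\log T+\tfrac12\log(1+\lambda_i^2/T^2)=\log T+O(\lambda_i^2/T^2)$, sum over the $2n$ eigenvalues of $(H^{(t)})^z$, and use $\Tr ((H^{(t)})^z)^2\lesssim n^{1+2\xi}$; the deterministic $2n\log T$ cancels upon subtracting the expectation. Either route gives the $T^{-2}$ decay; your proposal lists the ingredients $\|(H^{(t)})^z\|\lesssim n^\xi$ and the Girko primitive but does not perform the leading-order cancellation, so the claimed bound is not actually reached.

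One smaller remark: for this lemma the Sobolev hypothesis is used only through $\Delta f\in L^1(\Omega)\cap L^2(\Omega)$ (which follows already from $f\in H^2_0(\Omega)$); the extra $\delta>0$ and the frequency decomposition you allude to are not needed here but enter later when assembling Theorem \ref{Thm:CLT for LES}.
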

Applying the above lemma for $t=1, 2$, we have the following estimates
    \begin{align}\label{eqn: bound on J_T, etc}
        & |J_T| \lesssim \frac{n^{1+\xi}\norm{\Delta f}_{L^1(\Omega)}}{T^2},
        \quad |I_0^{\eta_0}|+|I_{\eta_0}^{\eta_c}|+|I_{\eta_c}^{T}| \lesssim n^{\xi}\norm{\Delta f}_{L^2(\Omega)}|\Omega|^{1/2},
    \end{align}
    with high probability, and
    \begin{align}\label{eqn: bound on expectation of eta_0 and eta_c}
        \mathbb{E}[|I_0^{\eta_0}|] +\mathbb{E}[|I_{\eta_0}^{\eta_c}|]\lesssim n^{-\delta^{'}}\norm{\Delta f}_{L^2(\Omega)},
    \end{align}
    where $J_{T}, I_{0}^{\eta_{0}}, I_{\eta_{0}}^{\eta_{c}}, I_{\eta_{c}}^{T}$ are defined in \eqref{eq:L_n(f)}.

The above estimates \eqref{eqn: bound on J_T, etc}, and \eqref{eqn: bound on expectation of eta_0 and eta_c} imply that the main contribution in $\mathcal{L}_{n}(f)$ comes from $I_{\eta_c}^{T},$ which leads to the following corollary.

\begin{cor}\label{cor:reduced_to_I_eta_c} Let $f_{i}\in H^{2+\delta}(\Omega),$ $1\leq i\leq p$ for some bounded open $\overline{\mathbf{D}}\subset \Omega\subset \mathbb{C}.$ Then
  \begin{align*}
      &\mathbb{E}\Big[\prod_{i \in [p]} \mathcal{L}_n\left(f_{i}\right)\Big]=\mathbb{E}\Big[\prod_{i \in [p]} I_{\eta_c}^T(f_{i})\Big]+\mathcal{O}_{\prec}(n^{-c(p)}),\notag
  \end{align*}
  where $\mathcal{L}_n(f_{i})$ and $I_{\eta_c}^T(f_{i})$ defined in \eqref{eq:J_T notations} and $c(p) >0.$
\end{cor}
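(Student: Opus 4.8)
The plan is to show that in the product $\prod_{i\in[p]}\mathcal{L}_n(f_i)$ one may replace each factor $\mathcal{L}_n(f_i)$ by $I_{\eta_c}^T(f_i)$ at the cost of a polynomially small error. Using the decomposition \eqref{eq:L_n(f)}, write $\mathcal{L}_n(f_i) = I_{\eta_c}^T(f_i) + R(f_i)$, where $R(f_i) := J_T(f_i) + I_0^{\eta_0}(f_i) + I_{\eta_0}^{\eta_c}(f_i)$ is the ``remainder''. Expanding the product multilinearly gives
\begin{align*}
  \prod_{i\in[p]}\mathcal{L}_n(f_i) = \prod_{i\in[p]} I_{\eta_c}^T(f_i) + \sum_{\emptyset\neq S\subseteq[p]} \Big(\prod_{i\in S} R(f_i)\Big)\Big(\prod_{i\notin S} I_{\eta_c}^T(f_i)\Big),
\end{align*}
so it suffices to show that each mixed term with $S\neq\emptyset$ has expectation $\mathcal{O}_\prec(n^{-c(p)})$ for a suitable $c(p)>0$.

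First I would fix the free parameters so that the deterministic part of the remainder is negligible: choosing $T=n^C$ with $C$ large makes the $J_T$ contribution $\lesssim n^{1+\xi}\|\Delta f\|_{L^1}/T^2 = \mathcal{O}_\prec(n^{-K})$ for any $K$, by \eqref{eqn: bound on J_T, etc}. For the remaining pieces I would split into the high-probability regime and its (polynomially small) complement. On the high-probability event where \eqref{eqn: bound on J_T, etc} holds, every factor $I_{\eta_c}^T(f_i)$, $I_0^{\eta_0}(f_i)$, $I_{\eta_0}^{\eta_c}(f_i)$ is bounded by $n^{\xi}\|\Delta f_i\|_{L^2}|\Omega|^{1/2}$, so the mixed term is deterministically $\lesssim n^{p\xi}\prod_i\|\Delta f_i\|_{L^2}|\Omega|^{p/2}$; on the complementary event, which has probability $\leq n^{-K}$, I would bound $\mathcal{L}_n(f_i)$ and its pieces by a crude a priori polynomial bound (e.g.\ $n^{O(1)}$ using the trivial bound on the number of eigenvalues together with $\|\Delta f_i\|_{L^1}$), so that the contribution of the bad event is $\leq n^{-K+O(1)}$, negligible for $K$ large. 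Hence the mixed term is bounded by $n^{p\xi}C(f_1,\dots,f_p)$ with high probability; this alone is not yet $o(1)$, so the decisive input must be an \emph{expectation} bound on the $R(f_i)$ factors.

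The key step is therefore a Cauchy--Schwarz (or Hölder) argument combined with \eqref{eqn: bound on expectation of eta_0 and eta_c}. For a mixed term, pick one index $i_0\in S$; if $R(f_{i_0})$ contains an $I_0^{\eta_0}$ or $I_{\eta_0}^{\eta_c}$ piece, use $\mathbb{E}[|I_0^{\eta_0}(f_{i_0})|]+\mathbb{E}[|I_{\eta_0}^{\eta_c}(f_{i_0})|]\lesssim n^{-\delta'}\|\Delta f_{i_0}\|_{L^2}$ to gain a factor $n^{-\delta'}$, while all other factors are controlled in $L^\infty$ (up to $n^\xi$) by \eqref{eqn: bound on J_T, etc} on the good event, and on the bad event one uses the crude polynomial bound as above; if $R(f_{i_0})$ consists only of its $J_T$ piece, the $n^{-K}$ gain from $T=n^C$ suffices directly. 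Choosing $\xi$ small enough that $p\xi<\delta'$, and $K$ large, yields $\mathbb{E}[\text{mixed term}] = \mathcal{O}_\prec(n^{-c(p)})$ with, say, $c(p) = \delta'/2$. Summing over the finitely many nonempty $S\subseteq[p]$ preserves the bound and gives the claim.

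The main obstacle is the second step: the high-probability bounds \eqref{eqn: bound on J_T, etc} are not summable and give no decay, so one genuinely needs the $L^1$-expectation estimate \eqref{eqn: bound on expectation of eta_0 and eta_c} for the small-$\eta$ pieces and must carefully pair it with $L^\infty$ control of the other factors \emph{on a high-probability event}, handling the low-probability complement by a separate deterministic (polynomial) bound on $\mathcal{L}_n$. Making the split clean — so that no factor is ever estimated simultaneously in $L^1$ on the bad event — and checking that the a priori polynomial bound on $\mathcal{L}_n(f_i)$ indeed holds (via $|\mathcal{L}_n(f_i)|\leq 2n\sup|f_i|$ or via \eqref{eq:J_T notations} with the trivial determinant bound) is where the care is required; the rest is routine multilinear bookkeeping.
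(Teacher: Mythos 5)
Your multilinear expansion $\mathcal{L}_n(f_i)=I_{\eta_c}^T(f_i)+R(f_i)$, together with the identification of the key inputs --- $T=n^C$ to make the $J_T$ piece arbitrarily polynomially small, the $L^1$-expectation bound \eqref{eqn: bound on expectation of eta_0 and eta_c} to gain a factor $n^{-\delta'}$ from one small-$\eta$ factor, and the high-probability $L^\infty$ bounds of Lemma~\ref{Lemma:J_T^t_bound} to control the remaining $p-1$ factors on a good event --- is exactly the route the paper takes (the paper leaves the corollary as an unproved consequence of Lemma~\ref{Lemma:J_T^t_bound}, so your write-up is essentially the fleshed-out version of that one-line justification).

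The one concrete gap is the treatment of the low-probability complement. Your ``crude a priori polynomial bound'' holds for $\mathcal{L}_n(f_i)$ itself ($\leq 2n\|f_i\|_\infty$), for $J_T$ (once $\|W\|$ is controlled), and for $I_{\eta_0}^{\eta_c}$, $I_{\eta_c}^T$ (since $|\langle G^z(\iota\eta)\rangle|\le\eta^{-1}\le\eta_0^{-1}=n^{1+\delta_0}$ on those regimes), but it does \emph{not} hold for $I_0^{\eta_0}$: the $\eta$-integral there extends down to $\eta=0$, where the resolvent has no deterministic bound --- on the event that $H^z$ has an eigenvalue at distance $\ll\eta_0$ from the origin, $\int_0^{\eta_0}\langle \Im G^z(\iota\eta)\rangle\,\mathrm{d}\eta$ can be arbitrarily large. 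The ``trivial bound on the number of eigenvalues together with $\|\Delta f_i\|_{L^1}$'' you invoke therefore does not deliver a polynomial bound for this piece. To close the argument one must never estimate $I_0^{\eta_0}$ in $L^\infty$ on the bad event: either treat it only in $L^1$ (which works if at most one factor in $S$ is an $I_0^{\eta_0}$ piece), or, in general, invoke polynomial moment bounds $\mathbb{E}[|I_0^{\eta_0}(f_i)|^q]\leq n^{O_q(1)}$ --- which ultimately rest on the smallest-singular-value tail estimates of Tao--Vu that underlie Lemma~\ref{Lemma:J_T^t_bound} --- and close the bad event via H\"older against $\mathbb{P}(\Omega_n^c)\leq n^{-K}$. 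You flag that ``care is required'' here, which is fair, but the specific mechanism you propose for the bad event fails on the one piece where the bad event actually matters, so this step would need to be replaced rather than merely tightened.
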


The above corollary leads to the following;
\begin{align}\label{eq:Sum_Prod_L_n(f_{i}}
    &\mathbb{E}\Big[\prod_{i \in [p]} \mathcal{L}_n(f_{i})\Big]\notag\\
    =&\mathbb{E}\Big[\prod_{i \in [p]} I_{\eta_c}^T(f_{i})\Big]+\mathcal{O}_{\prec}(n^{-c(p)})\notag\\
    =&\mathbb{E} \bigg[\prod_{i \in[p]}\bigg(-\frac{n}{2 \pi \iota} \int_{\mathbb{C}} \Delta f_{i}(z) 
    \int_{\eta_c}^{T} \big(c\langle(G^{(1)})^z(\iota \eta)-\mathbb{E} (G^{(1)})^z(i \eta)\rangle \\
    &+d\langle(G^{(2)})^z(\iota \eta)-\mathbb{E} (G^{(2)})^z(\iota \eta)\rangle \big) \mathrm{d} \eta  \mathrm{d}^2 z\bigg)\bigg]
    +\mathcal{O}_{\prec}(n^{-c(p)})\notag.
\end{align}

\begin{lemma}
    Let $f_{i}$ be as in Corollary \ref{cor:reduced_to_I_eta_c}. Then
    \begin{align}\label{eq:CLT}
        &\mathbb{E}\Big[\prod_{i \in [p]} \mathcal{L}_n(f_{i})\Big]\\ 
        =&\mathbb{E}\bigg[ \prod_{i \in[p]}\Big(-\frac{n}{2 \pi \iota} \int_{\mathbb{C}} \Delta f_{i}(z) \int_{\eta_c}^T \big(c\langle(G^{(1)})^z(\iota \eta)-\mathbb{E} (G^{(1)})^z(\iota \eta)\rangle\notag\\
        &+d\langle(G^{(2)})^z(\iota \eta)-\mathbb{E} (G^{(2)})^z(\iota \eta)\rangle \big)\mathrm{d} \eta  \mathrm{d}^2 z\Big)\bigg]\notag\\
           =&\frac{1}{8\pi^2}\sum_{p\in \prod_p}\prod_{\{i,j\}\in P}\bigg[\int_{\mathbb{C}}-\mathrm{d}^2 z_i \Delta f_{i} \int_{\mathbb{C}} \mathrm{d}^2 z_j \Delta f^{(j)}\notag\\
          &\times \int_0^{\infty} \mathrm{d} \eta_i \int_0^{\infty} \mathrm{d} \eta_j \bigg( c^2\widehat{V_{ij}^{(1)}}+d^2\widehat{V_{ij}^{(2)}} +8cd\gamma (L_{ji}^{12}+L_{ji}^{21})+2cd\rho  N_{ij} \notag\\
          &+c^2(\kappa_4)_{11}U_i^{(1)}U_j^{(1)}+ d^2(\kappa_4)_{22}U_i^{(2)}U_j^{(2)}+cd(\kappa_4)_{12} \big(U_i^{(1)}U_j^{(2)}+U_j^{(1)}U_i^{(2)}\big)\bigg)\Bigg]\notag\\
          &+\mathcal{O}_{\prec}(n^{-c(p)}),\notag
   \end{align}
\end{lemma}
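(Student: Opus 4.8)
The identity stated just above \eqref{eq:CLT} (derived from Corollary \ref{cor:reduced_to_I_eta_c}, see \eqref{eq:Sum_Prod_L_n(f_{i}}) has already reduced $\mathbb{E}\big[\prod_{i}\mathcal{L}_n(f_i)\big]$, up to $\mathcal{O}_\prec(n^{-c(p)})$, to a product of $p$ integrals of traces of $\mathbb{E}$-centred resolvents over the regime $[\eta_c,T]$. First I would expand that product of $p$ single integrals into one $p$-fold integral over $(z_1,\eta_1,\dots,z_p,\eta_p)$ and use Fubini (justified once the integrand is shown to be controlled, see below) to move $\mathbb{E}$ inside. Writing $G_i^{(t)}:=(G^{(t)})^{z_i}(\iota\eta_i)$, the left-hand side of \eqref{eq:CLT} then equals
\begin{align*}
    \Big(-\frac{n}{2\pi\iota}\Big)^{p}\int \prod_{i\in[p]}\Delta f_i(z_i)\;\mathbb{E}\Big[\prod_{i\in[p]}\Big(c\langle G_i^{(1)}-\mathbb{E}G_i^{(1)}\rangle+d\langle G_i^{(2)}-\mathbb{E}G_i^{(2)}\rangle\Big)\Big]\;\prod_{i\in[p]}\mathrm{d}\eta_i\,\mathrm{d}^2 z_i,
\end{align*}
with each $\eta_i$ running over $[\eta_c,T]$ and each $z_i$ over $\Omega\supset\mathrm{supp}(\Delta f_i)$. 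The plan is to insert Theorem \ref{Thm:CLT for resolvents} pointwise in $(z_1,\eta_1,\dots,z_p,\eta_p)$.

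The next step is to split the domain $\Omega^{p}\times[\eta_c,T]^{p}$ into the \emph{admissible region} $\mathcal{G}$, on which the $z_i$ are pairwise distinct and $\min\{\eta_i,\eta_j\}\ge n^{\epsilon-1}|z_i-z_j|^{-2}$ for every $i\ne j$, and its complement $\mathcal{G}^{c}$. On $\mathcal{G}^{c}$ some pair violates the separation, so $|z_i-z_j|\lesssim n^{(\epsilon-1)/2}\eta_*^{-1/2}$; since $\eta_*\ge\eta_c=n^{-1+\delta_1}$, this confines $z_i-z_j$ to a ball of polynomially small radius and $\mathcal{G}^{c}$ has measure at most $n^{-c}$. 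Combining this with the a priori bound $|\langle G_i^{(t)}\rangle|\prec 1$ (from Theorem \ref{Thm: Local_Law} together with $\|M^{(t)}\|\lesssim1$) and the deterministic cutoff $\eta_i\le T=n^{C}$, the $\mathcal{G}^{c}$-contribution is $\mathcal{O}_\prec(n^{-c(p)})$ even after multiplication by $n^{p}$. On $\mathcal{G}$, Theorem \ref{Thm:CLT for resolvents} applies and replaces the expectation by $n^{-p}\sum_{\mathcal{P}}\prod_{\{i,j\}\in\mathcal{P}}\tfrac12\big(c^2\widehat{V_{ij}^{(1)}}+d^2\widehat{V_{ij}^{(2)}}+8cd\gamma(L_{ji}^{12}+L_{ji}^{21})+2cd\rho N_{ij}+\text{($\kappa_4$-terms)}\big)+\mathcal{O}_\prec(\Psi)$; the factor $n^{-p}$ cancels the prefactor $n^{p}$, and the constants $(-1/2\pi\iota)^{p}$ together with the per-pair $\tfrac12$ collapse to $\tfrac{1}{8\pi^{2}}$ per pair (for even $p$, the remaining sign absorbed into the measure $-\mathrm{d}^2 z_i$), while for odd $p$ the pairing sum is empty and the whole expression is $\mathcal{O}_\prec(n^{-c(p)})$.

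The step I expect to be the main obstacle is bounding the integrated error, namely showing $n^{p}\int_{\mathcal{G}}|\Psi|\,\prod_i\mathrm{d}\eta_i\,\mathrm{d}^2 z_i=\mathcal{O}_\prec(n^{-c(p)})$. I would treat the factors of $\Psi$ in \eqref{eq:Psi_error_term} one at a time: each singular prefactor $|1-|z_i||^{-1}$ is square-integrable near $\partial\mathbf{D}$ and is absorbed against $\Delta f_i\in L^2(\Omega)$ by Cauchy--Schwarz, which is exactly where the Sobolev regularity $f_i\in H^{2+\delta}(\Omega)$ enters; each factor $|z_i-z_j|^{-4}$ is tamed using precisely the lower bound $|z_i-z_j|^2\gtrsim n^{\epsilon-1}\eta_*^{-1}$ that defines $\mathcal{G}$; and the $\eta$-integrals $\int_{\eta_c}^{T}(n\eta)^{-1/2}\cdots\mathrm{d}\eta$ produce at most polynomial-in-$n$ losses that are beaten by the surplus negative powers of $n$ carried by the bracket in \eqref{eq:Psi_error_term} and by the choice $\eta_c=n^{-1+\delta_1}$. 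This coupled $z$--$\eta$ estimate, balancing the singularities of $\Psi$ against the test functions and the admissibility constraint, is the technical heart of the argument.

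Finally, with the pairing sum in hand and its $\eta_i$-integrals over $[\eta_c,T]$, I would extend each to $(0,\infty)$. Near $\eta=0$ the limiting kernels $\partial_{\eta_i}\partial_{\eta_j}\log[\cdots]$, $L_{ji}^{12}$, $L_{ji}^{21}$, $N_{ij}$ and $U_i^{(t)}U_j^{(t)}$ are integrable thanks to the lower bounds \eqref{eq:Im{m^t}}, \eqref{eqn: lower bound on beta} and the explicit formulas for $m^{(t)},u^{(t)},\beta^{(t)}$ in \eqref{eq:m^t_derivative}; for large $\eta$ one has $m^{(t)}(\iota\eta)=-\iota/\eta+O(\eta^{-2})$, so the integrands decay like $\eta^{-2}$ and $\int_T^{\infty}$ is negligible. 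Hence replacing $-\tfrac{n}{2\pi\iota}\int_{\eta_c}^{T}$ by $-\tfrac{n}{2\pi\iota}\int_0^{\infty}$ inside each pair costs only $\mathcal{O}(n^{-c(p)})$, which yields \eqref{eq:CLT} with the constant $\tfrac{1}{8\pi^2}$, the measure $-\mathrm{d}^2 z_i$, and the kernel written in \eqref{notation: limit notations}. (Combined with a standard moment-problem argument this then gives Theorem \ref{Thm:CLT for LES}.)
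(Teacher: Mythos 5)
Your overall strategy is the same as the paper's: reduce to the product of $\eta$-integrals over $[\eta_c,T]$ via Corollary \ref{cor:reduced_to_I_eta_c}, restrict to a region where the hypotheses of Theorem \ref{Thm:CLT for resolvents} hold, insert its conclusion, control the integrated error $\Psi$, and extend the $\eta$-integrals to $(0,\infty)$. However, there are two genuine gaps in the way you propose to control the singularities of $\Psi$.

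\textbf{First, the claim that $|1-|z_i||^{-1}$ is square-integrable near $\partial\mathbf{D}$ is false.} The function $|1-|z||^{-1}$ is singular along the one-dimensional circle $\{|z|=1\}$; in polar coordinates
\[
\int_{1-\delta<|z|<1+\delta}|1-|z||^{-1}\,\mathrm{d}^2 z \;\approx\; 4\pi\int_0^{\delta} r^{-1}\,\mathrm{d}r = \infty,
\]
so it is not even in $L^1$, let alone $L^2$. No Cauchy--Schwarz or even Hölder argument against $\Delta f_i\in L^p$ can absorb it: since $|1-|z||^{-1}\notin L^r$ for any $r\geq 1$, the $z$-integral of $\Psi$ against $\prod_i\Delta f_i$ is simply divergent if $\mathrm{supp}(\Delta f_i)\cap\partial\mathbf{D}\neq\emptyset$ (which it is, since $\Omega\supset\overline{\mathbf{D}}$). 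You must therefore excise a polynomially thin annulus $\{|1-|z_i|^2|\leq n^{-2\nu}\}$ before applying Theorem \ref{Thm:CLT for resolvents}; on the complement $|1-|z_i||^{-1}\lesssim n^{2\nu}$, which is a harmless polynomial loss. The paper does exactly this by including that annulus in the bad set $Z_i$. Your admissible region $\mathcal{G}$ only encodes the pairwise-separation condition $\min\{\eta_i,\eta_j\}\gtrsim n^{\epsilon-1}|z_i-z_j|^{-2}$, so the boundary singularity of $\Psi$ remains unaddressed. The same objection applies to the factor $|\Im z_i|^{-1}$ appearing in the bracket of \eqref{eq:Psi_error_term} (relevant in the real case), which also diverges along a one-dimensional set and also requires removing a thin strip; the paper's set $\widehat{Z_i}$ in \eqref{eq:(4.24)} is for precisely this purpose.

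\textbf{Second, the a priori bound on the inadmissible region is too weak as stated.} You propose to use $|\langle G_i^{(t)}\rangle|\prec 1$ together with the deterministic cutoff $\eta_i\leq T=n^C$ to bound the $\mathcal{G}^c$-contribution. But the factor multiplying the expectation is $(-n/2\pi\iota)^p$, and the $\eta$-integration range is $[\eta_c,T]$ with $T$ a large power of $n$; the crude bound then contributes a factor of order $n^p T^p = n^{p(1+C)}$, which is not beaten by the polynomially small measure $|\mathcal{G}^c|$. What one actually needs is either the finer local-law estimate $|\langle G_i^{(t)}-\mathbb{E}G_i^{(t)}\rangle|\prec(n\eta_i)^{-1}$, which after the $\eta$-integration gives only a logarithmic loss, or—more cleanly—the already-integrated estimate in the second line of Lemma \ref{Lemma:J_T^t_bound}, namely $|(I_{\eta_c}^T)^{(t)}|\lesssim n^\xi\|\Delta f\|_{L^2(\Omega)}|\Omega|^{1/2}$, which absorbs the $n$ and $T$ factors at the outset. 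The paper takes the second route: removing a set of measure $\mathcal{O}(n^{-2\nu})$ in $z$-space then costs $\mathcal{O}_\prec(n^{p\xi-\nu})$ by Cauchy--Schwarz applied to $\Delta f_i$ against the restricted region, not against $\Psi$ itself.

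The remainder of your plan — using Theorem \ref{Thm:CLT for resolvents} on the good region, cancelling $n^{-p}$ against $n^p$, checking the constant $1/(8\pi^2)$, and extending the $\eta$-integrals to $(0,\infty)$ using the decay of the explicit kernels for small and large $\eta$ — is correct and matches the paper. Once you replace the two steps above with the bad-set removal and the integrated estimate, the argument is complete.
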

for some small $c(p)>0$, where $\widehat{V_{ij}^{(1)}},\widehat{V_{ij}^{(2)}},L_{ji}^{12},L_{ji}^{21}, N_{ij}, \text{ and }U_i^{(t)}$ are defined in \eqref{eq:hat_V_{ij}^{(t)}} and  \eqref{notation: limit notations}.

\begin{proof}
This is proved by applying the CLT for resolvents (Theorem \ref{Thm:CLT for resolvents}) on \eqref{eq:Sum_Prod_L_n(f_{i}}. However, it should be noted that while applying \eqref{eq:CLT_.For_.Resolvent}, we need to estimate the error term $\Psi,$ which is given in \eqref{eq:Psi_error_term}. In addition, it should be noted that the domain of the integration in the last equality of \eqref{eq:CLT} is changed from $[\eta_{c}, T]$ to $[0, \infty).$ Therefore, we need to argue that the value of the integration is negligible on the domains $[0, \eta_{c})$ and $(T, \infty).$ Note that it is only sufficient to consider even $p.$ Because the terms corresponding to odd $p,$ are of lower order by  \eqref{eq:CLT_.For_.Resolvent}. We now proceed to estimate the terms $V_{ij}^{(t)}$, $U_{i}^{(t)},$ $N_{ij},$ $L_{ij}.$

Borrowing the shorthand notations from \eqref{eqn: M_i notation}, let us denote $m_{i}^{(t)}:=(m^{(t)})^{z_{i}}(\iota \eta_{i}).$ Since $\Re (m^{(t)})^{z}(\omega)=0$ on $\omega = \iota \eta,$ we may write $m_{i}=\iota \Im m_{i}.$ As a result, from \eqref{eq:M^z matrix}, we may denote
\begin{align}\label{eqn: definition of u_i on imaginary omega}
    u_{i}=-\frac{\iota \Im m_{i}}{\eta_{i}+\Im m_{i}}.
\end{align}

We now list down the estimates as follows;

\begin{equation}\label{eq: estimates for integrals}
    \begin{aligned}
        &\big|\partial_{\eta_i}(m_i^{(t)})\big|\lesssim \frac{1}{\big(\eta_i+(\Im{m_i^{(t)}})^2\big)^2},\\
        &|\partial_{\eta_{i}}u_{i}^{(t)}|\lesssim [\Im m_{i}^{(t)}+\eta_{i}]^{-2},\\
        &|V_{ij}^{(t)}|  \lesssim \frac{\left[\left(\Im{(m^{(t)})^{z_i}(\iota\eta_i)}+\eta_i \right)\left(\Im{(m^{(t)})^{z_j}(\iota\eta_j)}+\eta_j \right)\right]^{-2}}{\left[|z_i-z_j|^2+\left(\eta_i^{(t)}+\eta_j^{(t)}\right)\left(\min \left\{\Im{m_i^{(t)}},  \Im{m_j^{(t)}}\right\}^2\right)\right]^2},\\
        &|U_i^{(t)}| \lesssim \frac{1}{\big(\eta_i+(\Im{m_i^{(t)}})^2\big)^2},\\
        &|N_{1i}| 
        \lesssim \frac{1}{\big(\eta_i+(\Im{m_i^{(1)}})^2\big)^2\big(\eta_j+(\Im{m_j^{(2)}})^2\big)^2},\\
        &|L_{ji}^{12}|
        \lesssim \frac{1}{\big(\eta_i+(\Im{m_i^{(2)}})^2\big)^2\big(\eta_j+(\Im{m_j^{(1)}})^2\big)^2}\big|\Re{(\overline{z_i}z_j)}\Big|,\\
        &|L_{ji}^{21}|
        \lesssim \frac{1}{\big(\eta_i+(\Im{m_i^{(1)}})^2\big)^2\big(\eta_j+(\Im{m_j^{(2)}})^2\big)^2}\big|\Re{(\overline{z_i}z_j)}\Big|.
    \end{aligned}
\end{equation}

\textbf{Estimate of $\partial_{\eta_{i}}m_i^{(t)}:$}
By using the estimate \eqref{eq: beta star is non-negative} and asymptotes from \eqref{eq:beta_*}, we obtain the following estimate;
\begin{align*}
    \big|\partial_{\eta_i}(m_i^{(t)})\big| &= \Big|\frac{1-\beta_i^{(t)}}{\beta_i^{(t)}}\Big|\leq \frac{1}{|\beta_i^{(t)}|}= \frac{1}{|(\beta_*^{(t)})_i+2(\Im{m_i^{(t)}})^2|}\\
    &= \frac{1}{|\frac{\eta_i}{\Im{m_i^{(t)}}+\eta_i}+2(\Im{m_i^{(t)}})^2|}\lesssim \frac{1}{\big(\eta_i+(\Im{m_i^{(t)}})^2\big)^2},
\end{align*}
where the last inequality follows from the fact that $\eta_i/(\eta_i+\Im{m_i^{(t)}})\geq \eta_i/(1+\eta_{i})\gtrsim \eta_{i}.$

\textbf{Estimate of $\partial_{\eta_{i}}u_{i}^{(t)}:$}
Differentiating both sides of \eqref{eqn: definition of u_i on imaginary omega} with respect to $\eta_{i},$ we obtain
\begin{align*}
    \partial_{\eta_{i}}(u_{i}^{(t)})=-\iota\frac{\eta_{i}\partial_{\eta_{i}}(\Im m_{i}^{(t)})-\Im m_{i}^{(t)}}{(\eta_{i}+\Im m_{i}^{(t)})^{2}}.
\end{align*}
Using \eqref{eq: beta star is non-negative}, we have $|\Im m_{i}^{(t)}| < 1.$ Moreover, using \eqref{eq:m^t_derivative} in conjunction with the estimates \eqref{eq:Im{m^t}} and \eqref{eq:beta_*} we obtain $|\eta_{i}\partial_{\eta_{i}}\Im m_{i}^{(t)}|<1,$ which gives us
\begin{align*}
    |\partial_{\eta_{i}}u_{i}^{(t)}|\lesssim [\Im m_{i}^{(t)}+\eta_{i}]^{-2}.
\end{align*}

\textbf{Estimate of $V_{ij}^{(t)}:$} Using the definition of $u_i^{(t)}$ from \eqref{eq:M^z matrix} and the equation \eqref{eq:MDS}, we notice that $-(m_i^{(t)})^2+|z_i|^2 (u_i^{(t)})^2=u_i^{(t)}.$ Consequently, we may rewrite $V_{ij}^{(t)},$ whose expression is given in \eqref{eq:hat_V_{ij}^{(t)}}, as follows;
\begin{align*}
    &V_{ij}^{(t)}\\
    &=\frac{1}{2} \partial_{\eta_i} \partial_{\eta_j} \log \Big(1-u_i^{(t)} u_j^{(t)}\big(1-|z_i-z_{j}|^2+(1-u_i^{(t)})|z_i|^2+(1-u_j^{(t)} )|z_j|^2\big)\Big)\\
    &=\frac{1}{2}\frac{(\partial_{\eta_{i}}u_{i}^{(t)})(\partial_{\eta_{j}} u_{j}^{(t)})C(u_{i}^{(t)}, u_{j}^{(t)}, z_{i}, z_{j})}{\Big(1-u_i^{(t)} u_j^{(t)}\big(1-|z_i-z_{j}|^2+(1-u_i^{(t)})|z_i|^2+(1-u_j^{(t)} )|z_j|^2\big)\Big)^{2}},
\end{align*}
where $C(u_{i}^{(t)}, u_{j}^{(t)}, z_{i}, z_{j})$ is a function of $u_{i}^{(t)}, u_{j}^{(t)}, z_{i}, z_{j},$ which is uniformly bounded by a constant for all $u_{i}^{(t)}, u_{j}^{(t)}, z_{i}, z_{j}.$ 

We now just need to calculate a lower bound of the denominator. Using $-(m_i^{(t)})^2+|z_i|^2 (u_i^{(t)})^2=u_i^{(t)}$ once again, we shall obtain \eqref{eqn: alternative expression of V_ij in section 5}, where the main factor to be analyzed is $1-u_{i}^{(t)}.$ Now, on the imaginary axis i.e., $\omega_{i} = \iota \eta_{i},$ we have $m^{(t)}(\iota\eta_i) = \iota\Im{m^{(t)}}(\iota\eta_i).$ It should be noted that we are working on the regime where $\eta_{i}\in [\eta_{c}, T]$ and $\eta_{c}= n^{-1+\delta_{1}} > 0.$ Since $\eta_{i}\Im m_{i}^{(t)} > 0,$ we have $\Im m_{i}^{(t)} > 0.$ Moreover, we also have $\Im m_{i}^{(t)} < 1,$ which was noted down in the estimate of $\partial_{\eta_{i}} u_{i}^{(t)}$ as well.  Consequently, $1-u_i^{(t)}=\eta_i/(\eta_i+\Im{m_i^{(t)}})\geq \eta_i/(1+\eta_{i}),$ which implies that $\eta_{i}\lesssim 1- u_{i}^{(t)} < 1.$ Combining all, we obtain
\begin{align}\label{eqn: alternative expression of V_ij in section 5}
    &\Big|1-u_i^{(t)} u_j^{(t)}\big(1-|z_i-z_{j}|^2+(1-u_i^{(t)})|z_i|^2+(1-u_j^{(t)} )|z_j|^2\big)\big|\\
    &\quad =\big|u_i^{(t)}u_j^{(t)}|z_i-z_j|^2+(1-u_i^{(t)})(1-u_j^{(t)})\notag\\
    &\quad\quad-(m_i^{(t)})^2u_j^{(t)}\Big(\frac{1}{u_i^{(t)}}-1\Big)-(m_j^{(t)})^2u_i^{(t)}\Big(\frac{1}{u_j^{(t)}}-1\Big)\Big|\notag\\
    &\quad \geq u_{i}^{(t)}u_{j}^{(t)}|z_i-z_j|^2+ \min\{(\Im{m_i^{(t)}})^2,(\Im{m_j^{(t)}})^2\}(2-u_i^{(t)}-u_j^{(t)})\notag\\
    &\quad \gtrsim u_{i}^{(t)}u_{j}^{(t)}|z_i-z_j|^2+ \min\{(\Im{m_i^{(t)}})^2,(\Im{m_j^{(t)}})^2\}(\eta_i+\eta_j).\notag
\end{align}
If both $u_{i}^{(t)}, u_{j}^{(t)}$ are bounded away from zero i.e., there exists a $\delta$ such that $u_{i}^{(t)}, u_{j}^{(t)} > \delta,$ then $u_{i}^{(t)}u_{j}^{(t)}|z_i-z_j|^2\gtrsim |z_i-z_j|^2.$ Otherwise, $u_{i}^{(t)}u_{j}^{(t)}|z_i-z_j|^2\geq 0,$ but $2-u_i^{(t)}-u_j^{(t)}\geq 1-\delta.$ So, in the worst case, we have the following bound
\begin{align*}
&\big|1-u_i^{(t)} u_j^{(t)}\big(1-|z_i-z_{j}|^2+(1-u_i^{(t)})|z_i|^2+(1-u_j^{(t)} )|z_j|^2\big)\big|\\
    &\quad\gtrsim|z_i-z_j|^2+ \min\{(\Im{m_i^{(t)}})^2,(\Im{m_j^{(t)}})^2\}(\eta_i+\eta_j).
\end{align*}

\textbf{Estimate of $U_{i}^{(t)}:$} From \eqref{eq: estimates for integrals} using the estimate of $\big|\partial_{\eta_i}(m_i^{(t)})\big|$, we obtain the following estimate for $U_i^{(t)};$
\begin{align*}
    |U_i^{(t)}|
    &= \Big|\frac{i}{\sqrt{2}} \partial_{\eta_i} (m_i^{(t)})^2\Big|
    \lesssim \big|m_i^{(t)}(m_i^{(t)} )^{'} \big|
    \lesssim \frac{1}{\big(\eta_i+(\Im{m_i^{(t)}})^2\big)^2}.
\end{align*}

\textbf{Estimate of $N_{ij}:$} 
\begin{align*}
    |N_{ij}|&=|\partial_{\eta_i}(m_i^{(1)}) \partial_{\eta_j}(m_j^{(2)})+ \partial_{\eta_i}(m_i^{(2)}) \partial_{\eta_j}(m_j^{(1)})|\\
    &\lesssim \frac{1}{\big(\eta_i+(\Im{m_i^{(1)}})^2\big)^2\big(\eta_j+(\Im{m_j^{(2)}})^2\big)^2}.
\end{align*}

\textbf{Estimates of $L_{ji}^{12}$ and $L_{ji}^{21}:$}
\begin{align*}
    |L_{ji}^{12}| &=\Big|\frac{ m_j^{(1)}m_i^{(2)} \big(1-2(m_j^{(1)})^2-\beta_j^{(1)}\big)\big(1-2(m_i^{(2)})^2-\beta_i^{(2)}\big)\Re{(\overline{z_i}z_j)}}{\beta_j^{(1)}\beta_i^{(2)}}\Big|\\
    &\lesssim \Big|\frac{ m_j^{(1)}m_i^{(2)} \big(1-\beta_j^{(1)}\big)\big(1-\beta_i^{(2)}\big)\Re{(\overline{z_i}z_j)}}{\beta_j^{(1)}\beta_i^{(2)}}\Big|\\
    &\lesssim \Big| m_j^{(1)}m_i^{(2)} (m_j^{(1)} )^{'}(m_i^{(2)} )^{'}\big)\Re{(\overline{z_i}z_j)}\Big|\\
     &\lesssim \frac{1}{\big(\eta_i+(\Im{m_i^{(2)}})^2\big)^2\big(\eta_j+(\Im{m_j^{(1)}})^2\big)^2}\big|\Re{(\overline{z_i}z_j)}\Big|.
\end{align*}
Similarly, 
\begin{align*}
    |L_{ji}^{21}|
     &\lesssim \frac{1}{\big(\eta_i+(\Im{m_i^{(1)}})^2\big)^2\big(\eta_j+(\Im{m_j^{(2)}})^2\big)^2}\big|\Re{(\overline{z_i}z_j)}\Big|.
\end{align*}

We are now ready to prove the lemma. We first remove the region of singularity 
\begin{align*}
    Z_{i}:=\{|1-| z_i|^2| \leq n^{-2 \nu}\}\bigcup_{j<i}\{z_j:|z_i-z_j| \leq n^{-2 \nu}\}
\end{align*}
from $\mathbb{C},$ for some small $\nu > 0.$ We may now rewrite the integral of \eqref{eq:CLT} as follows;
\begin{align}\label{eq:Removed_.bad_.region}
    & \frac{(-n)^p}{(2 \pi \iota)^p} \prod_{i \in[p]} \int_{Z_i^c} \mathrm{~d}^2 z_i \Delta f_{i}\left(z_i\right) \mathbb{E} \prod_{i \in[p]} \int_{\eta_c}^T\big(c\langle(G^{(1)})^z(\iota \eta_i)-\mathbb{E} (G^{(1)})^z(\iota \eta_i)\rangle  \\
& \quad+d\langle(G^{(2)})^z(\iota \eta_i)-\mathbb{E} (G^{(2)})^z(\iota \eta_i)\rangle \big) \mathrm{d} \eta_i+\mathcal{O}_{\prec}\left(\frac{n^{p \xi}}{n^\nu}\right).\notag
\end{align}
Since the Lebesgue measure of $Z_{i}$ is $\mathcal{O}(n^{-2\nu}),$ the removal of the region $Z_{i}$ introduced an error of $\mathcal{O}_{\prec}\left(n^{p \xi}/n^\nu\right),$ for any very small $\xi>0,$ which is a consequence of \eqref{eqn: bound on J_T, etc}.

Now, by using \eqref{eq: estimates for integrals} we have the following estimates over $Z_{i}^{c}$ in the regime $ \eta_i \in\left[0, \eta_c\right]$, where $\eta_c=n^{-1+\delta_1};$
\begin{align}\label{eq: estimates over Z_icap regimes}
    &|V_{ij}^{(t)}|\lesssim \frac{n^{12\nu}}{(1+\eta_i^2)(1+\eta_j^2)},\notag\\
    & |U_i^{(t)}|\lesssim \frac{n^{4\nu}}{1+\eta_i^2},\notag\\
    &|N_{ij}|\lesssim \frac{n^{8\nu}}{(1+\eta_i^2)(1+\eta_j^2)},\\
    &|L_{ji}^{12}| \lesssim \frac{n^{8\nu}}{(1+\eta_i^2)(1+\eta_j^2)},\notag\\
    &|L_{ji}^{21}| \lesssim \frac{n^{8\nu}}{(1+\eta_i^2)(1+\eta_j^2)}.\notag
\end{align}

Finally, using \eqref{eq:CLT_.For_.Resolvent} in \eqref{eq:Removed_.bad_.region}, we obtain the integrand displayed in the last equality of \eqref{eq:CLT}. However, the domain of the integration in \eqref{eq:Removed_.bad_.region} is not same as \eqref{eq:CLT}. We can add back the domains $(0, \eta_{c})$ and $(T, \infty)$ using the estimates \eqref{eq: estimates over Z_icap regimes} and the first estimate in \eqref{eqn: bound on J_T, etc}. Lastly, the error $\Psi$ from \eqref{eq:Psi_error_term} is bounded by $\mathcal{O}(n^{\xi p+2p\nu-\delta_{1}/2})$ on $\cap _{i}Z_{i}^{c}.$ This concludes the lemma and yields the following equation

\begin{equation}\label{eq:(4.24)}
    \begin{aligned}
        &\frac{1}{(2 \pi \iota)^p} \prod_{i \in[p]} \int_{Z_i^c \cap \widehat{Z_i^c}} \mathrm{~d}^2 z_i \Delta f_{i}\\
& \times \sum_{P \in \Pi_p\{i, j) \in P} \int_0^{\infty}  \int_0^{\infty}-\Bigg(\frac{c^2\widehat{V_{ij}^{(1)}}+d^2\widehat{V_{ij}^{(2)}} +8cd\gamma (L_{ji}^{12}+L_{ji}^{21})+2cd\rho  N_{ij} }{8\pi^2}\\
&+ \frac{c^2(\kappa_4)_{11}U_i^{(1)}U_j^{(1)}+d^2(\kappa_4)_{22} U_i^{(2)}U_j^{(2)}+cd(\kappa_4)_{12} \big(U_i^{(1)}U_j^{(2)}+U_j^{(1)}U_i^{(2)}\big)}{8\pi^2}  \Bigg)\mathrm{~d} \eta_i \mathrm{~d} \eta_j\\
&+\mathcal{O}_{\prec}\left(\frac{n^{p \xi}}{n^\nu}+\frac{n^{12 \nu p+\delta_1}}{n}+\frac{n^{\xi p+2 p \nu}}{n^{\delta_1 / 2}}\right).
    \end{aligned}
\end{equation}
Based on $\delta_{1},$ we can choose $\nu$ in such a way that the above error vanishes asymptotically.
\end{proof}


\section{Central limit theorem for resolvents}\label{sec: clt for resolvents}

In this section, we shall present the the proof of the CLT for the resolvents, which is stated in the Theorem \ref{Thm:CLT for resolvents}. Recall that we are considering the resolvents $G_{i}^{(t)}=(G^{(t)})^{z_{i}}(\iota\eta_{i}),$ $1\leq i\leq p$ at $p$ different points on the imaginary axis. In addition, recall that $\Upsilon_i:=c\langle G^{(1)}_i-\mathbb{E}(G^{(1)}_i)\rangle +d\langle G^{(2)}_i-\mathbb{E}(G^{(2)}_i)\rangle.$ Before proceeding to the computation of $\mathbb{E}\left[\prod_{i=1}^{p}\Upsilon_{i}\right],$ we give an estimate of $\mathbb{E}\big[\big(c(G^{(1)})^z+d(G^{(2)})^z\big)(\iota\eta)\big]$. From the local law, we get the first order approximation $\mathbb{E}\big[\langle cG^{(1)}+dG^{(2)}\rangle\big]\sim  \langle cM^{(1)}+dM^{(2)}\rangle.$ The second order correction of $\mathbb{E}\big[\langle cG^{(1)}+dG^{(2)}\rangle\big]$ is of the order of $1/n.$ This is formalized in the following lemma.

\begin{lemma}\label{Lemma:Exp or resolvent}
   For $(\kappa_{4})_{tt} = \kappa\big(\chi^{(t)},\bar{\chi}^{(t)},\chi^{(t)},\bar{\chi}^{(t)}\big) \neq 0,$ where $\chi^{(t)}$ is same as defined in Condition \ref{Condtion::}, we have
    
    \begin{align}
        \mathbb{E}\langle cG^{(1)}+dG^{(2)}\rangle = \langle cM^{(1)} +dM^{(2)} \rangle +c\xi^{(1)}+d\xi^{(2)} + Error_{exp},
    \end{align}
where 
\begin{align*}\notag
    &\xi^{(t)}\\
    =&  \left\{\begin{array}{l} -\frac{i (\kappa_4)_{tt}}{4n} \partial_{\eta}\big((m^{(t)})^4\big),\;  \text { when $X^{(t)}$ in Condition \ref{Condtion::} is $\mathbb{C}$-valued,}  \\ 
    \frac{i}{4n}\partial_{\eta}\Big(1-(u^{(t)})^2+2(u^{(t)})^3|z|^2-(u^{(t)})^2(z^2+\bar{z}^2)\Big) \\
   -\frac{i (\kappa_4)_{tt}}{4n} \partial_{\eta}\big((m^{(t)})^4\big),\;  \text { when $X^{(t)}$ in Condition \ref{Condtion::} is $\mathbb{R}$-valued,}\end{array} \right.
\end{align*}
and
\begin{align*}\notag
    &Error_{exp}\\
    =&  \left\{\begin{array}{l} \mathcal{O}_\prec\Big(\frac{1}{|1-|z||}\big(\frac{1}{n^{3/2}\eta}+\frac{1}{(n\eta)^2}\big)\Big),\;  \text { when $X^{(t)}$ is $\mathbb{C}$-valued,}  \\ 
    \mathcal{O}_\prec\Big(\big(\frac{1}{|1-|z||}+\frac{1}{\Im{z}^2}\big)\big(\frac{1}{n^{3/2}\eta}+\frac{1}{(n\eta)^2}\big)\Big), \text {when $X^{(t)}$ is $\mathbb{R}$-valued.}\end{array} \right.
\end{align*}
\end{lemma}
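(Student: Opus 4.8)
The plan is to compute the second-order correction to $\mathbb{E}\langle cG^{(1)}+dG^{(2)}\rangle$ by a cumulant expansion of the self-renormalized identity \eqref{eq:deviation}. Starting from
$$G_i^{(t)} = M_i^{(t)} - M_i^{(t)}\underline{W^{(t)}G_i^{(t)}} + M_i^{(t)}\mathcal{S}[G_i^{(t)}-M_i^{(t)}]G_i^{(t)},$$
I would take the normalized trace, pair it against the deterministic direction, and then take expectation. The leading term gives $\langle cM^{(1)}+dM^{(2)}\rangle$; the self-renormalized term $\mathbb{E}\langle M^{(t)}\underline{W^{(t)}G^{(t)}}\rangle$ is where the genuine work lies, and the third term is handled by the local law (Theorem \ref{Thm: Local_Law}), since $\langle \mathcal{S}[G^{(t)}-M^{(t)}] G^{(t)}\rangle$ is already $\mathcal{O}_\prec((n\eta)^{-1})$ after one more expansion, contributing only to $Error_{exp}$. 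The point of the self-renormalization $\underline{\,\cdot\,}$ is that it removes the second cumulant (Gaussian) contribution, so the first surviving term in the cumulant expansion of $\mathbb{E}[\,\cdot\,\underline{W^{(t)}G^{(t)}}\,]$ is the third-order one (which vanishes under Condition \ref{Condtion::}(i) since the entries have the matching symmetry, or is lower order), and then the fourth-order cumulant term, which produces precisely the $(\kappa_4)_{tt}$ contribution.

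Concretely, I would expand $\mathbb{E}\big[F(W^{(t)})\,\underline{w^{(t)}_{ab}}\big]$ entrywise via the cumulant expansion formula: for each matrix entry $x^{(t)}_{ab}$,
$$\mathbb{E}\big[x^{(t)}_{ab}\,f(X^{(t)})\big] = \sum_{k\ge 1}\frac{1}{k!}\sum_{(*)} \kappa_{k+1}\big(x^{(t)}_{ab},\ldots\big)\,\mathbb{E}\big[\partial^{(*)} f\big],$$
where the $k=1$ term is exactly cancelled by the counterterm in $\underline{\,\cdot\,}$. The $k=3$ term carries the fourth cumulant $\kappa(x^{(t)}_{ab},\bar x^{(t)}_{ab},x^{(t)}_{ab},\bar x^{(t)}_{ab}) = n^{-2}(\kappa_4)_{tt}$; since we are differentiating $\langle G^{(t)}\rangle$ three times in the direction of a single rank-one matrix $E_{ab}$ and summing over $a,b$, each differentiation produces a resolvent factor, and after replacing every $G^{(t)}$ by $M^{(t)}$ (paying $\mathcal{O}_\prec((n\eta)^{-1})$ errors via the local law and the product-resolvent local law Theorem \ref{Thm: Local law combined}) one gets a deterministic expression in $m^{(t)}$, $u^{(t)}$, $z$. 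The combinatorics of which entries of $M^{(t)}$ survive — using that $M^{(t)}$ has the block structure \eqref{eq:M^z matrix} with constant diagonal blocks $m^{(t)}\mathbb{I}$ and off-diagonal blocks $-zu^{(t)}\mathbb{I}$, $-\bar z u^{(t)}\mathbb{I}$ — collapses the sum over $a,b$ and, after recognizing $\partial_\omega(m^{(t)})^2$-type derivatives, yields $-\tfrac{i(\kappa_4)_{tt}}{4n}\partial_\eta\big((m^{(t)})^4\big)$. In the real-valued case there is an additional contribution because $\mathbb{E}[(\chi^{(t)})^2]=\gamma_1=1\ne 0$, so the second cumulant (after self-renormalization, the "off-diagonal" second-moment term that is not captured by $\mathcal{S}$) survives and produces the extra $\tfrac{i}{4n}\partial_\eta\big(1-(u^{(t)})^2+2(u^{(t)})^3|z|^2-(u^{(t)})^2(z^2+\bar z^2)\big)$ term; this is a parallel but slightly more involved bookkeeping exercise, identical in spirit to \cite[Proposition 3.3]{cipolloni2021fluctuation}.

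For the error term, I would truncate the cumulant expansion at order $k=4$: the tail $k\ge 5$ terms are bounded by moment conditions (Condition \ref{Condtion::}(iii)) and the local law, giving $\mathcal{O}_\prec\big((n\eta)^{-2}|1-|z||^{-1}\big)$; the errors incurred by replacing $G^{(t)}\to M^{(t)}$ inside the $k=3$ term are $\mathcal{O}_\prec\big(n^{-3/2}\eta^{-1}|1-|z||^{-1}\big)$ because each such replacement in an averaged quantity costs $(n\eta)^{-1}$ and the combinatorial factor $n^{-2}$ against the $O(n^2)$-fold sum over $a,b$ leaves the stated power; the $|1-|z||^{-1}$ factor comes from the size of $|\beta^{(t)}|^{-1}$ via \eqref{eqn: lower bound on beta} entering each resolvent-derivative factor, and in the real case the extra $\Im z^{-2}$ from the stability operator bound for $M^{(t)}_B$ when the two spectral parameters are complex-conjugate (Lemma \ref{Lemma:bound on beta hat}). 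The main obstacle I anticipate is the real-valued bookkeeping: keeping track of which of the many index-pairings in the $k=3$ cumulant term survive the trace and the rank-one structure, and correctly assembling them into the compact $\partial_\eta$-derivative form, while simultaneously verifying that the apparently dangerous intermediate terms of size $n^{-1}\eta^{-O(1)}$ cancel or telescope. This is essentially a careful reprise of the arguments in \cite{cipolloni2023central, cipolloni2021fluctuation}, adapted to the block matrix $X$, and since $G^{(1)}$ and $G^{(2)}$ appear here only through a linear combination (no product of the two), the correlation parameters $\gamma,\rho$ do not yet enter — they will only appear in Theorem \ref{Thm:CLT for resolvents}.
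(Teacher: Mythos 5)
Your proposal follows the same overall route as the paper — cumulant-expand the renormalized identity \eqref{eq:deviation}, observe that the Gaussian counterterm cancels the $k=1$, $\kappa(w_{ba},w_{ab})=1/n$ contribution, isolate the $k=3$ term as the source of the $(\kappa_4)_{tt}$ piece, and pick up the extra term in the real case from $\kappa(w_{ba},w_{ba})=\gamma_1/n$. However, there is a genuine gap in your treatment of the third term of \eqref{eq:deviation}. You claim $\langle\mathcal{S}[G^{(t)}-M^{(t)}]G^{(t)}\rangle$ is already $\mathcal{O}_\prec((n\eta)^{-1})$ after one more expansion and so contributes only to $Error_{exp}$. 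But after writing $G^{(t)}=M^{(t)}+(G^{(t)}-M^{(t)})$, the piece $\langle M^{(t)}\mathcal{S}[G^{(t)}-M^{(t)}]M^{(t)}\rangle$ is a bounded linear functional of the block averages of $G^{(t)}-M^{(t)}$, hence of the \emph{same} order as the quantity you are trying to compute — and $(n\eta)^{-1}$ does not fit inside $Error_{exp}=\mathcal{O}_\prec(n^{-3/2}\eta^{-1}+(n\eta)^{-2})$. This term cannot be discarded; it must be moved to the left-hand side and the one-body stability operator $\mathcal{B}^{(t)}=\mathbb{I}-M^{(t)}\mathcal{S}[\cdot]M^{(t)}$ inverted. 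The paper does this by introducing $A^{(t)}=\big(((\mathcal{B}^{(t)})^*)^{-1}[\mathbb{I}]\big)^*M^{(t)}$ and expanding $\mathbb{E}\langle\underline{W^{(t)}G^{(t)}}A^{(t)}\rangle$ rather than $\mathbb{E}\langle M^{(t)}\underline{W^{(t)}G^{(t)}}\rangle$; the resulting factor $\langle M^{(t)}A^{(t)}\rangle=-i\partial_\eta m^{(t)}$ is precisely what produces the $\partial_\eta$ structure in $\xi^{(t)}$, so without this step you will not reach the stated formula. Your phrase ``pair it against the deterministic direction'' hints at this but never actually performs the inversion, and your explicit error claim contradicts it.

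Two smaller points. Your parenthetical that the third-cumulant ($k=2$) terms ``vanish under Condition \ref{Condtion::}(i)'' is incorrect — Condition \ref{Condtion::}(i) constrains only the first and second moments, so third cumulants are in general nonzero; the paper's \eqref{eq:exp_local_law} shows they are merely lower order, of size $\mathcal{O}_\prec\big(|\beta^{(t)}|^{-1}(n^{-3/2}(1+\eta)^{-1}+n^{-2}\eta^{-3/2})\big)$, which is the part of your hedge that is right. Second, the real-case derivation of the $\frac{i}{4n}\partial_\eta(\cdots)$ term is not just bookkeeping off to the side: it is the $k=1$, $\kappa(w_{ba},w_{ba})=\gamma_1/n$ contribution $F_1^{(t)}$, and its evaluation requires the two-body local law Theorem \ref{Thm: Local law combined} applied to $G^{(t)}A^{(t)}E G^{(t)\,t}E^\dag$ with the $\bar z$ spectral parameter on the transposed resolvent, plus the explicit inversion of the two-body stability operator in \eqref{eq:M_B^t} — again confirming that the $A^{(t)}$ step is load-bearing, not cosmetic.
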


Before proving the lemma, we note down the cumulant expansion as follows \cite[Proposition 7.5]{speicher2023high}.
\begin{result}[Cumulant Expansion]\label{Result:Cumulant Expansion}
\begin{enumerate}
    \item For a general scalar random variable $ X $, the cumulant expansion is given as follows;
$$
\mathbb{E}[Xf(X)] = \sum_{l}\frac{\kappa_{l+1}(X)}{l!}\mathbb{E}[f^{(l)}(X)],
$$
where $\kappa_{l+1}(X)$ represents the $(l+1)$-th cumulant of $ X $, and $ f^{(l)}$ denotes the $ l $-th derivative of a smooth function $ f $.
\item In the multidimensional case, the cumulant expansion is defined as follows. Consider a collection of random variables $ X_1, X_2, \dots, X_p $, and let $\kappa$ denote their joint cumulants. For a smooth function $ f: \mathbb{R}^p \rightarrow \mathbb{R} $, the cumulant expansions are given by
\begin{align*}
    &\mathbb{E}[X_i f(X_1, X_2, \dots, X_p)]\\
    &\;\;\;= \sum_{l \geq 0} \sum_{i_1, \dots, i_l=1}^p \frac{\kappa(X_i, X_{i_1}, \dots, X_{i_l})}{l!} \mathbb{E}[\partial_{i_1}\partial_{i_2} \cdots \partial_{i_l} f(X_1, X_2, \dots, X_p)],
\end{align*}
where $\partial_{i_1}\partial_{i_2} \cdots \partial_{i_l}$ represents the partial derivatives with respect to $ X_{i_1}, X_{i_2}, \dots, X_{i_l} $.
\end{enumerate}
\end{result}


\begin{proof}[Proof of Lemma \ref{Lemma:Exp or resolvent}]
    Using the stability operator $\mathcal{B}^{(t)}$ from \eqref{eqn: one body stability definition}, let us define $A^{(t)}:=\left(((\mathcal{B}^{(t)})^*)^{-1}[\mathbb{I}]\right)^* M^{(t)}.$ Now using \eqref{eq:deviation}, we have

\begin{align}\label{eq:G^t-M^t}
    &\langle cG^{(1)}+dG^{(2)}-cM^{(1)}-dM^{(2)}\rangle\\
    &=c\langle G^{(1)}-M^{(1)}\rangle + d\langle G^{(2)}-M^{(2)}\rangle\notag\\
    &=c\langle \mathbb{I},(\mathcal{B}^{(1)})^{-1}\mathcal{B}^{(1)}[G^{(1)}- M^{(1)}]\rangle+d\langle \mathbb{I},(\mathcal{B}^{(2)})^{-1}\mathcal{B}^{(2)}[G^{(2)}- M^{(2)}]\rangle\notag\\
    &=-c\langle (M^{(1)})^*\big((\mathcal{B}^{(1)})^*\big)^{-1}[\mathbb{I}], \underline{W^{(1)}G^{(1)}}\rangle\notag\\
    &\;\;\;+c\langle (M^{(1)})^*\big((\mathcal{B}^{(1)})^*\big)^{-1}[\mathbb{I}], \mathcal{S}[G^{(1)}-M^{(1)}](G^{(1)}-M^{(1)})\rangle\notag\\
    &\;\;\;-d\langle (M^{(2)})^*\big((\mathcal{B}^{(2)})^*\big)^{-1}[\mathbb{I}], \underline{W^{(2)}G^{(2)}}\rangle\notag\\
    &\;\;\;+d\langle (M^{(2)})^*\big((\mathcal{B}^{(2)})^*\big)^{-1}[\mathbb{I}], \mathcal{S}[G^{(2)}-M^{(2)}](G^{(2)}-M^{(2)})\rangle\notag\\
    &=-c\langle (M^{(1)})^*\big((\mathcal{B}^{(1)})^*\big)^{-1}[\mathbb{I}], \underline{W^{(1)}G^{(1)}}\rangle + \mathcal{O}_\prec\bigg(\frac{\norm{((\mathcal{B}^{(1)})^*)^{-1}[\mathbb{I}]}}{(n\eta)^2}\bigg)\notag\\
    &\;\;\;-d\langle (M^{(2)})^*\big((\mathcal{B}^{(2)})^*\big)^{-1}[\mathbb{I}], \underline{W^{(2)}G^{(2)}}\rangle + \mathcal{O}_\prec\bigg(\frac{\norm{((\mathcal{B}^{(2)})^*)^{-1}[\mathbb{I}]}}{(n\eta)^2}\bigg)\notag\\
    &=-\langle c\underline{W^{(1)}G^{(1)}}A^{(1)}+d\underline{W^{(2)}G^{(2)}}A^{(2)} \rangle + \mathcal{O}_\prec\bigg(\frac{\norm{((\mathcal{B}^{(1)})^*)^{-1}[\mathbb{I}]}}{(n\eta)^2}\bigg)\\
    &\quad+\mathcal{O}_\prec\bigg(\frac{\norm{((\mathcal{B}^{(2)})^*)^{-1}[\mathbb{I}]}}{(n\eta)^2}\bigg).\notag
\end{align}

Now, we use the cumulant expansion from the Result \ref{Result:Cumulant Expansion} to evaluate $\mathbb{E}\left[\langle \underline{W^{(t)}G^{(t)}}A^{(t)} \rangle\right].$ This will lead us to calculate the derivatives of $G^{(t)}A^{(t)}$ with respect to the entries of $W^{(t)}.$ This calculation is demonstrated below, where for brevity we use the notation $\Delta^{ba}:=\boldsymbol{e}_{b}^{t}\boldsymbol{e}_{a},$ which is essentially a $2n\times 2n$ matrix with all zero entries except the $(ba)$th entry being equal to $1.$ Here, we introduce a shorthand notation for joint cumulants of the entries of the matrices $W^{(1)}$ and $W^{(2)}$. We shall denote the joint cumulant $\kappa(w_{ab}^{(t)}, w_{ij}^{(s)}, \ldots)$ as $\kappa((ab)_{t}, (ij)_{s}, \ldots),$ where $t, s\in \{1, 2\}.$ 

For $\alpha = (\alpha_1, \ldots, \alpha_k)$, we note that $\kappa((ba)_t, \alpha)$ is non-zero only for $\alpha\in \{{(ab)_t, (ba)_t}\}^k$. The derivative $\partial_\alpha$ denotes the derivative with respect to $w_{\alpha_1}^{(t)}, w_{\alpha_2}^{(t)}, \ldots, w_{\alpha_k}^{(t)}$.
\begin{align}\label{eqn: WGA in terms of S_1, S_2}
    &\mathbb{E}\left[\langle \underline{W^{(t)}G^{(t)}}A^{(t)} \rangle\right]\\
    &=\mathbb{E}\big[\langle W^{(t)}G^{(t)}A^{(t)}\rangle\big]+ \mathbb{E}\big[\langle \mathcal{S}[G^{(t)}]G^{(t)}A^{(t)}\rangle\big]\notag\\
    &=\mathbb{E}\Big[ \sum_{ab}w^{(t)}_{ba}\langle\Delta^{ba}G^{(t)}A^{(t)}\rangle\Big]+ \mathbb{E}\big[\langle \mathcal{S}[G^{(t)}]G^{(t)}A^{(t)}\rangle\big]\notag\\
    &=\sum_{k\geq 0}\sum_{ab} \sum_{\alpha\in \{(ab)_t,(ba)_t\}^k} \frac{\kappa((ba)_t, \alpha)}{k!} \mathbb{E}\left[\partial_{\alpha}\langle \Delta^{ba}G^{(t)}A^{(t)}\rangle\right]+ \mathbb{E}\big[\langle \mathcal{S}[G^{(t)}]G^{(t)}A^{(t)} \rangle\big]\notag\\
    &=\sum_{ab}\frac{\kappa((ba)_t, (ab)_t)}{1!} \mathbb{E}\left[\partial_{(ab)_t}\langle \Delta^{ba}G^{(t)}A^{(t)}\rangle\right]\notag\\
    &\quad+\sum_{ab} \frac{\kappa((ba)_t, (ba)_t)}{1!}\mathbb{E}\left[\partial_{(ba)_t}\langle \Delta^{ba}G^{(t)} A^{(t)}\rangle\right]+ \mathbb{E}\big[\langle \mathcal{S}[G^{(t)}]G^{(t)}A^{(t)} \rangle\big]\notag\\
    &\;\;\;\;\;+\sum_{k\geq 2} \sum_{ab} \sum_{\alpha\in \{(ab)_t,(ba)_t\}^k} \frac{\kappa((ba)_t, \alpha)}{k!} \mathbb{E}\left[\partial_{\alpha}\langle \Delta^{ba}G^{(t)}A^{(t)}\rangle\right]\notag\\
    &=-\frac{1}{2n^{2}}{\sum_{ab}}' \mathbb{E}\left[G^{(t)}_{aa}(G^{(t)}A^{(t)})_{bb}\right]
   +\sum_{ab} \frac{\kappa((ba)_t, (ba)_t)}{1!} \mathbb{E}\left[\partial_{(ba)_t}\langle \Delta^{ba}G^{(t)} A^{(t)}\rangle\right]\notag\\
    &\;\;\;\;\;+\sum_{k\geq 2} \sum_{ab} \sum_{\alpha\in \{(ab)_t,(ba)_t\}^k} \frac{\kappa((ba)_t, \alpha)}{k!} \mathbb{E}\left[\partial_{\alpha}\langle \Delta^{ba}G^{(t)}A^{(t)}\rangle\right]\notag\\
    &\;\;\;\;\;+ \frac{1}{2n^{2}}{\sum_{ab}}' \mathbb{E}\left[G^{(t)}_{aa}(G^{(t)}A^{(t)})_{bb}\right]\notag\\
    &=\sum_{ab} \frac{\kappa((ba)_t, (ba)_t)}{1!} \mathbb{E}\left[\partial_{(ba)_t}\langle \Delta^{ba}G^{(t)} A^{(t)}\rangle\right]\notag\\
    &\;\;\;\;\;+\sum_{k\geq 2} \sum_{ab} \sum_{\alpha\in \{(ab)_t,(ba)_t\}^k} \frac{\kappa((ba)_t, \alpha)}{k!} \mathbb{E}\left[\partial_{\alpha}\langle \Delta^{ba}G^{(t)}A^{(t)}\rangle\right]\notag\\
    &=-\frac{1}{n}{\sum_{ab}}' \mathbb{E}\left[\langle \Delta^{ba}G^{(t)}\Delta^{ba}G^{(t)}A^{(t)}\rangle\right]\notag\\
    &\;\;\;\;\;+\sum_{k\geq 2} \sum_{ab} \sum_{\alpha\in \{(ab)_t,(ba)_t\}^k} \frac{\kappa((ba)_t, \alpha)}{k!} \mathbb{E}\left[\partial_{\alpha}\langle \Delta^{ba}G^{(t)}A^{(t)}\rangle\right]\notag\\
    &=:-\frac{1}{n}F_1^{(t)}+F_{k\geq2}^{(t)},\notag
\end{align}
where, the summation notation ${\sum'_{ab}}$ is defined as 
\begin{align*}
    {\sum_{ab}}' = \sum_{a\leq n} \sum_{b> n} +\sum_{a >n} \sum_{b \leq n},
\end{align*}
which is not leaving out any terms of the double sum $\sum_{ab},$ though it seems so. This is a simple consequence of the structure of $W^{(t)}=\left[\begin{array}{cc}
    0 & X^{(t)} \\
    X^{(t)*} & 0
\end{array}\right]_{2n\times 2n}.$

Moreover, according the scaling and the definition of the cumulants, we notice that 
\begin{align*}
    \kappa((ba)_t, (ba)_t)=\bigg\{
\begin{array}{cc}
    0 & \text{if $X^{(t)}$ is $\mathbb{C}$-valued} \\
   \frac{1}{n}  & \text{if $X^{(t)}$ is $\mathbb{R}$-valued,}
\end{array}
\end{align*}
and
\begin{align}\label{eqn: cumulant estimate}
    |\kappa(\alpha_1,\alpha_2,\dots \alpha_k)| \lesssim \frac{1}{n^{k/2}}.
\end{align}

Before evaluating the terms $F_1^{(t)}, F_{k\geq2}^{(t)}$, let us define $2n \times 2n$ ordered block matrices $E_1$ and $E_2$ as follows;

\begin{align}\label{eqn: dfn of E_1 and E_2}
    E_1 =\left(\begin{array}{cc}
  \mathbb{I} &  0\\
  0 & 0 
\end{array} \right),
\text{ and }
E_2 =\left(\begin{array}{cc}
  0 &  0\\
  0 & \mathbb{I}
\end{array} \right),
\end{align}
where $\mathbb{I}$ is $n \times n$ identity matrix. We shall follow the convention $$\langle AE_1BE_2+AE_2BE_1 \rangle = :\langle AEBE^{\dag} \rangle,$$ for any $2n \times 2n$ matrices $A$ and $B$. 
Now, let us move to evaluate the terms $F_1^{(t)}, F_{k\geq2}^{(t)}.$

We shall use the convention $((G^{(t)})^z)^t = (G^{(t)})^{\bar{z}}$, along with Theorem \ref{Thm: Local law combined} (the local law for the product of resolvents), and the bound on $|(\hat{\beta}^{(t)})_*|$ from Lemma \ref{Lemma:bound on beta hat} to obtain the following estimate;
\begin{align}\label{eq:estimate_for_S_1^t}
     &\Big|\langle (G^{(t)})^{z}A^{(t)}E(G^{(t)})^{\bar{z}} E^{\dag}- (M^{(t)})_{A^{(t)}E}^{z,\bar{z}} E^{\dag} \rangle\Big|\notag\\
     &\;\;\;\;\;\lesssim \frac{C_{\epsilon} \|A^{(t)}E\|\|E^{\dag}\|n^{\xi}}{n \eta^2  |z-\bar{z}|^2} \bigg(\eta^{1/12} +\frac{\eta^{1 / 4}}{|z-\bar{z}|^2}+\frac{1}{\sqrt{n \eta}}+\frac{1}{\left(|z-\bar{z}|^2| n \eta\right)^{1 / 4}}\bigg)\notag\\
     &=\mathcal{O}_\prec\bigg(\frac{1}{|z-\bar{z}|^2|\beta^{(t)}|(n\eta)^2}\bigg).
\end{align}

\textbf{Estimate of $F_{1}^{(t)}$:} Using \eqref{eq:estimate_for_S_1^t} in the fourth equality and \eqref{eq:M_B^t} in the the fifth equality in the following calculation, we conclude 
\begin{align}\label{eq:F^{(t)}}
    &F_1^{(t)}\\
    &={\sum_{ab}}' \mathbb{E}\left[\langle \Delta^{ba}G^{(t)}\Delta^{ba}G^{(t)}A^{(t)}\rangle\right]\notag\\
        &= \mathbb{E}\left[\langle G^{(t)}A^{(t)}E(G^{(t)})^tE^{\dag}\rangle  \right]\notag\\
        &= \mathbb{E}\left[\langle (G^{(t)})^{z}A^{(t)}E(G^{(t)})^{\bar{z}}E^{\dag}\rangle  \right]\notag\\
        &= \langle (M^{(t)})_{A^{(t)}E}^{z,\bar{z}}E^{\dag}\rangle + \mathcal{O}_\prec\bigg(\frac{1}{|z-\bar{z}|^2|\beta^{(t)}|(n\eta)^2}\bigg)\notag\\
        &= \langle \big(\mathbb{I}-(M^{(t)})^{z}\mathcal{S}[\cdot](M^{(t)})^{(\bar{z})}\big)^{-1}[(M^{(t)})^{z}A^{(t)} E(M^{(t)})^{\bar{z}}] E^\dag\rangle\notag\\
        &\;\;\;\;\;+\mathcal{O}_\prec\bigg(\frac{1}{|z-\bar{z}|^2|\beta^{(t)}|(n\eta)^2}\bigg)\notag\\
        &=m^{(t)}\frac{(m^{(t)})^4+(m^{(t)})^2(u^{(t)})^2|z|^2-2(u^{(t)})^4|z|^4+2(u^{(t)})^2(x^2-y^2)}{(1-(m^{(t)})^2-(u^{(t)})^2|z|^2)(1+(u^{(t)})^4|z|^4-(m^{(t)})^4-2(u^{(t)})^2(x^2-y^2))}\notag\\
        &\;\;\;\;\;+ \mathcal{O}_\prec\bigg(\frac{1}{|z-\bar{z}|^2|\beta^{(t)}|(n\eta)^2}\bigg)\notag\\
        &=-\frac{i(u^{(t)})^{\prime}}{2}\frac{u^{(t)}-3|z|^2(u^{(t)})^2+2u^{(t)}(x^2-y^2)}{1-(u^{(t)})^2+2(u^{(t)})^3|z|^2-2(u^{(t)})^2(x^2-y^2)}\notag\\
        &\;\;\;\;\;+ \mathcal{O}_\prec\bigg(\frac{1}{|z-\bar{z}|^2|\beta^{(t)}|(n\eta)^2}\bigg)\notag\\
        &=\frac{i}{4}\partial_{\eta}\log\Big(1-(u^{(t)})^2+(u^{(t)})^3|z|^2-(u^{(t)})^2(z^2+\bar{z}^2)\Big)\notag\\
        &\;\;\;\;\;+ \mathcal{O}_\prec\bigg(\frac{1}{|z-\bar{z}|^2|\beta^{(t)}|(n\eta)^2}\bigg),\notag
    \end{align}
 where $z= x+iy.$ The sixth equality follows from the direct calculation of the inverse operator from the previous step. The second last step follows from the facts $(m^{(t)})^z = (m^{(t)})^{\bar{z}},$ $(u^{(t)})^z = (u^{(t)})^{\bar{z}},$ and the following identities,
 \begin{align*}
      &|z|^2(u^{(t)})^2-(m^{(t)})^2 = u^{(t)},\\
 &(u^{(t)})^\prime=\partial_{\eta}u^{(t)}= \frac{2iu^{(t)}m^{(t)}}{1+u^{(t)}-2|z|^2(u^{(t)})^2}. 
 \end{align*}

\textbf{Estimate of $F_{k\geq 2}^{(t)}:$} Let us first consider the case when $k=2,$ and let $\alpha =(\alpha_1,\alpha_2).$ Then
\begin{align*}
    &\sum_{ab} \sum_{\alpha} \frac{\kappa((ba)_t, \alpha_1, \alpha_2)}{2!} 
    \mathbb{E}\left[\partial_{\alpha_1, \alpha_2} 
    \langle \Delta^{ba} G^{(t)} A^{(t)} \rangle\right] \\
    &\quad = \sum_{\alpha} \frac{\kappa((ba)_t, \alpha_1, \alpha_2)}{2!} 
    {\sum_{ab}}'  \mathbb{E}
    \big[\langle \Delta^{ba} G^{(t)} \Delta^{\alpha_1} G^{(t)} \Delta^{\alpha_2} G^{(t)} A^{(t)} \rangle\big].
\end{align*}
To obtain a significant contribution, we need more diagonal terms than off-diagonal ones (as explained in Remark \ref{rem:leading_contri_discussion}). Here, by parity, at least one $G^{(t)}$ factor in $\Delta^{ba} G^{(t)} \Delta^{\alpha_1} G^{(t)} \Delta^{\alpha_2} G^{(t)} A^{(t)}$ is off-diagonal. For example, taking $(\alpha_{1}, \alpha_{2})= ((ba)_{t}, (ab)_{t})$ and using $G^{(t)}=M^{(t)}+G^{(t)}-M^{(t)}$ along with the local law bound of $G^{(t)}-M^{(t)}$ from Theorem \ref{Thm: Local_Law}, we obtain
\begin{align}\label{eq:exp_local_law}
    &\frac{1}{n^{1+3/2}}{\sum_{ab}}' \mathbb{E}\big[(G^{(t)})_{ab}(G^{(t)})_{aa}(G^{(t)}A^{(t)})_{bb}\big]\\
    \quad=& \frac{1}{n^{5/2}}{\sum_{ab}}' \mathbb{E} \bigg[(G^{(t)})_{ab} \Big(m^{(t)}(M^{(t)}A^{(t)})_{bb}+m^{(t)}\big((G^{(t)}-M^{(t)})A^{(t)}\big)_{bb}\notag\\
    \;\;\;\;\;\;\;\;\;\;&+(M^{(t)}A^{(t)})_{bb} (G^{(t)}-M^{(t)})_{aa}+(G^{(t)}-M^{(t)})_{aa}\big((G^{(t)}-M^{(t)})A^{(t)}\big)_{bb}\Big) \bigg]\notag\\
    \quad=& \frac{1}{n^{5/2}}{\sum_{ab}}' \mathbb{E} \bigg[m^{(t)}(M^{(t)})_{ab}  (M^{(t)}A^{(t)})_{bb}+m^{(t)}(G^{(t)}-M^{(t)})_{ab} (M^{(t)}A^{(t)})_{bb}\notag\\
    \;\;\;\;\;\;\;\;\;\;&+m^{(t)}(M^{(t)})_{ab}\big((G^{(t)}-M^{(t)})A^{(t)}\big)_{bb}+m^{(t)}(G^{(t)}-M^{(t)})_{ab}\big((G^{(t)}-M^{(t)})A^{(t)}\big)_{bb}\notag\\
    \;\;\;\;\;\;\;\;\;\;&+(M^{(t)})_{ab}(M^{(t)}A^{(t)})_{bb} (G^{(t)}-M^{(t)})_{aa}+(G^{(t)}-M^{(t)})_{ab}(M^{(t)}A^{(t)})_{bb} (G^{(t)}-M^{(t)})_{aa}\notag\\
    \;\;\;\;\;\;\;\;\;\;&+(M^{(t)})_{ab}(G^{(t)}-M^{(t)})_{aa}\big((G^{(t)}-M^{(t)})A^{(t)}\big)_{bb}\notag\\
    \;\;\;\;\;\;\;\;\;\;&+(G^{(t)}-M^{(t)})_{ab}(G^{(t)}-M^{(t)})_{aa}\big((G^{(t)}-M^{(t)})A^{(t)}\big)_{bb}\bigg]\notag\\
     \quad= &\frac{1}{n^{5/2}} \mathbb{E} \bigg[ m^{(t)}\big(M^{(t)}A^{(t)}\big)_{(n+1,n+1)}\langle E \boldsymbol{1},M^{(t)}E^{\dag}\boldsymbol{1}\rangle +m^{(t)}(M^{(t)}A^{(t)})_{bb}\langle \boldsymbol{e_a},(G^{(t)}-M^{(t)})\boldsymbol{e_b} \rangle\notag\\
    \;\;\;\;\;\;\;\;\;\;&+m^{(t)}(M^{(t)})_{ab}\big((G^{(t)}-M^{(t)})A^{(t)}\big)_{bb}+m^{(t)}(G^{(t)}-M^{(t)})_{ab}\big((G^{(t)}-M^{(t)})A^{(t)}\big)_{bb}\notag\\
    \;\;\;\;\;\;\;\;\;\;&+(M^{(t)})_{ab}(M^{(t)}A^{(t)})_{bb} (G^{(t)}-M^{(t)})_{aa}+(G^{(t)}-M^{(t)})_{ab}(M^{(t)}A^{(t)})_{bb} (G^{(t)}-M^{(t)})_{aa}\notag\\
    \;\;\;\;\;\;\;\;\;\;&+(M^{(t)})_{ab}(G^{(t)}-M^{(t)})_{aa}\big((G^{(t)}-M^{(t)})A^{(t)}\big)_{bb}\notag\\
    \;\;\;\;\;\;\;\;\;\;&+(G^{(t)}-M^{(t)})_{ab}(G^{(t)}-M^{(t)})_{aa}\big((G^{(t)}-M^{(t)})A^{(t)}\big)_{bb} \bigg]\notag\\
        \quad=&\mathcal{O}_\prec\Bigg(\frac{1}{n^{5/2}}\bigg(\frac{\sqrt{2n}\sqrt{2n}}{|\beta^{(t)}|(1+\eta)}+\frac{1}{|\beta^{(t)}|}\frac{n^2}{\sqrt{n\eta}}+\frac{n}{\eta|\beta^{(t)}|}+\frac{1}{\sqrt{n\eta}}\frac{n}{\eta|\beta^{(t)}|}\notag\\
    \;\;\;\;\;\;\;\;\;\;&+\frac{n}{(1+\eta)}\frac{1}{|\beta^{(t)}|}\frac{1}{\sqrt{n\eta}}+\frac{1}{\sqrt{n\eta}}\frac{1}{|\beta^{(t)}|}\frac{n}{\eta}+\frac{1}{\sqrt{n\eta}}\frac{n}{\eta|\beta^{(t)}|}+\frac{1}{\sqrt{n\eta}}\frac{1}{\sqrt{n\eta}}\frac{n}{\eta|\beta^{(t)}|}\bigg)\Bigg)\notag\\
     \quad= & \mathcal{O}_\prec\bigg(\frac{1}{|\beta^{(t)}|}\Big(\frac{1}{n^{3/2}(1+\eta)}+\frac{1}{n^2\eta^{3/2}}\Big)\bigg).\notag
 \end{align}

Thus, we can bound all $k=2$ terms in $F_{k\geq 2}^{(t)}$ by $$|\beta^{(t)}|^{-1}\times\left(n^{-3/2}(1+\eta)^{-1}+n^{-2}\eta^{-3/2}\right).$$

In the following paragraph, we argue that the terms corresponding to $k\geq 4$ have insignificant contribution to $F_{k\geq 2}^{(t)},$ and the only significant contribution comes from the terms corresponding to $k=3.$ When $k=4,$ proceeding as \eqref{eq:exp_local_law}, we shall get terms of the form $n^{-7/2}\sum'_{ab}\mathbb{E}[(G^{(t)})_{ab}(G^{(t)})_{aa}(G^{(t)})_{bb}(G^{(t)})_{aa}(G^{(t)}A^{(t)})_{bb}].$ Now, we can apply individual naive bounds on the each $G^{(t)}$ term, which will yield the bound of $\mathcal{O}_{\prec}(|\beta^{(t)}|^{-1}n^{-7/2}\cdot n^{2})=\mathcal{O}_{\prec}(|\beta^{(t)}|^{-1}n^{-3/2}).$ For $k\geq 4,$ the pre-factor will be even smaller than $n^{-7/2}.$ Thus, the terms do not contribute to $F_{k\geq 2}^{(t)}.$

Now, we estimate the contribution of the terms in $F_{k\geq 2}^{(t)}$ when $k=3.$ We begin with the observation from the Condition \ref{Condtion::} that $x_{ab}^{(t)}\stackrel{\text{i.i.d.}}{\sim}n^{-1/2}\chi^{(t)},$ implying $w_{(ab)}^{(t)}\sim n^{-1/2}\chi^{(t)}$ for $a\leq n, b > n$ or $a > n,b\leq n.$ Therefore 
$$
\kappa\left((ba)_t,(ab)_t, (ab)_s, (ba)_s\right)=\frac{1}{(n^{1/2})^{4}}\kappa\left(\chi^{(t)},\bar{\chi}^{(t)}, \chi^{(s)}, \bar{\chi}^{(s)}\right)=\frac{1}{n^2}(\kappa_4)_{ts}.
$$

When $k=3,$ we get a product of three $G^{(t)}$ terms and one $G^{(t)}A^{(t)}$ term in the expansion. In that scenario, all the choices of $\alpha$s except $\alpha = ((ab)_t, (ba)_{t}, (ba)_{t})$ will have a off-diagonal entry of $G^{(t)}$ in the product. When $\alpha = ((ab)_t, (ba)_{t}, (ba)_{t}),$ we get the term $$\mathbb{E}[(G^{(t)})_{aa}(G^{(t)})_{bb}(G^{(t)})_{aa}(G^{(t)}A^{(t)})_{bb}],$$ which results in the following contribution
\begin{align}\label{eq:DeltaG_DeltaG_DeltaG_DeltaGA}
    &-\frac{(\kappa_4)_{tt}}{2n^3}{\sum_{ab}}'(M^{(t)})_{aa}(M^{(t)})_{bb}(M^{(t)})_{aa}(M^{(t)}A^{(t)})_{bb}\\
    &\quad\quad=-\frac{(\kappa_4)_{tt}}{n}\langle M^{(t)}\rangle^3 \langle M^{(t)}A^{(t)}\rangle\notag\\
    &\quad\quad=\frac{i (\kappa_4)_{tt}}{4n} \partial_{\eta}\big((m^{(t)})^4\big)\notag,
\end{align}
where we have used 
$$
\langle M^{(t)}A^{(t)} \rangle = \frac{1-\beta^{(t)}}{\beta^{(t)}}= -i\partial_{\eta}m^{(t)}.
$$
Thus, combining \eqref{eqn: WGA in terms of S_1, S_2} \eqref{eq:F^{(t)}}, and \eqref{eq:DeltaG_DeltaG_DeltaG_DeltaGA}
 we have
\begin{align*}
     &\mathbb{E}\left[\langle \underline{W^{(t)}G^{(t)}}A^{(t)} \rangle\right]\\
     &=-\frac{i}{4n}\partial_{\eta}\log\big(1-(u^{(t)})^2+2(u^{(t)})^3|z|^2-(u^{(t)})^2(z^2+\bar{z}^2)\big)\\
     &\;\;\;\;\;\;\;\;+ \mathcal{O}_\prec\Big(\frac{1}{|z-\bar{z}|^2|\beta^{(t)}|(n\eta)^2}\Big)+\frac{i (\kappa_4)_{tt}}{4n} \partial_{\eta}\big((m^{(t)})^4\big)\\
     &\;\;\;\;\;\;\;\;+\mathcal{O}_\prec\Big(\frac{1}{|\beta^{(t)}|n^{3/2}(1+\eta)}+\frac{1}{|\beta^{(t)}|n^2\eta^{3/2}}\Big).
\end{align*}
Finally,
\begin{align*}
    &\mathbb{E}[\langle G^{(t)}-M^{(t)}\rangle]\\
    &=\mathbb{E}[\langle \underline{-W^{(t)}G^{(t)}}A^{(t)}\rangle]+\mathcal{O}_\prec\Big(\frac{\norm{((\mathcal{B}^{(t)})^*)^{-1}}[\mathbb{I}]}{(n\eta)^2}\Big)\\
    &=-\frac{i (\kappa_4)_{tt}}{4n} \partial_{\eta}\big((m^{(t)})^4\big)+\frac{i}{4n}\partial_{\eta}\log\big(1-(u^{(t)})^2+2(u^{(t)})^3|z|^2-(u^{(t)})^2(z^2+\bar{z}^2)\big)\\
    &\quad+\mathcal{O}_\prec\Big(\frac{1}{|\beta^{(t)}|n^{3/2}(1+\eta)}+\frac{1}{|\beta^{(t)}|n^2\eta^{3/2}}\Big)\\
    &\quad+ \mathcal{O}_\prec \Big(\frac{1}{|z-\bar{z}|^2}\frac{1}{|\beta^{(t)}|(n\eta)^2}\Big)+\mathcal{O}_\prec\Big(\frac{1}{|\beta^{(t)}|(n\eta)^2}\Big)\\
    &=:\xi^{(t)}+\mathcal{O}_\prec\bigg(\Big
   (\frac{1}{|1-|z||}+\frac{1}{|\Im{z}|^2}\Big)\Big(\frac{1}{n^{3/2}(1+\eta)}+\frac{1}{(n\eta)^2}\Big)\bigg)\\
    &=:\xi^{(t)}+ Error_{exp}.
\end{align*}
Therefore,
\begin{align}
    \mathbb{E}\left[\langle cG^{(1)}+dG^{(2)}-cM^{(1)}-dM^{(2)}\rangle\right]
    =c\xi^{(1)}+d\xi^{(2)} + Error_{exp},
\end{align}
which completes the proof of Lemma \ref{Lemma:Exp or resolvent}.
\end{proof}
Now, we are ready to present the proof of the Theorem \ref{Thm:CLT for resolvents}. Using Lemma \ref{Lemma:Exp or resolvent} along with the equation \eqref{eq:G^t-M^t}, $\mathbb{E}\Big[\prod_{i=1}^{p}\big\{c\langle G^{(1)}_i-\mathbb{E}(G^{(1)}_i)\rangle +d\langle G^{(2)}_i-\mathbb{E}(G^{(2)}_i)\rangle\big\}\Big]$ is equivalent to computing
\begin{align*}
    &\mathbb{E}\Big[\prod_{i\in [p]}\langle-c\underline{W^{(1)}G^{(1)}_i}A^{(1)}_i - c\xi_i^{(1)}-d\underline{W^{(2)}G^{(2)}_i}A^{(2)}_i - d\xi_i^{(2)}\rangle\Big]\\
    &\quad+ \mathcal{O}_{\prec}\bigg(\frac{\psi^{(1)}}{n\eta_i}\bigg)+\mathcal{O}_{\prec}\bigg(\frac{\psi^{(2)}} {n\eta_i}\bigg),
\end{align*}
where
\begin{align}\label{definition fo psi_t}
    \psi^{(t)} = \prod_{i\in [p]}\bigg(\frac{1}{|\beta^{(t)}_i|}+\frac{1}{(\Im{z_i})^2}\bigg)\frac{1}{n\eta_i} \leq \prod_{i\in [p]}\bigg(\frac{1}{|1-|z_i||}+\frac{1}{(\Im{z_i})^2}\bigg)\frac{1}{n\eta_i},
\end{align}
and $\psi^{(1)}+\psi^{(2)} \leq \psi$, where $\psi$ is defined in \eqref{eqn: definition of psi error}.

Recall that 
$$\Upsilon_{i}=c\langle -\underline{W^{(1)}G_i^{(1)}}A_i^{(1)}-\xi_i^{(1)}\rangle +d\langle -\underline{W^{(2)}G_i^{(2)}}A_i^{(2)}-\xi_i^{(2)}\rangle.$$
We now begin with the expansion of $\mathbb{E}\big[\prod_{i\in [p]}\Upsilon_{i}\big]$ as follows;

\begin{align}\label{eq:I^{(1)}+I^{(2)}}
        &\mathbb{E}\bigg[\prod_{i\in[p]}\Upsilon_{i}\bigg]\\
        &=\mathbb{E}\bigg[\Upsilon_{1}\prod_{i\neq1}\Upsilon_{i} \bigg]\notag\\
        &=\mathbb{E}\bigg[c\langle-\underline{W^{(1)}G_1^{(1)}}A_1^{(1)}-\xi_1^{(1)}\rangle\prod_{i\neq1}\Upsilon_{i}\bigg]
        +\mathbb{E}\bigg[d\langle-\underline{W^{(2)}G_1^{(2)}}A_1^2-\xi_1^{(2)}\rangle\prod_{i\neq1}\Upsilon_{i}\bigg]\notag\\
        &=: I^{(1)}+I^{(2)}.\notag
\end{align}

Now, we shall evaluate $I^{(1)}.$
\begin{align}\label{eq:I^{(1)}}
     &I^{(1)}\\
        &=\mathbb{E}\Big[c\langle-\underline{W^{(1)}G_1^{(1)}}A_1^{(1)}-\xi_1^{(1)}\rangle\prod_{i\neq1}\Upsilon_{i}\Big]\notag\\
        &=\mathbb{E}\Big[c\langle-W^{(1)}G_1^{(1)}A_1^{(1)}-\mathcal{S}[G_1^{(1)}]G_1^{(1)}A_1^{(1)}-\xi_1^{(1)}\rangle\prod_{i\neq1}\Upsilon_{i}\Big]\notag\\
        &=-c\langle\xi_1^{(1)}\rangle\mathbb{E}\Big[\prod_{i\neq1}\Upsilon_{i}\Big]+c\mathbb{E}\Big[\langle-\mathcal{S}[G_1^{(1)}]G_1^{(1)}A_1^{(1)}\rangle\prod_{i\neq1}\Upsilon_{i}\Big]\notag\\
        &\quad+c\mathbb{E}\Big[\langle-W^{(1)}G_1^{(1)}A_1^{(1)}\rangle\prod_{i\neq1}\Upsilon_{i}\Big]\notag\\
        &=-c\langle\xi_1^{(1)}\rangle\mathbb{E}\Big[\prod_{i\neq1}\Upsilon_{i}\Big]-c\frac{1}{2n^2}\mathbb{E}\Big[\sum_{ab}(G^{(1)}_1)_{aa}(G_1^{(1)}A_1^{(1)})_{bb}\prod_{i\neq1}\Upsilon_{i}\Big]\notag\\
        &\quad+c\mathbb{E}\Big[\langle-W^{(1)}G_1^{(1)}A_1^{(1)} \rangle\prod_{i\neq1}\Upsilon_{i}\Big]\notag\\
        &=: c(I_1^{(1)}+I_2^{(1)}+I_3^{(1)})\notag.
\end{align}
 We shall now estimate $I_{1}^{(1)}, I_{2}^{(1)},$ and $I_{3}^{(1)}$ individually. From a glimpse of the upcoming calculations, we shall see from \eqref{eq:T_1} that $I_{2}^{(1)}$ actually gets canceled by a part of $I_{3}^{(1)}.$ The primary ingredient in these estimations is the cumulant expansion as mentioned in the Result \ref{Result:Cumulant Expansion}.

\subsection{Evaluation of $I_{3}^{(1)}$}\label{subsec: I_3^1}

\begin{align}\label{eq:cummulant_exp_I_1^{(3)}}
     &I_3^{(1)}\\
         &=\mathbb{E}\Big[\langle-W^{(1)}G_1^{(1)}A_1^{(1)}\rangle\prod_{i\neq1}\Upsilon_{i}\Big]\notag\\
         &=\mathbb{E}\Big[\sum_{ab}w^{(1)}_{ba}\langle-\Delta^{ba}G_1^{(1)}A_1^{(1)}\rangle\prod_{i\neq1}\Upsilon_{i}\Big]
         \notag\\
         &=\sum_{ab}\sum_{k\geq 0}\sum_{\alpha\in\{(ab)_1,(ba)_1,(ab)_2,(ba)_2\}^k}\frac{\kappa((ba)_1,\alpha)}{k!}\mathbb{E}\Big[\partial_\alpha\big(\langle-\Delta^{ba}G_1^{(1)}A_1^{(1)}\rangle\prod_{i\neq1}\Upsilon_{i}\big)\Big].
         \notag
 \end{align}
Notice that in \eqref{eq:cummulant_exp_I_1^{(3)}}, the term with $k = 0$ shall not contribute since according to the Condition \ref{Condtion::}, $\mathbb{E}[w_{ba}^{(1)}] = 0$. Thus, from \eqref{eq:cummulant_exp_I_1^{(3)}} we have

\begin{align}\label{eq:I_1^{(3)}_final}
    &I_3^{(1)}\\
        &=\sum_{ab}\kappa((ba)_1,(ab)_1)\mathbb{E}\Big[\partial_{(ab)_1}[\langle-\Delta^{ba}G_1^{(1)}A_1^{(1)}\rangle\prod_{i\neq1}\Upsilon_{i}]\Big]
        +\sum_{ab}\kappa((ba)_1,(ba)_1)\notag\\
        &\;\;\;\;\;\mathbb{E}\Big[\partial_{(ba)_1}[\langle-\Delta^{ba}G_1^{(1)}A_1^{(1)}\rangle
        \prod_{i\neq1}\Upsilon_{i}]\Big]+\sum_{ab}\kappa((ba)_1,(ab)_2)\mathbb{E}\Big[\partial_{(ab)_2}[\langle-\Delta^{ba}\notag\\
        &\;\;\;\;\;G_1^{(1)}A_1^{(1)}\rangle\prod_{i\neq1}\Upsilon_{i}]\Big]+\sum_{ab}\kappa((ba)_1,(ba)_2)\mathbb{E}\Big[\partial_{(ba)_2}[\langle-\Delta^{ba}G_1^{(1)}A_1^{(1)}\rangle\prod_{i\neq1}\Upsilon_{i}]\Big]\notag\\
        &\;\;\;\;
        +\sum_{ab}\sum_{k\geq 2}\sum_{\alpha\in\{(ab)_1,(ba)_1,(ab)_2,(ba)_2\}^k}\frac{\kappa((ba)_1,\alpha)}{k!}\mathbb{E}\Big[\partial_\alpha\big(\langle-\Delta^{ba}G_1^{(1)}A_1^{(1)}\rangle\prod_{i\neq1}\Upsilon_{i}\big)\Big]\notag\\
        &=\frac{1}{n}\sum_{ab}\mathbb{E}\Big[\partial_{(ab)_1}[\langle-\Delta^{ba}G_1^{(1)}A_1^{(1)}\rangle\prod_{i\neq1}\Upsilon_{i}]\Big]+\frac{\gamma_1}{n}\sum_{ab}\mathbb{E}\Big[\partial_{(ba)_1}[\langle-\Delta^{ba}G_1^{(1)}A_1^{(1)}\rangle
        \notag\\
        &\;\;\;\;\prod_{i\neq1}\Upsilon_{i}]\Big]+\frac{\gamma}{n} \sum_{ab}\mathbb{E}\Big[\partial_{(ab)_2}[\langle-\Delta^{ba}G_1^{(1)}A_1^{(1)}\rangle\prod_{i\neq1}\Upsilon_{i}]\Big]\notag\\
        &\;\;\;\;+\frac{\rho}{n} \sum_{ab}\mathbb{E}\Big[\partial_{(ba)_2}[\langle-\Delta^{ba}G_1^{(1)}A_1^{(1)}\rangle\prod_{i\neq1}\Upsilon_{i}]\Big]\notag\\
        &\;\;\;\;+\sum_{ab}\sum_{k\geq 2}\sum_{\alpha\in\{(ab)_1,(ba)_1,(ab)_2,(ba)_2\}^k}\frac{\kappa((ba)_1,\alpha)}{k!}\mathbb{E}\Big[\partial_\alpha\big(\langle-\Delta^{ba}G_1^{(1)}A_1^{(1)}\rangle\prod_{i\neq1}\Upsilon_{i}\big)\Big]\notag\\
        &=:\frac{1}{n}T_1^{(1)}+\frac{1}{n}\gamma_1 T_2^{(1)} +\frac{1}{n}\gamma T_3^{(1)} +\frac{1}{n}\rho T_4^{(1)}+T_5^{(1)}\notag.
\end{align}

The symbols $\gamma_{1}, \gamma, \rho$ are same as defined in the Condition \ref{Condtion::}. For convenience, we list those down below,
$$\kappa((ba)_1, (ab)_1)=\mathbb{E}[(ba)_1,\bar{(ba)_1}]=\mathbb{E}[|(ba)_1|^2]=\frac{1}{n},$$
    $$\kappa((ba)_1, (ba)_2)=\mathbb{E}[(ba)_1,(ba)_2]=:\frac{1}{n}\rho,$$
    $$\kappa((ba)_1, (ab)_2)=\mathbb{E}[(ba)_1,(ab)_2]=: \frac{1}{n}\gamma,$$
and 

\begin{equation}\notag
    \kappa((ba)_1, (ba)_1) = \frac{1}{n} \gamma_1 =  \left\{\begin{array}{ll} 0 & \text {  when $X^{(t)}$ in condition \ref{Condtion::} is $\mathbb{C}$-valued,}  \\ 1/n & \text {  when $X^{(t)}$ in condition \ref{Condtion::} is $\mathbb{R}$-valued.}\end{array} \right.
\end{equation}

We delegate the calculations of the factors $T_{1}^{(1)}, T_{2}^{(1)}, T_{3}^{(1)}, T_{4}^{(1)}, T_{5}^{(1)}$ in the following subsections.
\subsubsection{Evaluation of $T_{1}^{(1)}, T_{2}^{(1)}, T_{3}^{(1)}, T_{4}^{(1)}$}
Let us first evaluate $T_1^{(1)}$ as follows;
\begin{align}\label{eq:T_1}
     &T_1^{(1)}\\
        &=\sum_{ab}\mathbb{E}\Big[\partial_{(ab)_1}\big(\langle-\Delta^{ba}G_1^{(1)}A_1^{(1)} \rangle \prod_{i\neq1}\Upsilon_{i}\big)\Big]\notag\\
         &=\frac{1}{2n}{\sum_{ab}}'\mathbb{E}\Big[(G_1^{(1)})_{aa}(G_1^{(1)}A_1^{(1)})_{bb}\prod_{i\neq1}\Upsilon_{i}\Big]\notag\\
        &\;\;\;\;+\sum_{ab}\mathbb{E}\Big[\langle-\Delta^{ba}G_1^{(1)}A_1^{(1)} \rangle \sum_{i\neq1}\partial_{(ab)_1} \big(c\langle -\underline{W^{(1)}G_i^{(1)}}A_i^{(1)}\rangle\big) \prod_{j\neq1,i}\Upsilon_{j}\Big]\notag\\
        &=-nI_2^{(1)}+{\sum_{ab}}'\mathbb{E}\Big[\langle-\Delta^{ba}G_1^{(1)}A_1^{(1)} \rangle \sum_{i\neq1}c\langle- \Delta^{ab}G_i^{(1)}A_i^{(1)}\notag\\
        &\;\;\;\;+\underline{W^{(1)}G_i^{(1)}\Delta^{ab}G_i^{(1)}}A_i^{(1)}\rangle \prod_{j\neq1,i}\Upsilon_{j}\Big]\notag\\
        &= -nI_2^{(1)}+c\sum_{i\neq1}\mathbb{E} \Big[{\sum_{ab}}'\big(\langle-\Delta^{ba}G_1^{(1)}A_1^{(1)} \rangle \langle- \Delta^{ab}G_i^{(1)}A_i^{(1)}\rangle\notag\\
        &\;\;\;\;+\langle-\Delta^{ba}G_1^{(1)}A_1^{(1)} \rangle\langle\underline{W^{(1)}G_i^{(1)}\Delta^{ab}G_i^{(1)}}A_i^{(1)}\rangle\big) \prod_{j\neq1,i}\Upsilon_{j}\Big]\notag\\
         &=-nI_2^{(1)}+\frac{c}{2n}\sum_{i\neq1}\mathbb{E} \Big[\langle G_1^{(1)}A_1^{(1)}E_1G_i^{(1)}A_i^{(1)}E_2+G_1^{(1)}A_1^{(1)}E_2G_i^{(1)}A_i^{(1)}E_1\rangle \notag\\
         &\;\;\;\;\;\prod_{j\neq1,i}\Upsilon_{j}\Big]+\frac{c}{2n}\sum_{i\neq1}\mathbb{E} \Big[\langle-G_1^{(1)}A_1^{(1)}E_1 \underline{G_i^{(1)}A_i^{(1)}W^{(1)}G_i^{(1)}}E_2\notag\\
         &\;\;\;\;\; -G_1^{(1)}A_1^{(1)}E_2 \underline{G_i^{(1)} A_i^{(1)}W^{(1)}G_i^{(1)}}E_1\rangle \prod_{j\neq1,i}\Upsilon_{j}\Big]\notag\\
         &=-nI_2^{(1)}+\frac{c}{2n}\sum_{i\neq1}\mathbb{E}\Big[ \langle G_1^{(1)}A_1^{(1)}EG_i^{(1)}A_i^{(1)}E^{\dag}\notag\\
         &\;\;\;\;\;-G_1^{(1)}A_1^{(1)}E\underline{G_i^{(1)}A_i^{(1)}W^{(1)}G_i^{(1)}}E^{\dag}\rangle\prod_{j\neq1,i}\Upsilon_{j}\Big].\notag
\end{align}

Here, $I_{2}^{(1)}$ is same as defined in \eqref{eq:I^{(1)}}, and $E_{1}, E_{2}, E, E^{\dag}$ are same as defined in \eqref{eqn: dfn of E_1 and E_2}. Similarly, we can calculate
\begin{align}
    &T_2^{(1)}\\
    =&\frac{1}{2n}{\sum_{ab}}'\mathbb{E}\Big[(G_1^{(1)})_{ab}(G_1^{(1)}A_1^{(1)})_{ab}\prod_{i\neq1} \Upsilon_{i} \Big]+\frac{c}{2n}\sum_{i\neq1}\mathbb{E}\Big[ \langle G_1^{(1)}A_1^{(1)}E(A_i^{(1)})^t(G_i^{(1)})^tE^{\dag}\notag\\
         &\;\;\;\;\;-G_1^{(1)}A_1^{(1)}E\underline{(G_i^{(1)})^tW^{(1)}(A_i^{(1)})^t(G_i^{(1)})^t}E^{\dag}\rangle \prod_{j\neq1,i}\Upsilon_{j}\Big],\notag\\
    &T_3^{(1)}\label{eq:T_3^{(1)}_final}\\
    =&\frac{d}{2n}\sum_{i\neq1}\mathbb{E}\Big[\langle G_1^{(1)}A_1^{(1)}EG_i^{(2)}A_i^{(2)}E^{\dag}+ \underline{G_1^{(1)}A_1^{(1)}EG_i^{(2)}A_i^{(2)}W^{(2)}G_i^{(2)}}E^{\dag}\rangle \prod_{j\neq1,i}\Upsilon_{j}\Big],\notag\\
    &T_4^{(1)}\label{eq:T_4^{(1)}_final}\\
    =&\frac{d}{2n}\sum_{i\neq1}\mathbb{E}\bigg[\Big(\langle G_1^{(1)}A_1^{(1)}E_1G_i^{(2)}A_i^{(2)}E_1\rangle+\langle G_1^{(1)}A_1^{(1)}E_2G_i^{(2)}A_i^{(2)}E_2\rangle\notag\\
    &+\langle \underline{G_1^{(1)}A_1^{(1)}E_1G_i^{(2)}A_i^{(2)}W^{(2)}G_i^{(2)}}E_1\rangle+\langle \underline{G_1^{(1)}A_1^{(1)}E_2G_i^{(2)}A_i^{(2)}W^{(2)}G_i^{(2)}}E_2 \rangle \Big)\notag\\
    &\prod_{j\neq1,i}\Upsilon_{j}\bigg],\notag
\end{align}
where $A^t$ denotes the transpose of matrix $A.$

\subsection{Reduced form of $I^{(1)}+I^{(2)}$} Now, we use the above expressions of $T_{1}^{(1)}, T_{2}^{(1)}, T_{3}^{(1)}, T_{4}^{(1)}$ and $T_{5}^{(1)}$ in conjunction with \eqref{eq:I_1^{(3)}_final} to evaluate $I^{(1)}+I^{(2)}.$ First, we notice that the expression of $I_{3}^{(1)}$ in equation \eqref{eq:I_1^{(3)}_final} contains the factor $T_{1}^{(1)}+\gamma_{1}T_{2}^{(1)}.$ But, as per the definition of $\gamma_{1},$ it can take two possible values; $\gamma_{1}=0$ (complex value case), and $\gamma_{1}=1$ (real value case). In the first case, as per the equation \eqref{eq:I_1^{(3)}_final}, we observe that $T_{2}^{(1)}$ will not have any contribution to the $I_{3}^{(1)}.$ In the rest of the section, we present the calculation for the second case i.e., $\gamma_{1}=1.$ The other case when $\gamma_{1}=0$ is discussed in Remark \ref{remark: real vs complex case}.

\begin{align}\label{eq:T_1^{(1)}+T_2^{(1)}_final}
     &T_1^{(1)}+\gamma_1 T_2^{(1)}\\
    &= T_1^{(1)}+T_2^{(1)}\notag\\
    &=-nI_2^{(1)}+\frac{1}{2n}{\sum_{ab}}'\mathbb{E}\Big[(G_1^{(1)})_{ab}(G_1^{(1)}A_1^{(1)})_{ab}\prod_{i\neq1} \Upsilon_{i} \Big]\notag\\
    &\;\;\;\;+\frac{c}{2n}\sum_{i\neq1}\mathbb{E}\Big[ \langle G_1^{(1)}A_1^{(1)}E\big(G_i^{(1)}A_i^{(1)}+ (A_i^{(1)})^t(G_i^{(1)})^t \big)E^{\dag}\notag\\
    &\;\;\;\;-G_1^{(1)}A_1^{(1)}E\big(\underline{G_i^{(1)}A_i^{(1)}W^{(1)}G_i^{(1)}}+\underline{(G_i^{(1)})^tW^{(1)}(A_i^{(1)})^t(G_i^{(1)})^t}\big)E^{\dag}\rangle\prod_{j\neq1,i}\Upsilon_{j}\Big].\notag
\end{align}

Thus, from \eqref{eq:I^{(1)}}, \eqref{eq:I_1^{(3)}_final}, \eqref{eq:T_3^{(1)}_final}, \eqref{eq:T_4^{(1)}_final}, and \eqref{eq:T_1^{(1)}+T_2^{(1)}_final} we have 
\begin{align}\label{eq:I^{(1)}_final}
    &I^{(1)}\\
    =&c(I_1^{(1)}+I_2^{(1)}+I_3^{(1)})\notag\\
    =&c\Big(I_1^{(1)}+I_2^{(1)}+\frac{1}{n}T_1^{(1)}+\frac{1}{n}T_2^{(1)}+\frac{\gamma}{n} T_3^{(1)}+\frac{\rho}{n} T_4^{(1)}+T_5^{(1)}\Big)\notag\\
        =&cT_5^{(1)}-c\langle\xi_1^{(1)}\rangle\mathbb{E}\Big[\prod_{i\neq1}\Upsilon_{i}\Big]+\frac{c}{2n^2}{\sum_{ab}}'\mathbb{E}\Big[(G_1^{(1)})_{ab}(G_1^{(1)}A_1^{(1)})_{ab}\prod_{i\neq1} \Upsilon_{i} \Big]\notag\\
        &+\frac{c^2}{2n^2}\sum_{i\neq1}\mathbb{E}\Big[\langle G_1^{(1)}A_1^{(1)}E\big(G_i^{(1)}A_i^{(1)}+ (A_i^{(1)})^t(G_i^{(1)})^t \big)E^{\dag}\notag\\
        &-G_1^{(1)}A_1^{(1)}E\big(\underline{G_i^{(1)}A_i^{(1)}W^{(1)}G_i^{(1)}}+\underline{(G_i^{(1)})^tW^{(1)}(A_i^{(1)})^t(G_i^{(1)})^t}\big)E^{\dag}\rangle \prod_{j\neq1,i}\Upsilon_{j}\Big]\notag\\
        &+ \frac{cd\gamma}{2n^2}\sum_{i\neq1}\mathbb{E}\Big[\big(\langle G_1^{(1)}A_1^{(1)}EG_i^{(2)}A_i^{(2)}E^{\dag}\rangle+\langle \underline{G_1^{(1)}A_1^{(1)}EG_i^{(2)}A_i^{(2)}W^{(2)}G_i^{(2)}}E^{\dag}\rangle \big)\notag\\
        &\quad\prod_{j\neq1,i}\Upsilon_{j}\Big]+\frac{cd\rho}{2n^2}\sum_{i\neq1}\mathbb{E}\Big[\big(\langle G_1^{(1)}A_1^{(1)}E_1G_i^{(2)}A_i^{(2)}E_1\rangle+\langle G_1^{(1)}A_1^{(1)}E_2G_i^{(2)}A_i^{(2)}E_2\rangle\big)\notag\\
        &\quad\prod_{j\neq1,i}\Upsilon_{j}\Big]+\frac{cd\rho}{2n^2}\sum_{i\neq1}\mathbb{E}\Big[\big(\langle \underline{G_1^{(1)}A_1^{(1)}E_1G_i^{(2)}A_i^{(2)}W^{(2)}G_i^{(2)}}E_1\rangle\notag\\
        &+\langle \underline{G_1^{(1)}A_1^{(1)}E_2G_i^{(2)}A_i^{(2)}W^{(2)}G_i^{(2)}}E_2 \rangle \big)\prod_{j\neq1,i}\Upsilon_{j}\Big].\notag
\end{align}

Similarly, an equivalent expression of $I^{(2)}$ can be obtained as well. Now, by using \eqref{eq:I^{(1)}+I^{(2)}}, the expressions of $I^{(1)}$ from \eqref{eq:I^{(1)}_final}, and an equivalent expression for $I^{(2)},$ we have

\begin{align}\label{eq:I^{(1)}+I^{(2)}Noataions}
    &\mathbb{E}\Big[\prod_{i\in[p]}\Upsilon_{i}\Big]\notag\\
        =&cT_5^{(1)}+dT_5^{(2)}-c\langle\xi_1^{(1)}\rangle\mathbb{E}\Big[\prod_{i\neq1}\Upsilon_{i}\Big]-d\langle\xi_1^{(2)}\rangle\mathbb{E}\Big[ \prod_{i\neq1} \Upsilon_{i}\Big]\notag\\
        &+\frac{c}{2n^2}{\sum_{ab}}'\mathbb{E}\Big[(G_1^{(1)})_{ab}(G_1^{(1)}A_1^{(1)})_{ab}\prod_{i\neq1} \Upsilon_{i}\Big] +\frac{d}{2n^2}{\sum_{ab}}'\mathbb{E}\Big[(G_1^{(2)})_{ab}(G_1^{(2)}A_1^2)_{ab}\prod_{i\neq1} \Upsilon_{i} \Big]\notag\\
        &+\frac{c^2}{2n^2}\sum_{i\neq1}\mathbb{E} \Big[\langle G_1^{(1)}A_1^{(1)}E\big(G_i^{(1)}A_i^{(1)}+ (A_i^{(1)})^t(G_i^{(1)})^t \big)E^{\dag}-G_1^{(1)}A_1^{(1)}E\big(\underline{G_i^{(1)}A_i^{(1)}W^{(1)}G_i^{(1)}}\notag\\
        &+\underline{(G_i^{(1)})^tW^{(1)}(A_i^{(1)})^t(G_i^{(1)})^t}\big)E^{\dag}\rangle \prod_{j\neq1,i}\Upsilon_{j}\Big]+\frac{d^2}{2n^2}\sum_{i\neq1}\mathbb{E} \Big[\langle G_1^{(2)}A_1^{(2)}E\big(G_i^{(2)}A_i^{(2)}\notag\\
        &+ (A_i^{(2)})^t(G_i^{(2)})^t\big)E^{\dag}-G_1^{(2)}A_1^{(2)}E\big(\underline{G_i^{(2)}A_i^{(2)}W^{(2)}G_i^{(2)}}+\underline{(G_i^{(2)})^tW^{(2)}(A_i^{(2)})^t(G_i^{(2)})^t}\big)E^{\dag}\rangle \notag\\
        &\;\;\prod_{j\neq1,i}\Upsilon_{j}\Big]+ \frac{cd\gamma}{2n^2}\sum_{i\neq1}\mathbb{E}\Big[\langle G_1^{(1)}A_1^{(1)}EG_i^{(2)}A_i^{(2)}E^{\dag}\rangle \prod_{j\neq1,i}\Upsilon_{j}\Big]\notag\\
        &+\frac{cd\gamma}{2n^2}\sum_{i\neq1}\mathbb{E}\Big[\langle G_1^{(2)}A_1^{(2)}EG_i^{(1)}A_i^{(1)}E^{\dag}\rangle\prod_{j\neq1,i}\Upsilon_{j}\Big]+\frac{cd\rho}{2n^2}\sum_{i\neq1}\mathbb{E}\Big[\big(\langle G_1^{(1)}A_1^{(1)}E_1G_i^{(2)}A_i^{(2)}E_1\rangle\notag\\
        &+\langle G_1^{(1)}A_1^{(1)}E_2G_i^{(2)}A_i^{(2)}E_2\rangle\big)\prod_{j\neq1,i}\Upsilon_{j}\Big]+\frac{cd\rho}{2n^2}\sum_{i\neq1}\mathbb{E}\Big[\big(\langle G_1^{(2)}A_1^{(2)}E_1G_i^{(1)}A_i^{(1)}E_1\rangle\notag\\
        &+\langle G_1^{(2)}A_1^{(2)}E_2G_i^{(1)}A_i^{(1)}E_2\rangle \big)\prod_{j\neq1,i}\Upsilon_{j}\Big]+\frac{cd\gamma}{2n^2}\bigg(\sum_{i\neq1}\mathbb{E}\Big[\langle\underline{G_1^{(1)}A_1^{(1)}EG_i^{(2)}A_i^{(2)}W^{(2)}G_i^{(2)}}E^{\dag}\rangle \notag\\
         &\;\;\prod_{j\neq1,i}\Upsilon_{j}\Big]
        +\sum_{i\neq1}\mathbb{E}\Big[\langle \underline{G_1^{(2)}A_1^{(2)}EG_i^{(1)}A_i^{(1)}W^{(1)}G_i^{(1)}}E^{\dag}\rangle \prod_{j\neq1,i}\Upsilon_{j}\Big]\bigg)\notag\\
        &+\frac{cd\rho}{2n^2}\sum_{i\neq1}\mathbb{E}\Big[\big(\langle \underline{G_1^{(1)}A_1^{(1)}E_1G_i^{(2)}A_i^{(2)}W^{(2)}G_i^{(2)}}E_1\rangle+\langle \underline{G_1^{(1)}A_1^{(1)}E_2G_i^{(2)}A_i^{(2)}W^{(2)}G_i^{(2)}}E_2 \rangle \big)\notag\\
        &\;\;\prod_{j\neq1,i}\Upsilon_{j}\Big]+\frac{cd\rho}{2n^2}\sum_{i\neq1}\mathbb{E}\Big[\Big(\langle \underline{G_1^{(2)}A_1^{(2)}E_1G_i^{(1)}A_i^{(1)}W^{(1)}G_i^{(1)}}E_1\rangle\notag\\
        &+\langle \underline{G_1^{(2)}A_1^{(2)}E_2G_i^{(1)}A_i^{(1)}W^{(1)}G_i^{(1)}}E_2 \rangle \Big)\prod_{j\neq1,i}\Upsilon_{j}\Big]\notag\\
         =&:cT_5^{(1)}+dT_5^{(2)}-c\langle\xi_1^{(1)}\rangle\mathbb{E}\Big[\prod_{i\neq1}\Upsilon_{i}\Big]-d\langle\xi_1^{(2)}\rangle\mathbb{E}\Big[\prod_{i\neq1} \Upsilon_{i}\Big]
        +\frac{c}{2n^2}S^{(1)}_1 \\
        &+\frac{d}{2n^2}S^{(2)}_1+\frac{c^2}{2n^2}S^{(1)}_2
        +\frac{d^2}{2n^2}S^{(2)}_2+ \frac{cd\gamma}{2n^2}S^{(1)}_3+\frac{cd\gamma}{2n^2}S^{(2)}_3
        +\frac{cd\rho}{2n^2}S^{(1)}_4
        \notag\\
        &+\frac{cd\rho}{2n^2}S^{(2)}_4 +\frac{cd\gamma}{2n^2}\big(S^{(1)}_5
        +S^{(2)}_5\big)
        +\frac{cd\rho}{2n^2}S^{(1)}_6
        +\frac{cd\rho}{2n^2}S^{(2)}_6.\notag
\end{align}

We note that the $S_{i}^{(t)}$ terms actually originated from the cumulant expansion \eqref{eq:I_1^{(3)}_final}, when $k=1,$ and the $T_{i}^{(t)}$ terms originated from the terms  corresponding to $k\geq 2.$

\subsection{Evaluation of $S_{i}^{(t)}$ terms in \eqref{eq:I^{(1)}+I^{(2)}Noataions}}

Now, we proceed to the evaluations of $S_{i}^{(t)}.$ The evaluation of $S_{2}^{(t)}$s are explained in the Subsection \ref{subsec: S_2 evaluation}, and rest are calculated in the Subsection \ref{subsec: Evaluation of other S_i}.

\subsubsection{Evaluation of $S_{2}^{(t)}$s}\label{subsec: S_2 evaluation} We shall use the notation $G^{(t)}_{\bar{i}} = (G^{(t)})^{\bar{z_i}}.$ We first identify the main contributing terms in $S_{2}^{(1)}$ as follows;
\begin{align}\label{eq:S^{(1)}_2_ver1}
    &S^{(1)}_2\\
    =&\sum_{i\neq1}\mathbb{E} \bigg[\langle G_1^{(1)}A_1^{(1)}E\big(G_i^{(1)}A_i^{(1)}+ (A_i^{(1)})^t(G_i^{(1)})^t\big)E^{\dag}\notag\\
        &-G_1^{(1)}A_1^{(1)}E\big(\underline{G_i^{(1)}A_i^{(1)}W^{(1)}G_i^{(1)}}+\underline{(G_i^{(1)})^tW^{(1)}(A_i^{(1)})^t(G_i^{(1)})^t}\big)E^{\dag}\rangle \prod_{j\neq1,i}\Upsilon_{j}\bigg]\notag\\
        =&\sum_{i\neq1}\mathbb{E} \bigg[\Big\{\langle G_1^{(1)}A_1^{(1)}E\big(G_i^{(1)}A_i^{(1)}+ (A_i^{(1)})^t(G_i^{(1)})^t \big)E^{\dag}\rangle E\notag\\
        &+\langle G_1^{(1)}\mathcal{S}[G_1^{(1)}A_1^{(1)}G_i^{(1)}A_i^{(1)}]G_i^{(1)}E^\dag \rangle+G_1^{(1)}\mathcal{S}[G_1^{(1)}A_1^{(1)}EG_{\bar{i}}^{(1)}](A_i^{(1)})^tG_{\bar{i}}^{(1)}E^\dag \notag\\
        &-\langle \underline{G_1^{(1)}A_1^{(1)}EG_i^{(1)}A_i^{(1)}W^{(1)}G_i^{(1)}E^\dag}- \underline{G_1^{(1)}A_1^{(1)}EG_{\bar{i}}^{(1)}W^{(1)}(A_i^{(1)})^tG_{\bar{i}}^{(1)}E^\dag}\rangle\Big\} \notag\\
        &\;\;\prod_{j\neq1,i}\Upsilon_{j}\bigg]\notag\\
         =&\sum_{i\neq1}\mathbb{E} \bigg[\Big\{\langle G_1^{(1)}A_1^{(1)}E\big(G_i^{(1)}A_i^{(1)}+ (A_i^{(1)})^t(G_i^{(1)})^t \big)E^{\dag}\rangle+\langle G_1^{(1)}\mathcal{S}[G_1^{(1)}A_1^{(1)}E\notag\\
        &\;\;G_i^{(1)}A_i^{(1)}]G_i^{(1)}E^\dag \rangle+G_1^{(1)}\mathcal{S}[G_1^{(1)}A_1^{(1)}EG_{\bar{i}}^{(1)}](A_i^{(1)})^tG_{\bar{i}}^{(1)}E^\dag\rangle \Big\}\prod_{j\neq1,i}\Upsilon_{j}\bigg]\notag\\
        &-\sum_{i\neq1}\mathbb{E}\bigg[\Big\{\langle \underline{G_1^{(1)}A_1^{(1)}EG_i^{(1)}A_i^{(1)}W^{(1)}G_i^{(1)}E^\dag}\notag\\
        &+\underline{G_1^{(1)}A_1^{(1)}EG_{\bar{i}}^{(1)}W^{(1)}(A_i^{(1)})^tG_{\bar{i}}^{(1)}E^\dag}\rangle \Big\}\prod_{j\neq1,i}\Upsilon_{j}\bigg].\notag
\end{align}
In the following discussion, we show that the main contribution comes only from the terms of the form $G_1^{(1)}A_1^{(1)}EG_i^{(1)}A_i^{(1)}E^{\dag}$ and $G_1^{(1)}\mathcal{S}[G_1^{(1)}A_1^{(1)}EG_{\bar{i}}^{(1)}](A_i^{(1)})^tG_{\bar{i}}^{(1)}E^\dag.$ The main ingredient here is the local law as stated in the Theorem \ref{Thm: Local law combined}, which gives us the following estimates; 
\begin{align}\label{eq:S_2^1_Local_law_cal_1}
     &\big|\langle G_1^{(1)}A_1^{(1)}EG_i^{(1)}A_i^{(1)}E^{\dag}- (M^{(1)})_{A_1^{(1)}E}^{z_1,z_i}A_i^{(1)}E^{\dag}\rangle\big|\\
     &\prec\frac{C_{\epsilon} \|A_i^{(1)}E^{\dag}\|\|A_1^{(1)}E\|}{n \eta_*^{1i}(\eta_1 \eta_i)^{1/2} |(\hat{\beta}^*_{1i})^{(1)}|} \bigg((\eta_*^{1i})^{1/12} +\frac{(\eta_*^{1i})^{1 / 4}}{|(\hat{\beta}^*_{1i})^{(1)}|}+\frac{1}{\sqrt{n \eta_*^{1i}}}+\frac{1}{\big(|\hat{\beta}^{(1)}_*| n \eta_*^{1i}\big)^{1 / 4}}\bigg)\notag\\
     &=\mathcal{O}_\prec\bigg(\frac{1}{n |z_1-z_i|^4\eta_*^{1i}(\eta_1\eta_i)^{1/2}|\beta_i^{(1)}||\beta_1^{(1)}|}\bigg),\notag
\end{align}
where $|(\hat{\beta}^*_{1i})^{(1)}| \gtrsim |z_1-z_i|^2$  is from Lemma \ref{Lemma:bound on beta hat} and $\eta_*^{1i} = \min\{\eta_1, \eta_i\}.$
\begin{align}\label{eq:S_2^1_Local_law_cal_2}
     &\big|\langle G_1^{(1)}\mathcal{S}[G_1^{(1)}A_1^{(1)}EG_{\bar{i}}^{(1)}](A_i^{(1)})^tG_{\bar{i}}^{(1)}E^\dag- \mathcal{S}[ (M^{(1)})_{A_1^{(1)}E}^{z_1,\bar{z_i}}](A_i^{(1)})^t(M^{(1)})^{\bar{z_i},z_1}_{E^{\dag}} \rangle\big|\notag\\
     =&\big|\langle\mathcal{S}[G_1^{(1)}A_1^{(1)}EG_{\bar{i}}^{(1)}](A_i^{(1)})^tG_{\bar{i}}^{(1)}E^\dag G_1^{(1)}- \mathcal{S}[ (M^{(1)})_{A_1^{(1)}E}^{z_1,\bar{z_i}}](A_i^{(1)})^t(M^{(1)})^{\bar{z_i},z_1}_{E^{\dag}} \rangle\big|\notag\\
     \prec&\bigg(\frac{C_{\epsilon} \|A_1^{(1)}E\| \|A_i^{(1)}E^{\dag}\|}{n \eta_*^{1i}(\eta_1 \eta_i)^{1/2} |(\hat{\beta}^*_{1i})^{(1)}|} \Big((\eta_*^{1i})^{1/2} +\frac{(\eta_*^{1i})^{1 / 4}}{|(\hat{\beta}^*_{1i})^{(1)}|}+\frac{1}{\sqrt{n \eta_*^{1i}}}+\frac{1}{\big(|(\hat{\beta}^*_{1i})^{(1)}| n \eta_*^{1i}\big)^{1 / 4}}\Big)\bigg)^2\notag\\
     &\;\;=\mathcal{O}_\prec\bigg(\frac{1}{n^2 |z_{1}-z_{i}|^{4}(\eta_*^{1i})^2\eta_1\eta_i|\beta_i^{(1)}||\beta_1^{(1)}|}\bigg).
\end{align}
Now, for the error terms in \eqref{eq:S^{(1)}_2}, we shall use the bound 
\begin{align}\label{eq:error_bound}
    & \mathbb{E}|\langle\underline{ G_1^{(1)}A_1^{(1)}EG_i^{(1)} A_i^{(1)}W^{(1)}G_i^{(1)}E^\dag}\rangle|^2+\mathbb{E}|\langle\underline{G_1^{(1)}A_1^{(1)}EG_{\bar{i}}^{(1)}W^{(1)}(A_i^{(1)})^tG_{\bar{i}}^{(1)}E^\dag}\rangle|^2\\
        &\;\;\;\; \lesssim \Big(\frac{1}{n \eta_1\eta_i\eta_{*}^{1i}|\beta_1^{(1)}||\beta_i^{(1)}|}\Big)^2,\notag
\end{align}
which we shall prove at the end of this section in Corollary  \ref{cor:overline_cor}. In addition, using the \eqref{eq:G^t-M^t}, $|\xi_i^{(t)}| \lesssim \frac{1}{n},$ and the local law Theorem \ref{Thm: Local_Law}, we obtain that
\begin{align}\label{eqn: bound on WGA underline}
    &|\Upsilon_{i}|\notag\\
    &=|c\langle -\underline{W^{(1)}G_i^{(1)}}A_i^{(1)}-\xi_i^{(1)}\rangle +d\langle -\underline{W^{(2)}G_i^{(2)}}A_i^{(2)}-\xi_i^{(2)}\rangle|\notag\\
    &=|c\langle G_{i}^{(1)} - M_{i}^{(1)}-\xi_i^{(1)}\rangle +d\langle G_{i}^{(2)} - M_{i}^{(2)}-\xi_i^{(2)}\rangle| + \mathcal{O}_{\prec}\Big(\frac{1}{|\beta_{i}^{*}|(n\eta_{i})^{2}}\Big)\notag\\
    &=\mathcal{O}_{\prec}\Big(\frac{1}{\sqrt{n\eta_{i}}}\Big)+\mathcal{O}_{\prec}\Big(\frac{1}{|\beta_{i}^{*}|(n\eta_{i})^{2}}\Big)=\mathcal{O}_{\prec}\Big(\frac{1}{|\beta_{i}^{*}|\sqrt{n\eta_{i}}}\Big),
\end{align}
where $\beta_{i}^{*}:=\min\{\beta_{i}^{(1)}, \beta_{i}^{(2)}\}.$

Thus, by using \eqref{eq:error_bound} and\eqref{eqn: bound on WGA underline}, we have 
\begin{align}\label{eq:error_bound_final}
    &\Big|n^{-2}\mathbb{E}\big[\langle \underline{ G_1^{(1)}A_1^{(1)}EG_i^{(1)} A_i^{(1)}W^{(1)}G_i^{(1)}E^\dag} \rangle+\langle\underline{G_1^{(1)}A_1^{(1)}EG_{\bar{i}}^{(1)}W^{(1)}(A_i^{(1)})^tG_{\bar{i}}^{(1)}E^\dag}\rangle\big]
    \prod_{j\neq1,i}\Upsilon_{j}\Big|\\
        & \prec \frac{1}{n^2}\bigg[\prod_{j\neq1,i}\frac{1}{|\beta_{j}^{*}|\sqrt{n\eta_{j}}}\bigg]\Big(\big( \mathbb{E}|\langle\underline{ G_1^{(1)}A_1^{(1)}EG_i^{(1)} A_i^{(1)}W^{(1)}G_i^{(1)}E^\dag}\rangle|^2\big)^{1/2}\notag\\
        &\quad\quad+\big(\mathbb{E}| \langle\underline{G_1^{(1)}A_1^{(1)}EG_{\bar{i}}^{(1)}W^{(1)}(A_i^{(1)})^tG_{\bar{i}}^{(1)}E^\dag}\rangle|^2\big)^{1/2}\Big)\notag\\
        & \prec \frac{1}{n^2}\bigg[\prod_{j\neq1,i}\frac{1}{|\beta_{j}^{*}|\sqrt{n\eta_{j}}}\bigg]\Big(\frac{1}{n\eta_1\eta_i\eta_*^{1i}|\beta_{1}^{*}||\beta_{i}^{*}|}\Big)\notag\\
        &= \frac{1}{n^2}\prod_{j} \frac{|\beta_{1}^{*}|\sqrt{n\eta_{1}}|\beta_{i}^{*}|\sqrt{n\eta_{i}}}{|\beta_{j}^{*}|\sqrt{n\eta_{j}}}\Big(\frac{1}{n\eta_1\eta_i\eta_*^{1i}|\beta_{1}^{*}||\beta_{i}^{*}|}\Big)\notag\\
        &= \frac{1}{n^2\eta_{*}^{1i}\sqrt{\eta_1\eta_i}}\prod_{j}\frac{1}{|\beta_{j}^{*}|\sqrt{n\eta_{j}}}\notag\\
         &\prec \frac{1}{(n\eta_{*}^{1i})^2}\prod_{j}\frac{1}{|\beta_{j}^{*}|\sqrt{n\eta_{j}}}\notag\\
        & = \frac{\psi}{(n \eta_{*}^{1i})^2},\notag
\end{align}
where $\psi$ is defined as follows;
\begin{align}\label{eqn: definition of psi error}
    \psi:=\prod_{j}\frac{1}{|\beta_{j}^{*}|\sqrt{n\eta_{j}}}.
\end{align}

Now, in \eqref{eq:S^{(1)}_2_ver1}, using the estimates \eqref{eq:S_2^1_Local_law_cal_1}, \eqref{eq:S_2^1_Local_law_cal_2}, \eqref{eq:error_bound_final} along with the notations defined in \eqref{eq:V_1i hat}, we obtain the expression of $n^{-2}S_{2}^{(1)}$ as follows;
\begin{align}\label{eq:S^{(1)}_2}
    &\frac{1}{n^{2}}S^{(1)}_2\\
        =&\frac{1}{n^{2}}\sum_{i\neq1}\mathbb{E} \bigg[\Big\{V_{1i}^{(1)}(z_i,z_j,\eta_i,\eta_j)+V_{1i}^{(1)}(z_i,\bar{z_j},\eta_i,\eta_j)\notag\\
        &+ \mathcal{O}_\prec \Big(\frac{1}{n |z_1-z_i|^4\eta_*^{1i}(\eta_1\eta_i)^{1/2}|\beta_i^{(1)}||\beta_1^{(1)}|}+\frac{1}{n^2 |z_1-z_i|^4(\eta_*^{1i})^2\eta_1\eta_i|\beta_i^{(1)}||\beta_1^{(1)}|}\Big)\Big\}\notag\\
        &\prod_{j\neq1,i}\Upsilon_{j}\bigg]- \frac{1}{n^{2}}\sum_{i\neq1}\mathbb{E}\bigg[\Big\{\langle \underline{G_1^{(1)}A_1^{(1)}EG_i^{(1)}A_i^{(1)}W^{(1)}G_i^{(1)}E^\dag}\notag\\
        &+\underline{G_1^{(1)}A_1^{(1)}EG_{\bar{i}}^{(1)}W^{(1)}(A_i^{(1)})^tG_{\bar{i}}^{(1)}E^\dag}\rangle \Big\}\prod_{j\neq1,i}\Upsilon_{j}\bigg]\notag\\
         =&\frac{1}{n^{2}}\sum_{i \neq 1}\Big\{\widehat{V_{1i}^{(1)}}+ \mathcal{O}_\prec \Big(\frac{1}{n |z_1-z_i|^4\eta_*^{1i}(\eta_1\eta_i)^{1/2}|\beta_i^{(1)}||\beta_1^{(1)}|}\notag\\
         &+ \frac{1}{n^2 |z_1-z_i|^4(\eta_*^{1i})^2\eta_1\eta_i|\beta_i^{(1)}||\beta_1^{(1)}|} \Big) \Big\}\mathbb{E}\Big[\prod_{j\neq1,i}\Upsilon_{j}\Big]+\mathcal{O}_\prec \bigg(\frac{\psi}{(n\eta_*)^2}\bigg)\notag\\ 
       =&\sum_{i \neq 1}\bigg\{\frac{\widehat{V_{1i}^{(1)}}}{n^{2}}\mathbb{E} \Big[\prod_{j\neq1,i} \Upsilon_{j}\Big]+ \mathcal{O}_\prec \bigg(\psi\Big(\frac{1}{(n\eta_*)^2}+\frac{1}{n^2 \eta_*^{1i}|z_1-z_i|^4}
       + \frac{1}{(n\eta_*^{1i})^3 |z_1-z_i|^4} \Big)\bigg)\bigg\}\notag\\    
       =&:\frac{1}{n^2}\sum_{i \neq 1}\widehat{V_{1i}^{(1)}}\mathbb{E}\Big[\prod_{j\neq1,i}\Upsilon_{j}\Big]+error_1\notag,
\end{align}
where $\widehat{V_{1i}^{(1)}}$ is defined below. Similarly, we can evaluate $n^{-2}S_2^{(2)}$ as follows;

\begin{align}\label{eq:S^{(2)}_2}
        &\frac{1}{n^{2}}\sum_{i\neq1}\mathbb{E} \bigg[\langle G_1^{(2)}A_1^{(2)}E\big(G_i^{(2)}A_i^{(2)}+ (A_i^{(2)})^t(G_i^{(2)})^t \big)E^{\dag}\\
        &\quad-G_1^{(2)}A_1^{(2)}E\big(\underline{G_i^{(2)}A_i^{(2)}W^{(2)}G_i^{(2)}}+\underline{(G_i^{(2)})^tW^{(2)}(A_i^{(2)})^t(G_i^{(2)})^t}\big)E^{\dag}\rangle \prod_{j\neq1,i}\Upsilon_{j}\bigg]\notag\\
        &=\sum_{i \neq 1}\bigg\{\frac{\widehat{V_{1i}^{(2)}}}{n^{2}}\mathbb{E} \Big[\prod_{j\neq1,i} \Upsilon_{j}\Big]+ \mathcal{O}_\prec \bigg(\psi\Big(\frac{1}{(n\eta_*)^2}+\frac{1}{n^2 \eta_*^{1i}|z_1-z_i|^4}
        + \frac{1}{(n\eta_*^{1i})^3 |z_1-z_i|^4} \Big)\bigg)\bigg\}\notag\\    
        &=\frac{1}{n^2}\sum_{i \neq 1}\widehat{V_{1i}^{(2)}}\mathbb{E}\Big[\prod_{j\neq1,i}\Upsilon_{j}\Big]+error_1\notag,
\end{align}
where $\widehat{V_{1i}^{(t)}}(z_i,z_j,\eta_i,\eta_j)=V_{1i}^{(t)}(z_i,z_j,\eta_i,\eta_j)+V_{1i}^{(t)}(z_i,\bar{z_j},\eta_i,\eta_j)$ with

    \begin{align}\label{eq:V_1i hat}
        V_{1i}^{(t)}&:=\langle (M^{(t)})_{A_1^{(t)}E}^{z_1,z_i}A_i^{(t)}E^{\dag} \rangle +\langle (M^{(t)})_{A_1^{(t)}E}^{z_1,\bar{z_i}}(A_i^{(t)})^tE^{\dag} \rangle\\
        &\;\;\;\;\;+\langle \mathcal{S} [(M^{(t)})_{A_1^{(t)}E}^{z_1,z_i}A_i^{(t)}](M^{(t)})^{z_i,z_1}_{E^{\dag}}\rangle+\langle \mathcal{S}[ (M^{(t)})_{A_1^{(t)}E}^{z_1,\bar{z_i}}(A_i^{(t)})^t](M^{(t)})^{\bar{z_i},z_1}_{E^{\dag}}\rangle\notag\\
        &=\frac{2m_1^{(t)}m_i^{(t)}\big(2u_1^{(t)}u_i^{(t)}\Re{z_1\bar{z_i}}+(u_1^{(t)}u_i^{(t)}|z_1||z_i|)^2(r_1^{(t)}r_i^{(t)}-4)\big)}{\beta_1^{(t)}\beta_i^{(t)}[1+(u_1^{(t)}u_i^{(t)}||z_1|z_i|)^2-(m_1^{(t)}m_i^{(t)})^2-2u_1^{(t)}u_i^{(t)}\Re{z_1\bar{z_i}}]^2}\notag\\
        &\;\;\;\;\;+\frac{2m_1^{(t)}m_i^{(t)}((m_1^{(t)})^2+(u_1^{(t)})^2|z_1|^2)((m_i^{(t)})^2+(u_i^{(t)})^2|z_i|^2)}{\beta_1^{(t)}\beta_i^{(t)}[1+(u_1^{(t)}u_i^{(t)}||z_1|z_i|)^2-(m_1^{(t)}m_i^{(t)})^2-2u_1^{(t)}u_i^{(t)}\Re{z_1\bar{z_i}}]^2},\notag
    \end{align}

$\text{where } r_i^{(t)} = (m_i^{(t)})^2-(u_i^{(t)})^2|z_i|^2,$ $\beta_i^{(t)}=1-(m_i^{(t)})^2-(u_i^{(t)})^2|z_i|^2.$
\begin{rem}\label{remark: real vs complex case}
    If the matrix $X^{(t)}$ from Condition \ref{Condtion::} is considered to be complex-valued, then $\gamma_1=0$. In this case, the term $T_2^{(1)}$ does not contribute to $I_3^{(1)}$ in \eqref{eq:I_1^{(3)}_final}. Consequently, the terms $(A_i^{(1)})^t(G_i^{(1)})^t$ and $\underline{(G_i^{(1)})^tW^{(1)}(A_i^{(1)})^t(G_i^{(1)})^t}$ won't appear in the expression of $S_{2}^{(1)}$ and $S_{2}^{(2)}$ in \eqref{eq:S^{(1)}_2_ver1}. As a result, $V_{1i}^{(1)}(z_i,\bar{z_j},\eta_i,\eta_j)$ and $V_{1i}^{(2)}(z_i,\bar{z_j},\eta_i,\eta_j)$ do not appear in the expressions for $n^{-2}S_2^{(1)}$ and $n^{-2}S_2^{(2)}$ in \eqref{eq:S^{(1)}_2} and \eqref{eq:S^{(2)}_2}, respectively. Therefore, in this case, we have $\widehat{V_{1i}^{(t)}}(z_i,z_j,\eta_i,\eta_j)=V_{1i}^{(t)}(z_i,z_j,\eta_i,\eta_j)$.
\end{rem}
\subsubsection{Evaluation of $S_{i}^{(t)}$s, when $i=1, 3, 4, 5, 6$}\label{subsec: Evaluation of other S_i}
Let us first start with $S_3^{(1)}$ and $S_3^{(2)}.$ We shall follow the local law stated in Theorem \ref{Thm: Local_Law}

\begin{align}
    |\langle (G_1^{(t)}-M_1^{(t)})A \rangle| \prec \frac{C_{\epsilon}\|A\|}{n \eta_1} ,
\end{align}
which gives us the following estimate;

\begin{align}\label{eq:mixed_terms_approx}
      &\langle G_1^{(t)}A_1^{(t)}EG_i^{(s)}A_i^{(s)}E^{\dag}\rangle\\
        &=M_1^{(t)}A_1^{(t)}EM_i^{(s)}A_i^{(s)}E^{\dag} 
        + M_1^{(t)}A_1^{(t)}E (G_i^{(s)} - M_i^{(s)})A_i^{(s)}E^{\dag}\notag\\
        &\;\;+ (G_1^{(t)} - M_1^{(t)})A_1^{(t)}E M_i^{(s)}A_i^{(s)}E^{\dag}
        + (G_1^{(t)} - M_1^{(t)})A_1^{(t)}E (G_i^{(s)} - M_i^{(s)})A_i^{(s)}E^{\dag}\notag\\
        &=M_1^{(t)}A_1^{(t)}EM_i^{(s)}A_i^{(s)}E^{\dag} +\mathcal{O}_\prec\Big(\frac{1}{|\beta_1^*||\beta_i^*|n \eta_*^{1i}}\Big).\notag
\end{align}

Now, we shall use this estimate to evaluate $S_3^{(1)}$ and $S_3^{(2)}$ as follows;
\begin{align}\label{S_3^{(1)}+S_3^{(2)}}
    &\frac{1}{n^{2}}(S_3^{(1)}+S_3^{(2)})\\
         =& \frac{1}{n^{2}}\sum_{i\neq1}\mathbb{E}\bigg[\big(\langle G_1^{(1)}A_1^{(1)}EG_i^{(2)}A_i^{(2)}E^{\dag}\rangle +\langle G_1^{(2)}A_1^{(2)}EG_i^{(1)}A_i^{(1)}E^{\dag}\rangle \big)\prod_{j\neq1,i}\Upsilon_{j}\bigg]\notag\\
          =&\frac{1}{n^{2}}\sum_{i\neq1}\mathbb{E}\bigg[\bigg(\langle M_1^{(1)}A_1^{(1)}EM_i^{(2)}A_i^{(2)}E^{\dag}\rangle +\langle M_1^{(2)}A_1^{(2)}EM_i^{(1)}A_i^{(1)}E^{\dag}\rangle\notag\\
          &+\mathcal{O}_\prec\Big(\frac{1}{|\beta_1^*||\beta_i^*|n \eta_*^{1i}}\Big)\bigg)\prod_{j\neq1,i}\Upsilon_{j}\bigg]\notag\\
            =&\sum_{i\neq1}\Bigg\{\frac{1}{n^{2}}\mathbb{E}\bigg[\Big(\langle M_1^{(1)}A_1^{(1)}E_1M_i^{(2)}A_i^{(2)}E_2\rangle+\langle M_1^{(1)}A_1^{(1)}E_2M_i^{(2)}A_i^{(2)}E_1\rangle \notag\\
           &+\langle M_1^{(2)}A_1^{(2)}E_1M_i^{(1)}A_i^{(1)}E_2\rangle+\langle M_1^{(2)}A_1^{(2)}E_2M_i^{(1)}A_i^{(1)}E_1\rangle\Big)\prod_{j\neq1,i}\Upsilon_{j}\bigg]\notag\\
           &+\mathcal{O}_\prec\bigg(\frac{\psi |\beta_1^*||\beta_i^*| \sqrt{\eta_1\eta_i}}{n^2 \eta_*^{1i}|\beta_1^*||\beta_i^*|}\bigg)\Bigg\}\notag\\
           =&\sum_{i\neq1}\Bigg\{\frac{1}{n^{2}}\bigg(\frac{1}{\beta_1^{(1)} \beta_i^{(2)}} \big(4m_i^{(2)}m_1^{(1)} u_i^{(2)}u_1^{(1)}z_i\overline{z_1}\big)+\frac{1}{\beta_1^{(1)} \beta_i^{(2)}}\big(4m_i^{(2)}m_1^{(1)}u_i^{(2)}u_1^{(1)}\overline{z_i}z_1\big)\notag\\
           &+\frac{1}{\beta_1^{(2)} \beta_i^{(1)}} \big(4m_i^{(1)}m_1^{(2)} u_i^{(1)}u_1^{(2)}z_i\overline{z_1}\big)+\frac{1}{\beta_1^{(2)} \beta_i^{(1)}}\big(4m_i^{(1)}m_1^{(2)}u_i^{(1)}u_1^{(2)}\overline{z_i}z_1\big)\bigg)\notag\\
           &\;\;\mathbb{E}\Big[\prod_{j\neq1,i}\Upsilon_{j}\Big]+\mathcal{O}_\prec\bigg(\frac{\psi \sqrt{\eta_1\eta_i}}{n^2 \eta_*^{1i}}\bigg)\Bigg\}\notag\\
           =&:\frac{1}{n^{2}}\sum_{i\neq 1}\Bigg\{\bigg(\frac{8m_i^{(2)}m_1^{(1)} u_1^{(1)}u_i^{(2)}\Re{(\overline{z_1}z_i)}}{\beta_i^{(2)}\beta_1^{(1)}}+\frac{8m_i^{(1)}m_1^{(2)} u_i^{(1)}u_1^{(2)}\Re{(\overline{z_1}z_i)}}{\beta_i^{(1)}\beta_1^{(2)}}\bigg)\notag\\
           &\;\;\mathbb{E}\Big[\prod_{j\neq1,i}\Upsilon_{j}\Big]\Bigg\}\notag+error_2.\notag
\end{align}
Similarly, we can evaluate $S_4^{(1)}$ and $S_4^{(2)}$ as follows; 
\begin{align}\label{S_4^{(1)}+S_4^{(2)}}
    & \frac{1}{n^{2}}(S_4^{(1)}+S_4^{(2)})\\
         =& \sum_{i\neq1}\Bigg\{\frac{1}{n^{2}}\bigg(\frac{(1-\beta_1^{(1)})}{\beta_1^{(1)} \beta_i^{(2)}}\big((m_i^{(2)})^2+(u_i^{(2)})^2|z_i|^2\big) \notag\\
         &+\frac{(1-\beta_1^{(1)})}{\beta_i^{(2)}\beta_1^{(1)}}\big((m_i^{(2)})^2+(u_i^{(2)})^2|z_i|^2\big)+\frac{(1-\beta_1^{(2)})}{\beta_1^{(2)} \beta_i^{(1)}}\big((m_i^{(1)})^2+(u_i^{(1)})^2|z_i|^2\big)
         \notag\\
         &+\frac{(1-\beta_1^{(2)})}{\beta_i^{(1)}\beta_1^{(2)}}\big((m_i^{(1)})^2+(u_i^{(1)})^2|z_i|^2 \big)\bigg)\mathbb{E}\Big[\prod_{j\neq1,i}\Upsilon_{j}\Big]+\mathcal{O}_\prec\bigg(\frac{\psi \sqrt{\eta_1\eta_i}}{n^2 \eta_*^{1i}}\bigg)\Bigg\}\notag\\
         =& \sum_{i\neq1}\Bigg\{\frac{1}{n^{2}}\bigg(\frac{2(1-\beta_1^{(1)})(1-\beta_i^{(2)})}{\beta_1^{(1)} \beta_i^{(2)}}
         +\frac{2(1-\beta_1^{(2)})(1-\beta_i^{(1)})}{\beta_1^{(2)} \beta_i^{(1)}}\bigg)
        \mathbb{E}\Big[\prod_{j\neq1,i}\Upsilon_{j}\Big]\notag\\
         &+\mathcal{O}_\prec\bigg(\frac{\psi \sqrt{\eta_1\eta_i}}{n^2 \eta_*^{1i}}\bigg)\Bigg\}\notag\\
         =& \frac{1}{n^{2}}\sum_{i\neq1}(-2)\big\{ \partial_{\eta_1}(m_1^{(1)}) \partial_{\eta_i}(m_i^{(2)})
         + \partial_{\eta_1}(m_1^{(2)}) \partial_{\eta_i}(m_i^{(1)})\big\}\mathbb{E}\Big[\prod_{j\neq1,i}\Upsilon_{j}\Big]+error_2\notag,
\end{align}
where in the last equality, we have used $(1-\beta^{(t)})/\beta^{(t)}= -\iota \partial_\eta(m^{(t)})$ from \eqref{eq:m^t_derivative}.
Now, let us move to evaluate the terms $S_1^{(1)}, S_1^{(2)}.$ As per the definition,
\begin{align*}
    S_1^{(1)}&={\sum_{ab}}'\mathbb{E}\bigg[(G_1^{(1)})_{ab}(G_1^{(1)}A_1^{(1)})_{ab}\prod_{i\neq1} \Upsilon_{i} \bigg]\\
        &= 2n\mathbb{E}\bigg[\langle G_1^{(1)}A_1^{(1)}E(G_1^{(1)})^tE^{\dag}\rangle \prod_{i\neq1} \Upsilon_{i} \bigg].\\
\end{align*}

We observe that $\langle G_1^{(1)}A_1^{(1)}E(G_1^{(1)})^tE^{\dag}\rangle$ is same as the expression in the second equality in \eqref{eq:F^{(t)}}. Therefore, using \eqref{eq:F^{(t)}}, we obtain
\begin{align}\label{eq:S_1^{(1)}}
    S_1^{(1)}\notag\\
    =&{\sum_{ab}}'\mathbb{E}\bigg[(G_1^{(1)})_{ab}(G_1^{(1)}A_1^{(1)})_{ab}\prod_{i\neq1} \Upsilon_{i} \bigg]\notag\\
        =&\bigg\{\frac{in}{2}\partial_{\eta_1}\log\big(1-(u_1^{(1)})^2+2(u_1^{(1)})^3|z_1|^2-(u_1^{(1)})^2(z_1^2+\bar{z_1}^2)\big)\notag\\
        &+ \mathcal{O}_\prec \Big(\frac{1}{|z_1-\bar{z}_1|^2}\frac{1}{|\beta_1^{(1)}|(n\eta_1)^2}\Big) \bigg\}\mathbb{E}\Big[\prod_{i\neq1} \Upsilon_{i} \Big]\notag\\
         =&\bigg\{\frac{in}{2}\partial_{\eta_1}\log\big(1-(u_1^{(1)})^2+2(u_1^{(1)})^3|z_1|^2-(u_1^{(1)})^2(z_1^2+\bar{z_1}^2)\big)\bigg\}\mathbb{E}\Big[\prod_{i\neq1} \Upsilon_{i} \Big]\notag\\
         &+\mathcal{O}_\prec\bigg(\frac{\psi}{|\Im{z_1}|(n\eta_1)^{3/2}}\bigg)\notag\\
        =&:\frac{in}{2}\partial_{\eta_1}\log\big(1-(u_1^{(1)})^2+2(u_1^{(1)})^3|z_1|^2-(u^{(1)})^2(z_1^2+\bar{z_1}^2)\big)\mathbb{E}\Big[\prod_{i\neq1} \Upsilon_{i}\Big]\\
        &+ error_3,\notag
    \end{align}
    
Similarly, we can evaluate $S_1^{(2)}$ as well.
Now, for all the other terms in \eqref{eq:I^{(1)}+I^{(2)}Noataions}, namely $S_5^{(1)},S_5^{(2)},S_6^{(1)},\text{ and }S_6^{(2)}$, we shall use the following bound from Lemma \ref{lemma:overlinebound};

\begin{align}\label{eq:bound for overline terms}
        &\mathbb{E}|\langle \underline{G^{(t)}_1A^{(t)}_1EG_i^{(s)}A_i^{(s)}W^{(s)}G_i^{(s)}E^{\dag}} \rangle|^2\\
        &=\mathcal{O}_{\prec}\bigg(\frac{1}{|\beta_1^{(t)}|^{2}|\beta_i^{(s)}|^{2} n^2\eta_i^4}\bigg),\text{  for } 1\leq t\neq s\leq 2.\notag
    \end{align}
    
Now, using \eqref{eq:bound for overline terms}, similar to the \eqref{eq:error_bound_final}, we have
\begin{align}\label{eq:S_5+S_6}
    &\frac{1}{n^2}\big(S_5^{(1)}+S_5^{(2)}+S_6^{(1)}+S_6^{(2)}\big)\\
    &= \mathcal{O}_\prec \Big( \frac{\psi \sqrt{\eta_1\eta_i}}{(n\eta_i)^2}+\frac{\psi \sqrt{\eta_1\eta_i}}{(n\eta_1)^2}\Big)\notag\\
    &=\mathcal{O}_\prec\Big(\frac{\psi\sqrt{\eta_1\eta_i}}{(n\eta_*^{1i})^2}\Big)=:error_4.\notag
\end{align}
Thus, from \eqref{eq:I^{(1)}+I^{(2)}Noataions}, \eqref{eq:S^{(1)}_2}, \eqref{eq:S^{(2)}_2}, \eqref{S_3^{(1)}+S_3^{(2)}}, \eqref{S_4^{(1)}+S_4^{(2)}}, \eqref{eq:S_1^{(1)}},\eqref{eq:S_5+S_6}, and using the expressions of $\xi_1^{(1)}, \xi_1^{(2)}$ from Lemma \ref{Lemma:Exp or resolvent} we have
\begin{align}\label{eq:I_1+I_2_final}
    &\mathbb{E}\bigg[\prod_{i\in[p]}c\langle -\underline{W^{(1)}G_i^{(1)}}A_i^{(1)}-\xi_i^{(1)}\rangle +d\langle -\underline{W^{(2)}G_i^{(2)}}A_i^{(2)}-\xi_i^{(2)}\rangle\bigg]\notag\\
        &=cT_5^{(1)}+dT_5^{(2)}-c\langle\xi_1^{(1)}\rangle\mathbb{E}\Big[\prod_{i\neq1}\Upsilon_{i}\Big]-d\langle\xi_1^{(2)}\rangle\mathbb{E} \bigg[\prod_{i\neq1} \Upsilon_{i}\bigg]
        +\frac{c}{2n^2}S^{(1)}_1\notag\\
        &\;\;\;\;\; +\frac{d}{2n^2}S^{(2)}_1
        +\frac{c^2}{2n^2}S^{(1)}_2
        +\frac{d^2}{2n^2}S^{(2)}_2+ \frac{cd\gamma}{2n^2}\big(S^{(1)}_3+S^{(2)}_3+S^{(1)}_5
        +S^{(2)}_5\big)
        \notag\\
        &\;\;\;\;\;+\frac{cd\rho}{2n^2}\big(S^{(1)}_4 +S^{(2)}_4+S^{(1)}_6+S^{(2)}_6\big)\notag\\
        &=cT_5^{(1)}+dT_5^{(2)}-c\langle\frac{i}{4n}\partial_{\eta_1}\big(1-(u_1^{(1)})^2+2(u_1^{(1)})^3|z_1|^2\notag\\
        &\;\;\;\;\;-(u_1^{(1)})^2(z_1^2+\bar{z}_1^2)\big) \rangle\mathbb{E}\Big[\prod_{i\neq1}\Upsilon_{i}\Big]
        +c\langle\frac{i (\kappa_4)_{11}}{4n} \partial_{\eta_1}(m_1^{(1)})^4
   \rangle\mathbb{E}\Big[\prod_{i\neq1}\Upsilon_{i}\Big]\notag\\
   &\;\;\;\;\;-d\langle\frac{i}{4n}\partial_{\eta_1}\big(1-(u_1^{(2)})^2+2(u_1^{(2)})^3|z_1|^2-(u_1^{(2)})^2(z_1^2+\bar{z}_1^2)\big) \rangle\mathbb{E}\Big[\prod_{i\neq1}\Upsilon_{i}\Big]\notag\\
        &\;\;\;\;\;
        +d\langle\frac{i (\kappa_4)_{22}}{4n} \partial_{\eta_1}(m_1^{(2)})^4
   \rangle\mathbb{E}\Big[\prod_{i\neq1}\Upsilon_{i}\Big]\notag\\
        &\;\;\;\;\;+\frac{c}{2n^2}\frac{in}{2}\partial_{\eta}\log\big(1-(u_1^{(1)})^2+2(u_1^{(1)})^3|z_1|^2-(u^{(1)})^2(z_1^2+\bar{z_1}^2)\big)\mathbb{E}\Big[\prod_{i\neq1} \Upsilon_{i}\Big]\notag\\
        &\;\;\;\;\;+ error_3+\frac{d}{2n^2}\frac{in}{2}\partial_{\eta}\log\big(1-(u_1^{(2)})^2+2(u_1^{(2)})^3|z_1|^2-(u^{(2)})^2(z_1^2+\bar{z_1}^2)\big)\notag\\
        &\;\;\;\;\;\;\;\;\mathbb{E}\Big[\prod_{i\neq1} \Upsilon_{i}\Big]+ error_3+\sum_{i\neq 1}\frac{c^2\widehat{V_{1i}^{(1)}}}{2n^2}\mathbb{E}\Big[\prod_{j\neq1,i}\Upsilon_{j}\Big]\notag\\
        &\;\;\;\;\;+\sum_{i\neq 1}\frac{d^2\widehat{V_{1i}^{(2)}}}{2n^2}\mathbb{E}\Big[\prod_{j\neq1,i}\Upsilon_{j}\Big]+error_1+\sum_{i\neq 1}\Bigg\{\frac{8cd\gamma m_i^{(2)}m_1^{(1)} u_1^{(1)}u_i^{(2)}\Re{(\overline{z_1}z_i)}}{2n^2\beta_i^{(2)}\beta_1^{(1)}}\notag\\
       &\;\;\;\;\;+\frac{8cd\gamma m_i^{(1)}m_1^{(2)} u_i^{(1)}u_1^{(2)}\Re{(\overline{z_1}z_i)}}{2n^2\beta_i^{(1)}\beta_1^{(2)}}\Bigg\}\mathbb{E}\Big[\prod_{j\neq1,i}\Upsilon_{j}\Big]+error_2\notag\\
        &\;\;\;\;\;+ \frac{cd\rho}{2n^2}\sum_{i\neq1}(-2)\big\{ \partial_{\eta_1}(m_1^{(1)}) \partial_{\eta_i}(m_i^{(2)})
         + \partial_{\eta_1}(m_1^{(2)}) \partial_{\eta_i}(m_i^{(1)})\big\}\mathbb{E}\Big[\prod_{j\neq1,i}\Upsilon_{j}\Big]\notag\\
         &\;\;\;\;\;+error_2+error_4\notag\\
        &=cT_5^{(1)}+dT_5^{(2)}
        +c\frac{\iota (\kappa_4)_{11}}{4n} \partial_{\eta_1}(m_1^{(1)})^4
         \mathbb{E}\Big[\prod_{i\neq1}\Upsilon_{i}\Big]
        +d\frac{\iota (\kappa_4)_{22}}{4n} \partial_{\eta_1}(m_1^{(2)})^4
        \\
        &\;\;\;\;\;\;\;\mathbb{E}\Big[\prod_{i\neq1}\Upsilon_{i}\Big]+ \sum_{i\neq 1}\frac{c^2\widehat{V_{1i}^{(1)}}}{2n^2}\mathbb{E}\Big[\prod_{j\neq1,i}\Upsilon_{j}\Big] +\sum_{i\neq 1}\frac{d^2\widehat{V_{1i}^{(2)}}}{2n^2}\mathbb{E}\Big[\prod_{j\neq1,i}\Upsilon_{j}\Big]\notag\\
       &\;\;\;\;\;+\sum_{i\neq 1}\Bigg\{\frac{8cd\gamma m_i^{(2)}m_1^{(1)} u_1^{(1)}u_i^{(2)}\Re{(\overline{z_1}z_i)}}{2n^2\beta_i^{(2)}\beta_1^{(1)}}+\frac{8cd\gamma m_i^{(1)}m_1^{(2)} u_i^{(1)}u_1^{(2)}\Re{(\overline{z_1}z_i)}}{2n^2\beta_i^{(1)}\beta_1^{(2)}}\Bigg\}\notag\\
        &\;\;\;\;\;\;\;\mathbb{E}\Big[\prod_{j\neq1,i}\Upsilon_{j}\Big]+ \frac{cd\rho}{n^2}\sum_{i\neq1}\Big( -\partial_{\eta_1}(m_1^{(1)}) \partial_{\eta_i}(m_i^{(2)})
         - \partial_{\eta_1}(m_1^{(2)}) \partial_{\eta_i}(m_i^{(1)}) \Big)\notag\\
        &\;\;\;\;\;\;\;
         \mathbb{E}\Big[\prod_{j\neq1,i}\Upsilon_{j}\Big]+error_1+error_2+error_3+error_4\notag.
\end{align}

\subsection{Evaluation of $T_{5}^{(t)}$ terms in \eqref{eq:I^{(1)}+I^{(2)}Noataions}}

We evaluate $T_5^{(1)}$ on the RHS of \eqref{eq:I_1^{(3)}_final} for $k =2$ and $k\geq 3$ separately. Recall $T_5^{(1)}$ from \eqref{eq:I_1^{(3)}_final}
\begin{align}\label{eq:RHSofeq:cummulant_exp_I_1^{(3)}_1}
    &\sum_{ab}\sum_{k\geq 2}\sum_{\alpha\in\{(ab)_1,(ba)_1,(ab)_2,(ba)_2\}^k}\frac{\kappa((ba)_1,\alpha)}{k!}\mathbb{E}\bigg[\partial_\alpha\Big(\langle-\Delta^{ba}G_1^{(1)}A_1^{(1)}\rangle\prod_{i\neq1}\Upsilon_{i}\Big)\bigg].
\end{align}
Now, let us consider the case when $k=2$. In that case, letting $\alpha=(\alpha_{1}, \alpha_{2}),$ we have the following;

\begin{align}\label{eqn: k=2 derivative written as H sum}
    &\partial_{\alpha_1, \alpha_2}\Big(\langle-\Delta^{ba}G_1^{(1)}A_1^{(1)}\rangle \prod_{i\neq1}\Upsilon_{i}\Big)\\
    &=\big(\langle-\Delta^{ba}G_1^{(1)}A_1^{(1)}\rangle\big)^{''}\Big(\prod_{i\neq1}\Upsilon_{i}\Big)\notag\\
    &\;\;\;\;\;+\big(\langle-\Delta^{ba}G_1^{(1)}A_1^{(1)}\rangle\big)^{'} \Big(\prod_{i\neq1}\Upsilon_{i}\Big)^{'}\notag\\
    &\;\;\;\;\;+\langle-\Delta^{ba}G_1^{(1)}A_1^{(1)}\rangle \Big(\prod_{i\neq1} \Upsilon_{i}\Big)^{''}\notag\\
    &=:H_1+H_2+H_3.\notag
\end{align}

Recall from page \pageref{eqn: WGA in terms of S_1, S_2} that $\partial_{\alpha_{i}}$ denotes the derivative with respect to $w_{\alpha_{i}}^{(t)}.$ In general, we use the shorthand notation $\alpha_{i}$ to denote $w_{\alpha_{i}}^{(t)}.$ The same convention is used in the above, where $\partial_{\alpha_{i}}f$ is denoted by $f',$ $\partial_{\alpha_{i}}\partial_{\alpha_{j}}f$ is denoted by $f''$ and so on.

Notice that $ H_1 $ is non-zero only if both $\alpha_{1}, \alpha_{2}$, belong to $ \{(ab)_1, (ba)_1\} $. In that case, we have the following expression;
\begin{align}\label{eq:H_1}
    H_1 &= \big(\langle-\Delta^{ba}G_1^{(1)}A_1^{(1)}\rangle\big)^{''}\prod_{i\neq1}\Upsilon_{i}\\
    &= \big(\partial_{\alpha_1,\alpha_2}\langle-\Delta^{ba}G_1^{(1)}A_1^{(1)}\rangle\big) \prod_{i\neq1}\Upsilon_{i}\notag\\
    &=\langle-\Delta^{ba}G_1^{(1)}(\Delta G_{1}^{(1)})^{2}A_1^{(1)}\rangle \prod_{i\neq1}\Upsilon_{i}\notag,
\end{align}
where $(\Delta G_{1}^{(1)})^{k}=\Delta G_{1}^{(1)}\Delta G_{1}^{(1)}\cdots \Delta G_{1}^{(1)},$ and each $\Delta$ is either $\Delta^{ab}$ or $\Delta^{ba}$ depending on the context.

Similarly, $H_2$ is non-zero when at least one of $ \alpha_1 $ and $ \alpha_2 $ does not belong to $ \{(ab)_2, (ba)_2\} .$ Thus, taking derivatives with respect appropriate variables, we have
\begin{align}\label{eq:H_2}
    &H_2= \big(\langle-\Delta^{ba}G_1^{(1)}A_1^{(1)}\rangle\big)^{'} \Big(\prod_{i\neq1}\Upsilon_{i}\Big)^{'}\notag\\
    &= \langle\Delta^{ba}G_1^{(1)}\Delta G_1^{(1)}A_1^{(1)}\rangle \sum_{j =2}^p\left(c\langle -\underline{W^{(1)} G_j^{(1)}\Delta G_j^{(1)}}A_j^{(1)} + \Delta G_j^{(1)}A_j^{(1)} \rangle  \right)  \prod_{i\neq 1,j}\Upsilon_i \\
    &\;\;\;\;\;+ \langle\Delta^{ba}G_1^{(1)}\Delta G_{1}^{(1)}A_1^{(1)}\rangle \sum_{j =2}^p\left(d\langle  \underline{-W^{(2)}G_j^{(2)}\Delta G_j^{(2)}}A_j^{(2)} + \Delta G_j^{(2)}A_j^{(2)} \rangle  \right)  \prod_{i\neq1,j}\Upsilon_i \notag.
\end{align}

Finally, no particular choice of $\alpha_{1}, \alpha_{2}$ completely nullifies $H_{3}.$ Therefore, taking all possible derivatives,
\begin{align}\label{eq:H_3}
   &H_3= \langle-\Delta^{ba}G_1^{(1)}A_1^{(1)}\rangle \Big(\prod_{i\neq1}\Upsilon_{i}\Big)^{''}\\
   &=\langle-\Delta^{ba}G_1^{(1)}A_1^{(1)}\rangle \sum_{j =2}^p\big(2c\langle \underline{W^{(1)}(G_j^{(1)}\Delta)^2G_j^{(1)}}A_j^{(1)}-\Delta (G_j^{(1)})^2A_j^{(1)}\rangle\big) \prod_{i\neq 1, j}\Upsilon_{i}\notag\\
   &\;\;\;\;\;+\langle-\Delta^{ba}G_1^{(1)}A_1^{(1)}\rangle \sum_{j =2}^p\big(2d\langle \underline{W^{(2)}(G_j^{(2)}\Delta)^2G_j^{(2)}}A_j^{(2)}-(\Delta G_j^{(2)})^2A_j^{(2)}\rangle \big)  \prod_{i\neq 1, j}\Upsilon_{i}\notag\\
   &\;\;\;\;\;+\langle-\Delta^{ba}G_1^{(1)}A_1^{(1)}\rangle \sum_{\substack{j,l = 2 \\ j \neq l}}^p
   \big(c\langle \underline{-W^{(1)}G_j^{(1)}\Delta G_j^{(1)}}A_j^{(1)})+\Delta G_j^{(1)}A_j^{(1)}\rangle \big) \notag\\
   &\;\;\;\;\;\;\;
   \big(d\langle \underline{-W^{(2)}G_l^{(2)}\Delta G_l^{(2)}}A_l^2+\Delta G_l^{(2)}A_l^2)\rangle  \big)\prod_{i\neq 1, j, l}\Upsilon_{i}\notag.
\end{align}

When we perform the $\alpha$-derivatives in \eqref{eq:RHSofeq:cummulant_exp_I_1^{(3)}_1} using the Leibniz rule, we get sum of product of normalized traces. Each product in the sum is a product of $p$ terms. Among those $p$ terms, at least one or more (say, $r$ many) are of the form $\langle \big(\Delta G_i^{(1)}\big)^{k_i} A_i^{(1)} \rangle$, $\langle \underline{W^{(1)}\big(G_i^{(1)} \Delta\big)^{k_i} G_i^{(1)}} A_i^{(1)} \rangle$, or $\langle \underline{W^{(2)}\big(G_i^{(2)} \Delta\big)^{k_i} G_i^{(2)}} A_i^{(2)}\rangle$ with $k_i \geq 0, \sum k_i=k+1.$ Rest (i.e., $p-r$ many) are of the form $\langle \underline{W^{(1)} G_i^{(1)} A_i^{(1)}}+\xi_i^{(1)} \rangle + \langle \underline{W^{(2)} G_i^{(2)} A_i^{(2)}}+\xi_i^{(2)} \rangle$. For example, in the expression of $H_{3},$ we have a term of the form $\langle\Delta^{ba}G_{1}^{(1)}A_{1}^{(1)}\rangle\langle (\Delta G_{j}^{(1)})^{2}A_{j}^{(1)}\rangle\prod_{i\neq 1, j}\Upsilon _{i}.$ For this particular term, $k=2$ and $r=2.$

In the rest of this section, we shall analyze the cases for different values of $k$ and $r$. We shall see that the significant contribution comes only from the case when $k=3.$ Prior to that, we rewrite the self-renormalized terms as

\begin{align}\label{eq:W^r}
&\langle \underline{ W^{(t)}(G^{(t)} \Delta)^k G^{(t)}}A^{(t)}\rangle \notag\\
& =\langle \underline{  G^{(t)}A^{(t)}W^{(t)}(G^{(t)} \Delta)^k}\rangle \notag\\
& =\langle\underline{G^{(t)} A^{(t)} W^{(t)} G^{(t)}} \Delta(G^{(t)} \Delta)^{k-1}\rangle+\sum_{j=1}^{k-1}\langle G^{(t)} A^{(t)} S\left[(G^{(t)} \Delta)^j G^{(t)}\right](G^{(t)} \Delta)^{k-j}\rangle \notag\\
& =\langle\underline{G^{(t)} A^{(t)} W^{(t)} G^{(t)}} \Delta(G^{(t)} \Delta)^{k-1}\rangle+\sum_{j=1}^{k-1}\langle G^{(t)} A^{(t)} E(G^{(t)} \Delta)^{k-j}\rangle\\
&\quad\quad\langle G^{(t)} E^{\dag}(G^{(t)} \Delta)^j\rangle.\notag
\end{align}

Now, we analyze the cases for different values of $k$ and $r.$

\subsubsection{Contribution of \eqref{eq:RHSofeq:cummulant_exp_I_1^{(3)}_1} to $T_{5}^{(1)}$ when $k=2, r=1$} This is essentially $H_{1}$ from \eqref{eq:H_1}. We argue that the term $\langle\Delta G_1^{(1)} \Delta G_1^{(1)} \Delta G_1^{(1)} A_1^{(1)}\rangle$ will have insignificant contribution. Since it is a product of three $\Delta^{\alpha_{i}}G_{1}^{(1)}$s, and $\alpha_{i}\in \{(ab)_{1}, (ba)_{1}\},$ at least one of the term in this product will have an off-diagonal entry of $G_{1}^{(1)}.$ For example, the term $\langle\Delta^{ba} G_1^{(1)} \Delta^{ab} G_1^{(1)} \Delta^{ba} G_1^{(1)} A_1^{(1)}\rangle$ simplifies to $(2n)^{-1}\sum' (G_{1}^{(1)})_{aa}(G_{1}^{(1)})_{bb}(G_{1}^{(1)}A_{1}^{(1)})_{ab},$ which contains an off-diagonal entry. Such a term (along with the prefactor $n^{-3/2}$) can further be estimated by proceeding in the same way as in \eqref{eq:exp_local_law}, as follows;

\begin{align}\label{eq:k=2,r=1}
&n^{-1-3 / 2} \sum_{a \leq n} \sum_{b>n} (G_1^{(1)})_{a a}  (G_1^{(1)})_{bb}(G^{(1)}_1 A^{(1)}_1)_{a b}\\
& =\frac{m^2}{n^{5 / 2}}\langle E_1 \boldsymbol{1}, G^{(1)}_1 A^{(1)}_1 E_2 \boldsymbol{1}\rangle +\mathcal{O}_{\prec}\bigg(\frac{1}{n^{1 / 2}} \frac{1}{|\beta_1^{(1)}|(n \eta_1)^{3 / 2}}\bigg)\notag \\
& =\mathcal{O}_{\prec}\bigg(\frac{1}{|\beta_1^{(1)}|n^{3 / 2}}+\frac{1}{|\beta_1^{(1)}|n^2 \eta_1^{3 / 2}}\bigg)\notag.
\end{align}

\subsubsection{Contribution of \eqref{eq:RHSofeq:cummulant_exp_I_1^{(3)}_1} to $T_{5}^{(1)}$ when $k=2, r=2$} From \eqref{eq:H_2} and \eqref{eq:H_3}, we see that this case consists of the expression of $H_{2}$ and the first two terms of $H_{3},$ which is written as follows;

\begin{align}\label{eqn: H_2 plus first two terms of H_3}
& \langle \Delta^{ba} G_1^{(1)} \Delta G_1^{(1)} A_1^{(1)}\rangle \langle\Delta G_i^{(1)} A_i^{(1)}- \underline{W^{(1)} G_i^{(1)} \Delta G_i^{(1)}} A_i^{(1)}\rangle\\
&\;\;\;+ \langle \Delta^{ba} G_1^{(1)} \Delta G_1^{(1)} A_1^{(1)}\rangle \langle\Delta G_i^{(2)} A_i^{(2)}- \underline{W^{(2)} G_i^{(2)} \Delta G_i^{(2)}} A_i^{(2)}\rangle \notag\\
&\;\;\;+\langle-\Delta^{ba} G_1^{(1)} A_1^{(1)}\rangle(2\langle-\Delta G_i^{(1)} \Delta G_i^{(1)} A_i^{(1)}+\underline{W^{(1)} G_i^{(1)} \Delta G_i^{(1)} \Delta G_i^{(1)}} A_i^{(1)}\rangle)\notag\\
&\;\;\;+\langle-\Delta^{ba} G_1^{(1)} A_1^{(1)}\rangle\big(2\langle-\Delta G_i^{(2)} \Delta G_i^{(2)} A_i^{(2)}+\underline{W^{(2)} G_i^{(2)} \Delta G_i^{(2)}\Delta G_i^{(2)}} A_i^{(2)}\rangle\big).\notag
\end{align}
In the rest of the subsection, we find the most contributing factors of the above expression. The essential tools are the isotropic local law, and accumulation of the diagonal terms from $G_{1}^{(1)}$ or $G_{1}^{(1)}A_{1}^{(1)}$ in the product, and the following estimate \eqref{eq:G^iBW^iG^ibound}
\begin{equation}\label{eq:G^iBW^iG^ibound}
    |\langle \boldsymbol{x}, \underline{G^{(t)}BW^{(t)}G^{(t)}} \boldsymbol{y}\rangle| \prec \frac{\norm{x} \norm{y}\norm{B}}{n^{1/2}\eta^{3/2}},
\end{equation}
which can be obtained by following the techniques in Lemma \ref{lemma:overlinebound}. Let's see how these principles are applied to obtain the estimates. For example, the highest contribution from the first four factors may arise from one of the following scenarios;
\begin{align}\label{eq:k=2,r=2_first four term}
& \frac{1}{n^{7 / 2}} {\sum_{ab}}'\Big((G_1^{(1)})_{a a}(G_1^{(1)} A_1^{(1)})_{b b}\big((G_i^{(1)} A_i^{(1)})_{a b}-(\underline{G_i^{(1)} A_i^{(1)} W^{(1)} G_i^{(1)}})_{a b}\big)\\
&\;\;\;\;\;+(G_1^{(1)})_{a a}(G_1^{(1)} A_1^{(1)})_{b b}\big((G_i^{(2)} A_i^{(2)})_{a b}-(\underline{G_i^{(2)} A_i^{(2)} W^{(2)} G_i^{(2)}})_{a b}\big)\Big)\notag\\
&=\mathcal{O}_\prec\bigg(\frac{1}{n^3 \eta_1 \eta_i^{3 / 2}|\beta_1^{(1)}||\beta_i^{(1)}|}\bigg).\notag
\end{align}
The above is obtained from the following estimates \eqref{eq:GaGAbbGAab} and \eqref{eq:GaaGAbbGAWGab}. The following calculations are similar to \eqref{eq:exp_local_law}.
\begin{align}\label{eq:GaGAbbGAab}
   & \frac{1}{n^{7/2}}{\sum_{ab}}'(G_1^{(1)})_{aa}(G_1^{(1)} A_1^{(1)})_{b b}(G_i^{(1)} A_i^{(1)})_{a b}\\
   =&\frac{1}{n^{7/2}}{\sum_{ab}}'\bigg\{\big(m_1^{(1)}+(G^{(1)}_1-M^{(1)}_1)_{aa}\big)\Big((M_1^{(1)} A_1^{(1)})_{b b}\notag\\
   &+\big((G^{(1)}_1-M^{(1)}_1)A^{(1)}_1\big)_{bb}\Big)\Big((M_i^{(1)} A_i^{(1)})_{ab}+\big((G_i^{(1)}-M_i^{(1)}) A_i^{(1)}\big)_{ab}\Big)\bigg\}\notag\\
     = & \frac{1}{n^{7/2}}{\sum_{ab}}'\bigg\{m_1^{(1)}(M_1^{(1)} A_1^{(1)})_{bb}
     \big((M_i^{(1)} A_i^{(1)})_{ab}+\big((G_i^{(1)}-M_i^{(1)}) A_i^{(1)}\big)_{ab}\big) \notag\\
    & + (G^{(1)}_1-M^{(1)}_1)_{aa}(M_1^{(1)} A_1^{(1)})_{bb}
    \big((M_i^{(1)} A_i^{(1)})_{ab}+\big((G_i^{(1)}-M_i^{(1)}) A_i^{(1)}\big)_{ab}\big) \notag\\
    &+ m_1^{(1)}\big( (G^{(1)}_1-M^{(1)}_1)A^{(1)}_1\big)_{bb}
    \big((M_i^{(1)} A_i^{(1)})_{ab}+\big((G_i^{(1)}-M_i^{(1)}) 
   A_i^{(1)}\big)_{ab}\big) \notag\\
    &+ \big((G^{(1)}_1-M^{(1)}_1)A^{(1)}_1\big)_{bb}(G^{(1)}_1-M^{(1)}_1)_{aa}
    \big((M_i^{(1)} A_i^{(1)})_{ab}\notag\\
    &+\big((G_i^{(1)}-M_i^{(1)}) A_i^{(1)}\big)_{ab}\big)\bigg\}\notag\\
     =& \mathcal{O}_\prec\Bigg(\frac{1}{n^{7/2}}\bigg(\frac{1}{|\beta_1^{(1)}|}\frac{n}{(1+\eta_i)|\beta_i^{(1)}|}+\frac{1}{|\beta_1^{(1)}|}\frac{n^2}{\sqrt{n\eta_i}|\beta_i^{(1)}|}+\frac{1}{|\beta_1^{(1)}||\beta_i^{(1)}|}\frac{n}{\eta_1}\notag\\
    &+\frac{1}{|\beta_1^{(1)}||\beta_i^{(1)}|}\frac{n}{\eta_1}\frac{1}{\sqrt{n\eta_i}}+\frac{1}{|\beta_1^{(1)}||\beta_i^{(1)}|}\frac{n}{\eta_1}+\frac{1}{|\beta_1^{(1)}||\beta_i^{(1)}|}\frac{n}{\eta_1}\frac{1}{\sqrt{n\eta_i}}\notag\\
    &+\frac{1}{|\beta_1^{(1)}||\beta_i^{(1)}|}\frac{n}{\eta_1}\frac{1}{\sqrt{n\eta_1}}+\frac{1}{|\beta_1^{(1)}||\beta_i^{(1)}|}\frac{n}{\eta_1}\frac{1}{\sqrt{n\eta_1}}\frac{1}{\sqrt{n\eta_i}}\bigg)\Bigg)\notag\\
    =&\mathcal{O}_\prec\bigg(\frac{1}{n^{3}\eta_1\eta_i^{1/2}|\beta_1^{(1)}||\beta_i^{(1)}|}\bigg)\notag,
\end{align}
and 
\begin{align}\label{eq:GaaGAbbGAWGab}
     & \frac{1}{n^{7/2}}{\sum_{ab}}'(G_1^{(1)})_{aa}(G_1^{(1)} A_1^{(1)})_{b b}\big(\underline{G_i^{(1)} A_i^{(1)} W^{(1)} G_i^{(1)}}\big)_{a b}\\
      =&  \frac{1}{n^{7/2}}{\sum_{ab}}'\bigg\{m_1^{(1)}\big(M_1^{(1)} A_1^{(1)}\big)_{b b}\big(\underline{G_i^{(1)} A_i^{(1)} W^{(1)} G_i^{(1)}}\big)_{a b}\notag\\
     &+(G^{(1)}_1-M^{(1)}_1)_{aa}\big(M_1^{(1)} A_1^{(1)}\big)_{b b}\big(\underline{G_i^{(1)} A_i^{(1)} W^{(1)} G_i^{(1)}}\big)_{a b}\notag\\
     &+m_1^{(1)}\big(G^{(1)}_1-M^{(1)}_1)A^{(1)}_1\big)_{bb}\big(\underline{G_i^{(1)} A_i^{(1)} W^{(1)} G_i^{(1)}}\big)_{a b}\notag\\
     &+\big(G^{(1)}_1-M^{(1)}_1)A^{(1)}_1\big)_{bb}(G^{(1)}_1-M^{(1)}_1)_{aa}\big(\underline{G_i^{(1)} A_i^{(1)} W^{(1)} G_i^{(1)}}\big)_{a b}\bigg\}\notag\\
     = &\mathcal{O}_\prec \bigg( \frac{1}{n^{7/2}}\frac{1}{|\beta_1^{(1)}|}\frac{n}{n^{1/2}\eta_i^{3/2}|\beta^{(1)}_i|}+\frac{1}{n^{7/2}}\bigg(\frac{1}{\sqrt{n\eta_1}}\frac{1}{|\beta_1^{(1)}|}\frac{n}{n^{1/2}\eta_i^{3/2}|\beta^{(1)}_i|}
     \notag\\
     &+\frac{n^2}{n\eta_1|\beta_1^{(1)}|}\frac{1}{n^{1/2}\eta_i^{3/2}|\beta^{(1)}_i|}+\frac{n^2}{n\eta_1|\beta_1^{(1)}|}\frac{1}{\sqrt{n\eta_1}}\frac{1}{n^{1/2}\eta_i^{3/2}|\beta^{(1)}_i|}\bigg)\bigg)\notag\\
      =&\mathcal{O}_\prec\Bigg(\frac{1}{n^3 \eta_1 \eta_i^{3 / 2}|\beta_1^{(1)}||\beta_i^{(1)}|}\Bigg).\notag
\end{align}
Similarly, by using \eqref{eq:W^r},  \eqref{eq:G^iBW^iG^ibound}, and the estimates from Lemma \ref{Thm: Local_Law}, we obtain the following estimate for the remaining terms of \eqref{eqn: H_2 plus first two terms of H_3};
\begin{align}\label{eq:k=2,r=2 remaing terms}
&\frac{1}{n^{7/2}}\mathbb{E}{\sum_{ab}}'(G_1^{(1)}A_1^{(1)})_{ab}\Big((\underline{G_i^{(1)}A_i^{(1)}W^{(1)}G_i^{(1)}})_{ab}(G_i^{(1)})_{bb}\\
&\quad\quad+\frac{1}{n}(G_i^{(1)}A_i^{(1)}EG_i^{(1)})_{ab}(G_i^{(1)}E^{\dag}G_i^{(1)})_{ab}\Big)+\frac{1}{n^{7/2}}\mathbb{E}{\sum_{ab}}'(G_1^{(1)}A_1^{(1)})_{ab}\notag\\
&\quad\quad\quad\Big((\underline{G_i^{(2)}A_i^{(2)}W^{(2)}G_i^{(2)}})_{ab}(G_i^{(2)})_{bb}+\frac{1}{n}(G_i^{(2)}A_i^{(2)}EG_i^{(2)})_{ab}(G_i^{(2)}E^{\dag}G_i^{(2)})_{ab}\Big)\notag\\
&\quad=\mathcal{O}_{\prec}\left(\frac{1}{n^3 \eta_1^{1 / 2} \eta_i^2|\beta_i^{(1)}||\beta_1^{(1)}|}\right)\notag.
\end{align}

\subsubsection{Contribution of \eqref{eq:RHSofeq:cummulant_exp_I_1^{(3)}_1} to $T_{5}^{(1)}$ when $k=2, r=3$}
This case consists of the remaining terms of $H_{3}$ (see \eqref{eq:H_3}), which were not considered in the previous cases (i.e., $k=2, r\leq 2$). The expression becomes
$$
\begin{aligned}
    &\langle-\Delta^{ba} G_1^{(1)} A_1^{(1)}\rangle \langle\Delta G_i^{(1)} A_i^{(1)}- \underline{W^{(1)} G_i^{(1)} \Delta G_i^{(1)}} A_i^{(1)}\rangle \langle \Delta G_j^{(1)} A_j^{(1)}-\underline{W^{(1)} G_j^{(1)} \Delta G_j^{(1)}} A_j^{(1)}\rangle\\
    &+\langle-\Delta^{ba} G_1^{(1)} A_1^{(1)}\rangle \langle\Delta G_i^{(2)} A_i^{(2)}-\underline{W^{(2)} G_i^{(2)} \Delta G_i^{(2)}} A_i^{(2)}\rangle \langle \Delta G_j^{(2)} A_j^{(2)}-\underline{W^{(2)} G_j^{(2)} \Delta G_j^{(2)}} A_j^{(2)}\rangle\\
    &+\langle-\Delta^{ba} G_1^{(1)} A_1^{(1)}\rangle \langle\Delta G_i^{(1)} A_i^{(1)}-\underline{W^{(1)} G_i^{(1)} \Delta G_i^{(1)}} A_i^{(1)}\rangle \langle \Delta G_j^{(2)} A_j^{(2)}-\underline{W^{(2)} G_j^{(2)} \Delta G_j^{(2)}} A_j^{(2)}\rangle
\end{aligned}
$$

which, using \eqref{eq:G^iBW^iG^ibound}, we estimate by
\begin{align}\label{eq:k=2,r=3}
    & \frac{1}{n^{9 / 2}} {\sum_{ab}}'\Big\{-(G_1^{(1)} A_1^{(1)})_{a b}\big((G_i^{(1)} A_i^{(1)})_{a b}-(\underline{G_i^{(1)} A_i^{(1)} W^{(1)} G_i^{(1)}})_{a b}\big)\\
&\;\;\;\;\;\big((G_j^{(1)} A_j^{(1)})_{a b} -(\underline{G_j^{(1)} A_j^{(1)} W^{(1)} G_j^{(1)}})_{a b}\big)-(G_1^{(1)} A_1^{(1)})_{a b}\big((G_i^{(2)} A_i^{(2)})_{a b}\notag\\
&\;\;\;\;\;-(\underline{G_i^{(2)} A_i^{(2)} W^{(2)} G_i^{(2)}})_{a b}\big)\big((G_j^{(2)} A_j^{(2)})_{a b}-(\underline{G_j^{(2)} A_j^{(2)} W^{(2)} G_j^{(2)}})_{a b}\big)\notag\\
&\;\;\;\;\;-(G_1^{(1)} A_1^{(1)})_{a b}\big((G_i^{(1)} A_i^{(1)})_{a b}-(\underline{G_i^{(1)} A_i^{(1)} W^{(1)} G_i^{(1)}})_{a b}\big)\big((G_j^{(2)} A_j^{(2)})_{a b}\notag\\
&\;\;\;\;\;-(\underline{G_j^{(2)} A_j^{(2)} W^{(2)} G_j^{(2)}})_{a b}\big)\Big\}\notag\\
& \quad=\mathcal{O}_{\prec}\left(\frac{1}{n^4 \eta_1^{1 / 2} \eta_i^{3 / 2} \eta_j^{3 / 2}|\beta_1^{(1)}||\beta_i^{(1)}| |\beta_j^{(2)}|}\right)\notag.
\end{align}

In summary, from the \eqref{eq:k=2,r=1}, \eqref{eq:k=2,r=2_first four term}, \eqref{eq:k=2,r=2 remaing terms}, and \eqref{eq:k=2,r=3}, we have the essential estimates for the terms of \eqref{eq:RHSofeq:cummulant_exp_I_1^{(3)}_1} when $k=2.$ However, none of estimates consider the attached partial product $\prod\Upsilon_{i},$ where some of $\Upsilon_{i}$s are omitted. But this can be bounded by some form of the estimate given in \eqref{eqn: definition of psi error}. Consequently, the updated estimates of \eqref{eq:k=2,r=1}, \eqref{eq:k=2,r=2_first four term}, \eqref{eq:k=2,r=2 remaing terms}, and \eqref{eq:k=2,r=3} will become as
\begin{align*}
    &\mathcal{O}_\prec\Big(\frac{\psi}{n^{3/2}\eta_1}\Big), \mathcal{O}_\prec\Big(\frac{\psi}{n^{2}\eta_i\sqrt{\eta_1}}\Big), \mathcal{O}_\prec\Big(\frac{\psi}{n^2\eta_i^{3/2}}\Big), \text{ and }\mathcal{O}_\prec\Big(\frac{\psi}{n^{5/2}\eta_i\eta_j}\Big)
\end{align*}
respectively. Hence, the final contribution of \eqref{eq:RHSofeq:cummulant_exp_I_1^{(3)}_1} in the case of $k=2$ is $\mathcal{O}_\prec\Big(\frac{\psi}{n^{3/2}\eta_*}\Big).$

\subsubsection{Contribution of \eqref{eq:RHSofeq:cummulant_exp_I_1^{(3)}_1} to $T_{5}^{(1)}$ when $k\geq 3$}
Now, for $k=3$, the RHS term of \eqref{eq:RHSofeq:cummulant_exp_I_1^{(3)}_1} is 
\begin{align}\label{eq:RHSofeq:cummulant_exp_I_1^{(3)}}
    &\sum_{ab}\sum_{\alpha\in\{(ab)_1,(ba)_1,(ab)_2,(ba)_2\}^3}\frac{\kappa((ba)_1,\alpha)}{3!}\mathbb{E}\bigg[\partial_\alpha\Big(\langle-\Delta^{ba}G_1^{(1)}A_1^{(1)}\rangle\prod_{i\neq1}\Upsilon_i\Big)\bigg].
\end{align}
Before considering this case, let us look at the following remark which summarizes the estimate types from previous three subsections.
\begin{rem}\label{rem:leading_contri_discussion}
\begin{enumerate}
    \item If we have the following sum;
$$
\sum_{ab} \prod_{i=1}^l \Delta^{\alpha_i} G_i^{(t)},
$$
where $\alpha_i \in \{ab, ba\}$, then we have the following two cases.
\begin{itemize}
    \item If $l$ is odd, no product term in the sum can consist entirely of diagonal entries; there must be at least one off-diagonal entry in each term.
    \item If $l$ is even, it is possible to have a product term where all the entries are diagonal.
\end{itemize}
\item If we have entries such as $(G_{i}^{(t)})_{ab}$ or $(G_{i}^{(t)}A_{i}^{(t)})_{ab},$ then these are decomposed as $(M_{i}^{(t)})_{ab}+(G_{i}^{(t)}-M_{i}^{(t)})_{ab}$ or $(M_{i}^{(t)}A_{i}^{(t)})_{ab}+((G_{i}^{(t)}-M_{i}^{(t)})A_{i})_{ab}$ respectively. Now, the terms $(M_{i}^{(t)})_{ab}$ or $(M_{i}^{(t)}A_{i}^{(t)})_{ab}$ yield a significant contribution if these are diagonal terms. Therefore if a product term contains a off-diagonal term, it does not give any significant contribution, which is further explained in the following point.
\item If we have off-diagonal factors such as $(G_{i}^{(t)} - M_{i}^{(t)})_{ab}$,  $((G_{i}^{(t)} - M_{i}^{(t)})A_{i}^{(t)})_{ab}$, $(\underline{G_{i}^{(t)}A_{i}^{(t)}W^{(t)}G_{i}^{(t)}})_{ab}$ etc. in the product terms, then using the local law (Theorem \ref{Thm: Local_Law}) or \eqref{eq:G^iBW^iG^ibound}, we argue that such terms do not give any contribution. This was demonstrated in the derivations \eqref{eq:k=2,r=1}, \eqref{eq:k=2,r=2_first four term}, \eqref{eq:k=2,r=2 remaing terms}, and \eqref{eq:k=2,r=3}.
\item In \eqref{eq:RHSofeq:cummulant_exp_I_1^{(3)}_1}, after performing the $\alpha$-derivatives, we obtain a sum of product of terms, where each of the product is a product of $p$-terms. Essentially, the terms in the product can be categorized into two categories (a) the terms in $\prod_{i}\Upsilon_{i}$ which are affected by the derivatives (b) terms in $\prod_{i}\Upsilon_{i}$, which are unaffected by the derivatives. The terms, which are affected by the differentiation, will be a normalized trace of the form $\langle (\Delta G_{i}^{(t)})^{k_{i}} A_{i}\rangle$ or $\langle \underline{W^{(t)}(G_{i}^{(t)}\Delta)^{k_{i}}G_{i}^{(t)}A_{i}^{(t)}}\rangle,$ where $k_{i}\in \mathbb{N}.$ Moreover, the first term $\langle -\Delta^{ba}G_{1}^{(1)}A_{1}^{(1)} \rangle$ is also a normalized trace. Overall if there are $r$ many normalized traces, this will give a pre-factor of $n^{-r-(k+1)/2}$ (including the cumulant estimate \eqref{eqn: cumulant estimate}). This is also mentioned on page \pageref{eq:H_3}. Additionally, $\sum_{i=1}^{r}k_{i}=k+1.$
\item Now, if the product contains all the diagonal entries (similar to \eqref{eq:DeltaG_DeltaG_DeltaG_DeltaGA}), we may get some contribution. In such cases, we still have the pre-factor $n^{-r-(k+1)/2}.$ Giving a naive bound (of constant order) to each of those diagonal entries, the prefactor will get reduced to $n^{-r-(k-3)/2}.$ We loose a factor of $n^{2}$ because the sum $\sum'_{ab}$ contains $2n^{2}$ many terms. From here, it is clear that the leading order contribution comes from the case $k=3.$
\end{enumerate}
\end{rem}
Now, for $k = 3$ proceeding as \eqref{eqn: k=2 derivative written as H sum}, we can identify the terms which contain all diagonal entries. The terms which can possibly contain all diagonal entries are of the following forms; 
$$
\langle  \Delta^{ba} G_1^{(1)} \Delta G_1^{(1)} \Delta G_1^{(1)} \Delta G_1^{(1)} A_1^{(1)}\rangle, \quad\langle  \Delta^{ba} G_1^{(1)} \Delta G_1^{(1)} A_1^{(1)}\rangle \langle  \Delta G_i^{(t)} \Delta G_i^{(t)} A_i^{(t)}\rangle.
$$

For the second types of terms, we notice that $\alpha=\big((ab)_1, (ab)_1, (ba)_1\big)$ yields a terms having product of diagonal entries, which can be evaluated as follows;
\begin{align*}
& \kappa\left((ba)_1,(ab)_1, (ab)_1, (ba)_1\right) {\sum_{ab}}'\langle\Delta^{ba} G_1^{(1)} \Delta^{a b} G_1^{(1)} A_1^{(1)}\rangle\langle\Delta^{ab} G_i^{(1)} \Delta^{ba} G_i^{(1)} A_i^{(1)}\rangle\\
=& \frac{(\kappa_4)_{11}}{4n^4} {\sum_{ab}}'(G_1^{(1)})_{aa}(G_1^{(1)} A_1^{(1)})_{bb}(G_i^{(1)})_{bb} (G_i^{(1)} A_i^{(1)})_{aa}\\
=& \frac{(\kappa_4)_{11}}{4n^4} {\sum_{ab}}'m_1^{(1)} m_i^{(1)}(M_1^{(1)} A_1^{(1)})_{bb} (M_i^{(1)} A_i^{(1)})_{aa}+\mathcal{O}_{\prec}\Big(\frac{1}{n^{5/2}\eta_*^{1/2}|\beta_{1i}^{*}|^2}\Big)\\
=& \frac{(\kappa_4)_{11}}{2n^2} \langle M_1^{(1)} \rangle \langle M_i^{(1)} \rangle \langle M_1^{(1)} A_1^{(1)} \rangle \langle M_i^{(1)} A_i^{(1)} \rangle+\mathcal{O}_{\prec}\Big(\frac{1}{n^{5/2}\eta_*^{1/2}|\beta_{1i}^{*}|^2}\Big).
\end{align*}
The other choices of $\alpha$ which give a significant contribution are $\alpha=\big((ab)_1, (ba)_1, (ab)_1\big)$, $ \big((ab)_1, (ab)_2, (ba)_2\big), \big((ab)_1, (ba)_2, (ab)_2\big).$ Combining all the contributions, we obtain
\begin{align*}
    \sum_{a b} & \sum_\alpha \kappa((ba)_1, \alpha)\langle\Delta^{ba} G_1^{(1)} \Delta^{\alpha_1} G_1^{(1)}\rangle\langle\Delta^{\alpha_2} G_i^{(t)} \Delta^{\alpha_3} G_i^{(t)} A_i^{(t)}\rangle \\
    &=\frac{(\kappa_4)_{11}}{n^2} \langle M_1^{(1)} \rangle \langle M_i^{(1)} \rangle \langle M_1^{(1)} A_1^{(1)} \rangle \langle M_i^{(1)} A_i^{(1)} \rangle\\
    &\quad+ \frac{(\kappa_4)_{12}}{n^2} \langle M_1^{(1)} \rangle \langle M_i^{(2)} \rangle \langle M_1^{(1)} A_1^{(1)} \rangle \langle M_i^{(2)} A_i^{(2)} \rangle\\
   &\quad+\mathcal{O}_{\prec}\Big(\frac{1}{n^{5/2}\eta_*^{1/2}|\beta_{1i}^{*}|^2}\Big)+\mathcal{O}_{\prec}\Big(\frac{1}{n^3\eta_*|\beta_{1i}^{*}|^2}\Big)\\
   &=:\frac{(\kappa_4)_{11}}{2n^2} U_1^{(1)}U_i^{(1)}+ \frac{(\kappa_4)_{12}}{2n^2} U_1^{(1)}U_i^{(2)}
   +\mathcal{O}_{\prec}\Big(\frac{1}{n^{5/2}\eta_*^{1/2}|\beta_{1i}^{*}|^2}\Big),
\end{align*}
where, 
\begin{equation}\label{eq:U_i^s}
     U_i^{(t)}:=-\sqrt{2}\langle M_i^{(t)} \rangle \langle M_i^{(t)} A_i^{(t)} \rangle=\frac{\iota}{\sqrt{2}} \partial_{\eta_i} (m_i^{(t)})^2.
\end{equation}

Similarly, for the first one, only $\alpha = \big((ab)_1 (ba)_1, (ab)_1\big)$ gives significant contribution as follows;
\begin{align}\label{eq:kequalto3contri}
    & \sum_{a b} \sum_\alpha \kappa((ba)_1, \alpha)\langle \Delta^{ba} G_1^{(1)} \Delta^{ab} G_1^{(1)} \Delta^{ba} G_1^{(1)} \Delta^{ab} G_1^{(1)} A_1^{(1)}\rangle\notag \\
& \quad=\frac{(\kappa_4)_{11}}{n}\langle M_1^{(1)}\rangle^3\langle M_1^{(1)}A_1^{(1)}\rangle+\mathcal{O}_{\prec}\Big(\frac{1}{n^{3/2}\eta_1^{1/2}|\beta_1^{(1)}|}\Big)+\mathcal{O}_\prec\Big(\frac{1}{n^2\eta_1|\beta_1^{(1)}|}\Big)\notag\\
& \quad=-\frac{\iota(\kappa_4)_{11}}{4n}\partial_{\eta_1}(m_1^{(1)})^4+\mathcal{O}_\prec\Big(\frac{1}{n^2\eta_1|\beta_1^{(1)}|}\Big).
\end{align}
The above calculation is same as \eqref{eq:DeltaG_DeltaG_DeltaG_DeltaGA}.
Thus, we can conclude for the $k \geq 3$ terms in \eqref{eq:cummulant_exp_I_1^{(3)}} that

\begin{align}\label{eq:T_5^{(1)} for kgeq 3}
    &\sum_{ab}\sum_{k\geq 3}\sum_{\alpha\in\{(ab)_1,(ba)_1,(ab)_2,(ba)_2\}^k}\frac{\kappa((ba)_1,\alpha)}{k!}\mathbb{E}\bigg[\partial_\alpha\Big(\langle-\Delta^{ba}G_1^{(1)}A_1^{(1)}\rangle\prod_{i\neq1}\Upsilon_i\Big)\bigg]\\
    =& -\frac{\iota(\kappa_4)_{11}}{4n}\partial_{\eta_1}(m_1^{(1)})^4\mathbb{E}\Big[\prod_{i\neq1}\Upsilon_i\Big]
    + \sum_{i\neq1}\frac{1}{2n^2}\big (c(\kappa_4)_{11}U_1^{(1)}U_i^{(1)}+d(\kappa_4)_{12}U_1^{(1)}U_i^{(2)}\big)\notag\\
    &\;\;\mathbb{E}\bigg[\prod_{j\neq i,1}\Upsilon_j\bigg]
   +\mathcal{O}_\prec\Big(\frac{1}{n^2\eta_1|\beta_1^{(1)}|}\Big)\mathbb{E}\Big[\prod_{i\neq1}\Upsilon_i\Big]+\mathcal{O}_{\prec}\bigg(\frac{1}{n^{5/2}\eta_*^{1/2}|\beta_{1i}^{*}|^2} \bigg)\mathbb{E}\bigg[\prod_{j\neq i,1}\Upsilon_j\bigg]\notag\\
   =& -\frac{\iota(\kappa_4)_{11}}{4n}\partial_{\eta_1}(m_1^{(1)})^4\mathbb{E}\Big[\prod_{i\neq1}\Upsilon_i\Big]
    + \sum_{i\neq1}\frac{1}{2n^2}\big (c(\kappa_4)_{11}U_1^{(1)}U_i^{(1)}+d(\kappa_4)_{12}U_1^{(1)}U_i^{(2)}\big)\notag\\
    &\;\;\mathbb{E}\bigg[\prod_{j\neq i,1}\Upsilon_j\bigg]
   +\mathcal{O}_{\prec}\bigg(\frac{\psi}{n^{3/2}\eta_1^{1/2}}+\frac{\psi (\eta_*)^{1/2}}{n^2}\bigg)\notag.
\end{align}
Thus, from page \pageref{eq:k=2,r=3} and \eqref{eq:T_5^{(1)} for kgeq 3} we have 
\begin{align}\label{T^{(5)}_1_final}
    &T_5^{(1)}\\
    =&-\frac{\iota(\kappa_4)_{11}}{4n}\partial_{\eta_1}(m_1^{(1)})^4\mathbb{E}\Big[\prod_{i\neq1}\Upsilon_i\Big]
    + \sum_{i\neq1}\frac{1}{2n^2}\big(c(\kappa_4)_{11}U_1^{(1)}U_i^{(1)}\notag\\
    &+d(\kappa_4)_{12}U_1^{(1)}U_i^{(2)}\big)\mathbb{E}\bigg[\prod_{j\neq i,1}\Upsilon_j\bigg]+\mathcal{O}_{\prec}\bigg(\frac{\psi}{n^{3/2}\eta_*^{1/2}}\bigg)\notag.
\end{align}
Similarly, we can evaluate $T_5^{(2)}$ and we have the final expression for $T_5^{(1)}+T_5^{(2)}$ as follows;
\begin{align}\label{eq:T5p{(1)}+T5p{(2)}_final}
    &T_5^{(1)}+T_5^{(2)}\\
    =&-\frac{\iota(\kappa_4)_{11}}{4n}\partial_{\eta_1}(m_1^{(1)})^4\mathbb{E}\Big[\prod_{i\neq1}\Upsilon_i\Big]
    + \sum_{i\neq1}\frac{1}{2n^2}\big (c(\kappa_4)_{11}U_1^{(1)}U_i^{(1)}\notag\\
    &+d(\kappa_4)_{12}U_1^{(1)}U_i^{(2)}\big)\mathbb{E}\Big[\prod_{j\neq i,1}\Upsilon_j\Big]-\frac{\iota(\kappa_4)_{22}}{4n}\partial_{\eta_1}(m_1^{(2)})^4\mathbb{E}\Big[\prod_{i\neq1}\Upsilon_i\Big]
    \notag\\
    &+ \sum_{i\neq1}\frac{1}{2n^2}\big( c(\kappa_4)_{22}U_1^{(2)}U_i^{(2)}+d(\kappa_4)_{21}U_1^{(2)}U_i^{(1)}\big)\mathbb{E}\bigg[\prod_{j\neq i,1}\Upsilon_j\bigg]+\mathcal{O}_{\prec}\bigg(\frac{\psi}{n^{3/2}\eta_*^{1/2}}\bigg)\notag\\
    =&-\frac{\iota(\kappa_4)_{11}}{4n}\partial_{\eta_1}(m_1^{(1)})^4\mathbb{E}\Big[\prod_{i\neq1}\Upsilon_i\Big]
    + \sum_{i\neq1}\frac{1}{2n^2}\big (c(\kappa_4)_{11}U_1^{(1)}U_i^{(1)}\notag\\
    &+d(\kappa_4)_{12}U_1^{(1)}U_i^{(2)}\big)\mathbb{E}\Big[\prod_{j\neq i,1}\Upsilon_j\Big]-\frac{\iota(\kappa_4)_{22}}{4n}\partial_{\eta_1}(m_1^{(2)})^4\mathbb{E}\Big[\prod_{i\neq1}\Upsilon_i\Big]
    \notag\\
    &+ \sum_{i\neq1}\frac{1}{2n^2}\big( c(\kappa_4)_{22}U_1^{(2)}U_i^{(2)}+d(\kappa_4)_{21}U_1^{(2)}U_i^{(1)}\big)\mathbb{E}\bigg[\prod_{j\neq i,1}\Upsilon_j\bigg]+error_5.\notag
\end{align}

\subsection{Concluding the proof of the Theorem \ref{Thm:CLT for resolvents}}

 From \eqref{eq:I_1+I_2_final} and \eqref{eq:T5p{(1)}+T5p{(2)}_final} we have
\begin{align*}
    &\mathbb{E}\Big[\prod_{i\in[p]}\Upsilon_i\Big]\\
    =& c\bigg\{-\frac{\iota(\kappa_4)_{11}}{4n}\partial_{\eta_1}(m_1^{(1)})^4\mathbb{E}\Big[\prod_{i\neq1}\Upsilon_i\Big]
    + \sum_{i\neq1}\frac{1}{2n^2}\big (c(\kappa_4)_{11}U_1^{(1)}U_i^{(1)}\notag\\
    &+d(\kappa_4)_{12}U_1^{(1)}U_i^{(2)}\big)\mathbb{E}\Big[\prod_{j\neq i,1}\Upsilon_j\Big]\bigg\}+d\bigg\{-\frac{\iota(\kappa_4)_{22}}{4n}\partial_{\eta_1}(m_1^{(2)})^4\mathbb{E}\Big[\prod_{i\neq1}\Upsilon_i\Big]\notag\\
    &+ \sum_{i\neq1}\frac{1}{2n^2}\big( c(\kappa_4)_{22}U_1^{(2)}U_i^{(2)}+d(\kappa_4)_{21}U_1^{(2)}U_i^{(1)}\big)\mathbb{E}\bigg[\prod_{j\neq i,1}\Upsilon_j\bigg]\bigg\}\notag\\
        &+c\frac{\iota (\kappa_4)_{11}}{4n} \partial_{\eta_1}(m_1^{(1)})^4
         \mathbb{E}\Big[\prod_{i\neq1}\Upsilon_{i}\Big]
        +d\frac{\iota (\kappa_4)_{22}}{4n} \partial_{\eta_1}(m_1^{(2)})^4
        \mathbb{E}\Big[\prod_{i\neq1}\Upsilon_{i}\Big]\\
        &+ \sum_{i\neq 1}\frac{c^2\widehat{V_{1i}^{(1)}}}{2n^2}\mathbb{E}\Big[\prod_{j\neq1,i}\Upsilon_{j}\Big] +\sum_{i\neq 1}\frac{d^2\widehat{V_{1i}^{(2)}}}{2n^2}\mathbb{E}\Big[\prod_{j\neq1,i}\Upsilon_{j}\Big]\notag\\
       &+\sum_{i\neq 1}\Bigg\{\frac{8cd\gamma m_i^{(2)}m_1^{(1)} u_1^{(1)}u_i^{(2)}\Re{(\overline{z_1}z_i)}}{2n^2\beta_i^{(2)}\beta_1^{(1)}}+\frac{8cd\gamma m_i^{(1)}m_1^{(2)} u_i^{(1)}u_1^{(2)}\Re{(\overline{z_1}z_i)}}{2n^2\beta_i^{(1)}\beta_1^{(2)}}\Bigg\}\notag\\
        &\quad\mathbb{E}\Big[\prod_{j\neq1,i}\Upsilon_{j}\Big]+ \frac{cd\rho}{n^2}\sum_{i\neq1}\Big( -\partial_{\eta_1}(m_1^{(1)}) \partial_{\eta_i}(m_i^{(2)})
         - \partial_{\eta_1}(m_1^{(2)}) \partial_{\eta_i}(m_i^{(1)}) \Big)\notag\\
        &\quad\mathbb{E}\Big[\prod_{j\neq1,i}\Upsilon_{j}\Big]
         +error_1+error_2+error_3+error_4+error_5\notag\\
    =& \sum_{i\neq1}\frac{1}{2n^2}\big(c^2(\kappa_4)_{11}U_1^{(1)}U_i^{(1)}+d^2(\kappa_4)_{22}U_1^{(2)}U_i^{(2)}\big)\mathbb{E}\Big[\prod_{j\neq i,1}\Upsilon_j\Big]\notag\\
     &+ \sum_{i\neq1}\frac{cd(\kappa_4)_{12}}{2n^2}\big(U_1^{(2)}U_i^{(1)}+U_1^{(1)}U_i^{(2)}\big)\mathbb{E}\Big[\prod_{j\neq i,1}\Upsilon_j\Big]+ \sum_{i\neq 1}\frac{c^2\widehat{V_{1i}^{(1)}}}{2n^2}\mathbb{E}\Big[\prod_{j\neq1,i}\Upsilon_{j}\Big]\notag\\
     &+\sum_{i\neq 1}\frac{d^2\widehat{V_{1i}^{(2)}}}{2n^2}\mathbb{E}\Big[\prod_{j\neq1,i}\Upsilon_{j}\Big]+\sum_{i\neq 1}\Bigg\{\frac{8cd\gamma m_i^{(2)}m_1^{(1)} u_1^{(1)}u_i^{(2)}\Re{(\overline{z_1}z_i)}}{2n^2\beta_i^{(2)}\beta_1^{(1)}}\notag\\
        &+\frac{8cd\gamma m_i^{(1)}m_1^{(2)} u_i^{(1)}u_1^{(2)}\Re{(\overline{z_1}z_i)}}{2n^2\beta_i^{(1)}\beta_1^{(2)}}\Bigg\}\mathbb{E}\Big[\prod_{j\neq1,i}\Upsilon_{j}\Big]\notag\\
        &+ \frac{cd\rho}{n^2}\sum_{i\neq1}\Big( -\partial_{\eta_1}(m_1^{(1)}) \partial_{\eta_i}(m_i^{(2)})
        - \partial_{\eta_1}(m_1^{(2)}) \partial_{\eta_i}(m_i^{(1)}) \Big)\mathbb{E}\Big[\prod_{j\neq1,i}\Upsilon_{j}\Big]\notag\\
        &
         +error_1+error_2+error_3+error_4+error_5\notag\\
          =&:\sum_{i\neq 1}\frac{c^2\widehat{V_{1i}^{(1)}}+d^2\widehat{V_{1i}^{(2)}} +8cd\gamma (L_{i1}^{12}+L_{i1}^{21})+2cd\rho  N_{1i} }{2n^2}\mathbb{E}\Big[\prod_{j\neq1,i}\Upsilon_{j}\Big]\notag\\
           &+\sum_{i\neq 1}\frac{c^2(\kappa_4)_{11}U_1^{(1)}U_i^{(1)}+d^2(\kappa_4)_{22}U_1^{(2)}U_i^{(2)}+cd(\kappa_4)_{12} \big(U_1^{(1)}U_i^{(2)}+U_i^{(1)}U_1^{(2)}\big) }{2n^2}\mathbb{E}\Big[\prod_{j\neq1,i}\Upsilon_{j}\Big]\notag\\
         &+error_1+error_2+error_3+error_4+error_5\notag\\
          &=:\frac{1}{2n^2}\sum_{i\neq 1}\Bigg\{\bigg(c^2\widehat{V_{1i}^{(1)}}+d^2\widehat{V_{1i}^{(2)}} +8cd\gamma (L_{i1}^{12}+L_{i1}^{21})+2cd\rho  N_{1i}+ c^2(\kappa_4)_{11}U_1^{(1)}U_i^{(1)}\notag\\
          &+d^2(\kappa_4)_{22}U_1^{(2)}U_i^{(2)}+cd(\kappa_4)_{12} \big(U_1^{(1)}U_i^{(2)}+U_i^{(1)}U_1^{(2)}\big)\bigg)\mathbb{E}\Big[\prod_{j\neq1,i}\Upsilon_{j}\Big]\Bigg\}\notag\\
         &+\mathcal{O}_\prec \bigg(\psi\Big(\frac{1}{(n\eta_*)^2}+\frac{1}{n^2 \eta_*^{1i}|z_1-z_i|^4} + \frac{1}{(n\eta_*^{1i})^3 |z_1-z_i|^4}+\frac{1}{\Im{z_1}(n\eta_1)^{3/2}}\Big)\bigg),\notag
\end{align*}
where
\begin{align}\label{eq:L_{i1}^{12}}
    &L_{i1}^{12} = \frac{ m_i^{(1)}m_1^{(2)} u_i^{(1)}u_1^{(2)}\Re{(\overline{z_1}z_i)}}{\beta_i^{(1)}\beta_1^{(2)}},
    & L_{i1}^{21} = \frac{m_i^{(2)}m_1^{(1)} u_i^{(2)}u_1^{(1)}\Re{(\overline{z_1}z_i)}}{\beta_i^{(2)}\beta_1^{(1)}},
\end{align}

and

\begin{align}\label{eq:N_{1i}}
    & N_{1i}=\partial_{\eta_1}(m_1^{(1)}) \partial_{\eta_i}(m_i^{(2)})
         + \partial_{\eta_1}(m_1^{(2)}) \partial_{\eta_i}(m_i^{(1)}).
\end{align}
Thus, by induction we have
\begin{align}
    &\mathbb{E}\bigg[\prod_{i\in[p]}\Upsilon_i\bigg]\\
    &= \frac{1}{2n^{p}}\sum_{P\in \prod_p} \prod_{\{i,j\}\in P}\Big\{c^2\widehat{V_{ij}^{(1)}}+d^2\widehat{V_{ij}^{(2)}} +8cd\gamma (L_{ji}^{12}+L_{ji}^{21})+2cd\rho  N_{ij}\notag\\
     &\;\;\;\;\;+c^2(\kappa_4)_{11}U_i^{(1)}U_j^{(1)}+d^2(\kappa_4)_{22}U_i^{(2)}U_j^{(2)}+cd(\kappa_4)_{12} \big(U_i^{(1)}U_j^{(2)}+U_j^{(1)}U_i^{(2)}\big)\Big\}\\
     &\;\;\;\;\;+\mathcal{O}_\prec \bigg(\psi\Big(\frac{1}{(n\eta_*)^2}+\frac{1}{n^2 \eta_*|z_i-z_j|^4} + \frac{1}{(n\eta_*)^3 |z_i-z_j|^4}+\frac{1}{\Im{z_i}(n\eta_*)^{3/2}} \Big)\bigg),\notag
     \end{align}
     where $\eta_*= \displaystyle \min_i{\eta_i}.$

Now, we will estimate the bound \eqref{eq:error_bound} in Lemma \ref{lemma:overlinebound}. However, before proceeding, we will state the following lemma;
\begin{lemma}\cite[Lemma 5.8]{cipolloni2023central}\label{lemma:estimate_of_producr_of_G_ii}
    Let $\omega_i$ and $z_i$ be arbitrary spectral parameters with $\eta_i = \Im(\omega_i) > 0$. Define  $G_j^{(t)} = (G^{(t)})^{z_j}(\omega_j),$
and let $G^{(t)}_{j_1, \dots, j_k}$ denote products of resolvents (or their adjoints/transposes) interspersed with bounded deterministic matrices, e.g.,  
$G^{(t)}_{1i1} = A_1^{(t)} G_1^{(t)} A_2^{(t)} G_i^{(t)} A_3^{(t)} G_1^{(t)} A_4^{(t)}.$
Then for $j_{1}, j_{2}, \ldots, j_{k}$ and any $1\leq s<t\leq k,$ we obtain the following bounds; 
    \begin{enumerate}
        \item Isotropic Bound
        $$
          |\langle \boldsymbol{x}, G^{(t)}_{j_1, \dots, j_k} \boldsymbol{y} \rangle| \prec \|\boldsymbol{x}\| \|\boldsymbol{y}\| \sqrt{\eta_{j_1} \eta_{j_k}} \Big(\prod_{i=1}^{k} \eta_{j_i}^{-1}\Big),
        $$
        \item Averaged Bound 
        $$
        |\langle G^{(t)}_{j_1,\dots, j_k} \rangle| \prec \sqrt{\eta_{j_s}\eta_{j_t}}\Big(\prod_{i=1}^{k} \eta_{j_i} \Big)^{-1}.
        $$
    \end{enumerate}
\end{lemma}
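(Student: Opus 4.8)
The plan is to follow the argument of \cite[Lemma 5.8]{cipolloni2023central} essentially verbatim, since it relies only on the single-resolvent local law (Theorem \ref{Thm: Local_Law}) and the resolvent symmetries, both of which are available here. First I would reduce to chains of \emph{plain} resolvents: because $\big((G^{(t)})^{z}(\omega)\big)^{*}=(G^{(t)})^{z}(\bar\omega)$ and $\big((G^{(t)})^{z}(\omega)\big)^{t}=(G^{(t)})^{\bar z}(\omega)$, replacing every adjoint/transpose factor by the corresponding plain resolvent changes neither the norms of the interspersed deterministic matrices nor the quantities $\eta_{j_i}=|\Im\omega_{j_i}|$, so it suffices to estimate $\Xi=A_0 G_{j_1}A_1 G_{j_2}\cdots A_{k-1}G_{j_k}A_k$ with all $G$'s plain (and the end matrices $A_0,A_k$ harmless). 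The two workhorses are: (i) the Ward-type identity $(G_i^{(t)})^{*}G_i^{(t)}=\eta_i^{-1}\Im G_i^{(t)}$ coming from the resolvent identity, together with the trivial bound $\|\Im G_i^{(t)}\|\le\eta_i^{-1}$; and (ii) the consequences of Theorem \ref{Thm: Local_Law} combined with $\|(M^{(t)})^{z}(\omega)\|\lesssim 1$ from \eqref{eq:3.3}, namely $\langle\boldsymbol{x},\Im G_i^{(t)}\boldsymbol{x}\rangle\prec\|\boldsymbol{x}\|^{2}$ for deterministic $\boldsymbol{x}$ and $\langle\Im G_i^{(t)}\rangle\prec 1$.

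For the isotropic bound I would peel the two outermost resolvents against the deterministic vectors. Writing $\langle\boldsymbol{x},G_{j_1}A_1\cdots A_{k-1}G_{j_k}\boldsymbol{y}\rangle=\langle(G_{j_1})^{*}\boldsymbol{x},A_1 G_{j_2}\cdots G_{j_k}\boldsymbol{y}\rangle$, Cauchy--Schwarz gives $\le\|(G_{j_1})^{*}\boldsymbol{x}\|\,\|A_1\|\cdots\|A_{k-1}\|\,\|G_{j_k}\boldsymbol{y}\|$, where by (i)--(ii) one has $\|(G_{j_1})^{*}\boldsymbol{x}\|^{2}=\eta_{j_1}^{-1}\langle\boldsymbol{x},\Im G_{j_1}\boldsymbol{x}\rangle\prec\eta_{j_1}^{-1}\|\boldsymbol{x}\|^{2}$ and likewise $\|G_{j_k}\boldsymbol{y}\|^{2}\prec\eta_{j_k}^{-1}\|\boldsymbol{y}\|^{2}$, while every interior factor is controlled trivially by $\|G_{j_i}\|\le\eta_{j_i}^{-1}$. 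Multiplying yields exactly $\|\boldsymbol{x}\|\,\|\boldsymbol{y}\|\,\sqrt{\eta_{j_1}\eta_{j_k}}\,\prod_{i=1}^{k}\eta_{j_i}^{-1}$.

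For the averaged bound I would use cyclicity of the trace to split the chain, for a prescribed pair $s<t$, into two arcs $\Xi_1=G_{j_s}A_s G_{j_{s+1}}\cdots A_{t-1}$ and $\Xi_2=G_{j_t}A_t\cdots A_{s-1}$ whose cyclic concatenation is the whole chain, so that $\langle G^{(t)}_{j_1,\dots,j_k}\rangle=\frac{1}{n}\Tr(\Xi_1\Xi_2)=\frac{1}{n}\sum_{a}\langle(\Xi_1)^{*}\boldsymbol{e}_a,\Xi_2\boldsymbol{e}_a\rangle$, hence $\le\frac{1}{n}\|\Xi_1\|_{\mathrm{HS}}\|\Xi_2\|_{\mathrm{HS}}$ by Cauchy--Schwarz. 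Each Hilbert--Schmidt norm I would bound by iteratively peeling resolvents from the inside of $\Tr(\Xi_1\Xi_1^{*})$: each interior peel $G_{j_l}(G_{j_l})^{*}=\eta_{j_l}^{-1}\Im G_{j_l}$ followed by $\|\Im G_{j_l}\|\le\eta_{j_l}^{-1}$ costs a factor $\eta_{j_l}^{-2}$, until one is left with $\Tr\!\big(G_{j_s}(\cdots)(G_{j_s})^{*}\big)\le C\,\Tr\!\big(G_{j_s}(G_{j_s})^{*}\big)=C\,n\,\eta_{j_s}^{-1}\langle\Im G_{j_s}\rangle\prec n\,\eta_{j_s}^{-1}$; this gives $\|\Xi_1\|_{\mathrm{HS}}^{2}\prec n\,\eta_{j_s}^{-1}\prod_{l=s+1}^{t-1}\eta_{j_l}^{-2}$ and, cyclically, the analogous estimate for $\|\Xi_2\|_{\mathrm{HS}}^{2}$ with the $\sqrt{\eta}$ improvement now at $j_t$. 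Multiplying and dividing by $n$ produces $\sqrt{\eta_{j_s}\eta_{j_t}}\,\prod_{i=1}^{k}\eta_{j_i}^{-1}$, and the degenerate cases (an arc of length one, or repeated $j_i$, which simply occur with multiplicity throughout) are handled the same way.

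The only genuinely delicate point, beyond bookkeeping, is precisely this cyclic rotation plus single cut in the averaged case: it must be arranged so that the two resulting arcs are \emph{headed} by $G_{j_s}$ and $G_{j_t}$ respectively, which is what makes the estimate valid for an arbitrary pair $1\le s<t\le k$ rather than only for the naive endpoints $j_1,j_k$. One also has to verify that the error terms produced by Theorem \ref{Thm: Local_Law} in (ii) stay subleading; this is immediate in the relevant regime $\eta_i\gtrsim n^{-1+\epsilon}$ (equivalently it is absorbed into the $n^{\xi}$ slack implicit in $\prec$), and nothing breaks when consecutive resolvents share the same spectral parameter since the peeling argument never needs to merge them.
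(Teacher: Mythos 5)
The paper does not provide its own proof of this lemma --- it is cited directly from \cite[Lemma~5.8]{cipolloni2023central} without a proof. Your proposal correctly reproduces the standard argument from that reference: reduce to plain-resolvent chains via the symmetries $(G^{z}(\omega))^{*}=G^{z}(\bar\omega)$ and $(G^{z}(\omega))^{t}=G^{\bar z}(\omega)$, use the Ward identity $G^{*}G=\eta^{-1}\Im G$ together with the isotropic and averaged single-resolvent local laws (Theorem~\ref{Thm: Local_Law} plus $\|M\|\lesssim1$), peel the two boundary resolvents by Cauchy--Schwarz for the isotropic bound, and in the averaged case rotate the trace so that the two chosen indices head the two arcs, then apply Cauchy--Schwarz in Hilbert--Schmidt norm with inside-out peeling. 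The inside-out peeling, the choice to cut the cycle immediately before $G_{j_s}$ and $G_{j_t}$, and the final conversion $\Tr(G_{j_s}G_{j_s}^{*})=n\eta_{j_s}^{-1}\langle\Im G_{j_s}\rangle\prec n\eta_{j_s}^{-1}$ are exactly the mechanisms used in the cited reference. Two minor notational/technical remarks, neither of which is a gap: the displayed Cauchy--Schwarz inequality in the isotropic case omits the interior $\|G_{j_i}\|$ factors, but you immediately supply them in the following sentence; and the bounds $\langle\boldsymbol{x},\Im G\boldsymbol{x}\rangle\prec\|\boldsymbol{x}\|^{2}$ and $\langle\Im G\rangle\prec1$ carry a factor $1+(n\eta)^{-1}$ for very small $\eta$, which either restricts the claim to the regime where these lemmas are actually used ($\eta\gtrsim n^{-1}$, as you say) or is handled by the monotonicity of $\eta\mapsto\eta\,\Im G(\iota\eta)$, exactly as in the original.
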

\begin{lemma}\label{lemma:overlinebound} Let $G_{i}^{(t)},$ $G^{(t)}_{j_1, \dots, j_k}$ be same as defined in Lemma \ref{lemma:estimate_of_producr_of_G_ii}, and $A_{i}^{(t)}$ be deterministic matrices with $\|A_{i}^{(t)}\|\leq |\beta_{i}^{(t)}|^{-1}$. Then we have the following estimate;
    \begin{align*}
        &\mathbb{E}\big[|\langle \underline{G^{(t)}_1A^{(t)}_1EG_i^{(s)}A_i^{(s)}W^{(s)}G_i^{(s)}E^{\dag}} \rangle|^2\big]\\
        &= \mathcal{O}_{\prec}\Big(\frac{1}{n^2 \eta_i^4|\beta_1^{(t)}|^{2}|\beta_i^{(s)}|^{2}}\Big),\text{  for } 1\leq t\neq s\leq 2.
    \end{align*}
\begin{proof}
Consider
    \begin{align*}
         &\mathbb{E}\big[|\langle \underline{G^{(1)}_1A^{(1)}_1EG_i^{(2)}A_i^{(2)}W^{(2)}G_i^{(2)}E^{\dag}} \rangle|^2\big]\\
         &=\mathbb{E}\big[|\langle \underline{W^{(2)}G_i^{(2)}E^{\dag}G^{(1)}_1A^{(1)}_1EG_i^{(2)}A_i^{(2)}} \rangle|^2\big]\\
         &=\mathbb{E}\big[|\langle \underline{W^{(2)}G_{ii}^{(2)}}\rangle|^2\big].
    \end{align*}
Now, we compute $\mathbb{E}\big[|\langle \underline{W^{(2)}G_{ii}^{(2)}}\rangle|^2\big]$ by using a cumulant expansion \ref{Result:Cumulant Expansion}. The following cumulant expansion is similar to \eqref{eq:I^{(1)}}, and the calculations thereafter.
\begin{align}\label{eq:cumulant}
& \mathbb{E}\big[|\langle \underline{W^{(2)}G_{ii}^{(2)}}\rangle|^2\big] \notag\\
 =& \frac{1}{n}{\sum_{ab}}' \mathbb{E}\big[\langle \Delta^{ba}G^{(2)}_{ii}\rangle \partial_{ab}\big(\langle \underline{W^{(2)}G^{(2)}_{ii}} \rangle\big)\big]+\frac{\gamma_1}{n}{\sum_{ab}}' \mathbb{E}\big[\partial_{ba}\big(\langle \Delta^{ba}G^{(2)}_{ii}\rangle\big) \langle \underline{W^{(2)}G^{(2)}_{ii}} \rangle\big]\notag\\
&+\frac{\gamma_1}{n}{\sum_{ab}}' \mathbb{E}\big[\langle \Delta^{ba}G^{(2)}_{ii}\rangle \partial_{ba}\big(\langle \underline{W^{(2)}G^{(2)}_{ii}} \rangle\big)\big]\notag\\
&+ {\sum_{ab}}'\sum_{k\geq 2}\sum_{\alpha\in\{(ab)_2,(ba)_2\}^k} \frac{\kappa\big((ba)_2,\alpha\big)}{k!}\mathbb{E}\big[\partial_{\alpha}\big(\langle \Delta^{ba}G^{(2)}_{ii} \rangle \langle \underline{W^{(2)}G^{(2)}_{ii}} \rangle\big)\big]\notag\\
=&\mathbb{E} \Big[\widetilde{\mathbb{E}}\big[\langle\widetilde{W}^{(2)} G_{ii}^{(2)}\rangle\big(\langle\widetilde{W}^{(2)} G_{ii}^{(2)}\rangle +\langle\underline{W^{(2)}G_i^{(2)}\widetilde{W}^{(2)}G_{ii}^{(2)}} \rangle+\langle\underline{W^{(2)}G_{ii}^{(2)}\widetilde{W}^{(2)}G_{i}^2} \rangle \big)\big]\Big]\\
&+\frac{\gamma_1}{n}{\sum_{ab}}' \mathbb{E}\big[\langle \Delta^{ba}\partial_{ba}G^{(2)}_{ii}\rangle \langle \underline{W^{(2)}G^{(2)}_{ii}} \rangle\big]+ \frac{\gamma_1}{n}{\sum_{ab}}' \mathbb{E}\big[\langle \Delta^{ba}G^{(2)}_{ii}\rangle \partial_{ba}\big(\langle \underline{W^{(2)}G^{(2)}_{ii}} \rangle\big)\big] \notag\\
& +\sum_{k \geq 2} {\sum_{a b}}^\prime \sum_{k_1+k_2=k-1} \sum_{\alpha_1, \alpha_2}\frac{\kappa((ba)_2, \alpha_1,\alpha_2)}{k!} \mathbb{E}\big[\langle\Delta^{ba} \partial_{\alpha_1} G_{ii}^{(2)}\rangle \langle\Delta^{ba} \partial_{\alpha_2} G_{ii}^{(2)}\rangle\big] \notag\\
& +\sum_{k \geq 2}{\sum_{a b}}^\prime \sum_{k_1+k_2=k} \sum_{\alpha_1, \alpha_2}\frac{\kappa((ba)_2, \alpha_1,\alpha_2)}{k!} \mathbb{E}\big[\langle\Delta^{ba} \partial_{\alpha_1} G_{ii}^{(2)}\rangle\langle \underline{W^{(2)} \partial_{\alpha_2} G_{ii}^{(2)}}\rangle\big], \notag
\end{align}
where $\alpha_i \in\{(ba)_2, (ba)_2\}^{k_i}$. 

First, we bound the terms that do not contain self-renormalization terms, as we do not need to expand them further. So, first consider the term
\begin{align}\label{eq:cumulant..}
    \widetilde{\mathbb{E}}\big[\langle\widetilde{W}^{(2)} G_{ii}^{(2)}\rangle\langle\widetilde{W}^{(2)} G_{ii}^{(2)}\rangle \big]&=  \frac{1}{2n^2}\langle G_{ii}^{(2)}E_1G_{ii}^{(2)}E_2 + G_{ii}^{(2)}E_2G_{ii}^{(2)}E_1 \rangle\\
    &=\frac{\langle G_{iiii}^{(2)} \rangle}{2n^2}= \mathcal{O}_{\prec}\Big(\frac{1}{n^2 \eta_i^3|\beta_1^{(1)}|^2|\beta_i^{(2)}|^2}\Big),\notag
\end{align}
where we use the Lemma \ref{lemma:estimate_of_producr_of_G_ii} in the last step.

Now, we consider the next non self-renormalization term
\begin{align}\label{eq:cumulant2}
    &\sum_{k \geq 2} \frac{\kappa((ba)_2, \alpha_1,\alpha_2)}{k!} {\sum_{a b}}^\prime \sum_{k_1+k_2=k-1} \sum_{\alpha_1, \alpha_2} \mathbb{E}\big[\langle\Delta^{ba} \partial_{\alpha_1} G_{ii}^{(2)}\rangle \langle\Delta^{ba} \partial_{\alpha_2} G_{ii}^{(2)}\rangle \big]\\ 
    &\quad= \mathcal{O}_{\prec}\Big(\frac{1}{n^{3/2} \eta_i^2|\beta_1^{(1)}|^2|\beta_i^{(2)}|^2}\Big),\notag
\end{align}
where we use the estimate $|\kappa((ba)_2, \alpha_1,\alpha_2)|/k! \lesssim n^{-(k+1) / 2}$ of the cumulants from \eqref{eqn: cumulant estimate}. Note that from Lemma \ref{lemma:estimate_of_producr_of_G_ii}, we have the bound $|(G^{(2)}_{ii})_{ab}| \prec 1/\eta_i$. After applying the $\alpha$ derivatives on these terms and applying Lemma \ref{lemma:estimate_of_producr_of_G_ii}, we get back the same bound. This justifies the last inequality in equation \eqref{eq:cumulant2}.

So far, we have examined the non self renormalized terms in the \eqref{eq:cumulant}. To analyze the self renormalized terms, which are listed at the end of the first line, in the second line, and in the last line of \eqref{eq:cumulant}, we do a further cumulant expansion as follows;

\begin{align}\label{eq:cumulant3}
    &\mathbb{E}\big[\langle \Delta^{ba} \partial_{\alpha_1}G_{ii}^{(2)}\rangle \langle \underline{W^{(2)}\partial_{\alpha_2}G_{ii}^{(2)}} \rangle\big]\notag\\
    &= \mathbb{E} \big[\widetilde{\mathbb{E}}\langle \Delta^{ba} \partial_{\alpha_1}(G_i^{(2)}\widetilde{W}^{(2)}G_{ii}^{(2)}+G_{ii}^{(2)}\widetilde{W}^{(2)}G_{i}^{(2)})\rangle \langle \widetilde{W}^{(2)} \partial_{\alpha_2}G_{ii}^{(2)}\rangle \big]\notag\\
    &\;\;\;\;\;+\frac{\gamma_1}{n}{\sum_{cd}}'\mathbb{E}\big[\partial_{dc}\big(\langle \Delta^{dc}\partial_{\alpha_2}G_{ii}^{(2)}\rangle \langle \Delta^{ba}\partial_{\alpha_1}G_{ii}^{(2)} \rangle\big)\big]\notag\\
    &\;\;\;\;\;+\sum_{l\geq 2}{\sum_{cd}}^\prime\sum_{l_{1}+l_{2}=l}\sum_{\vartheta_1,\vartheta_2} \mathbb{E}\big[\langle \Delta^{ba}\partial_{\alpha_1} \partial_{\vartheta_1} G_{ii}^{(2)} \rangle \langle \Delta^{cd}\partial_{\alpha_2} \partial_{\vartheta_2} G_{ii}^{(2)} \rangle\big]\notag\\
    &=\frac{1}{n^2}\mathbb{E}\big[\langle \partial_{\alpha_1}(G_{ii}^{(2)}\Delta^{ba}G_i^{(2)}+G_i^{(2)}\Delta^{ba}G_{ii}^{(2)})\partial_{\alpha_2}(G_{ii}^{(2)})\rangle\big]+\frac{\gamma_1}{n}{\sum_{cd}}'\mathbb{E}\big[\partial_{dc}\big(\langle \Delta^{dc}\\
    &\;\;\;\;\;\partial_{\alpha_2}G_{ii}^{(2)}\rangle\big) \langle \Delta^{ba}\partial_{\alpha_1}G_{ii}^{(2)} \rangle\big]+\frac{\gamma_1}{n}{\sum_{cd}}'\mathbb{E}\big[\langle \Delta^{dc}\partial_{\alpha_2}G_{ii}^{(2)}\rangle \partial_{dc}\big(\langle \Delta^{ba}\partial_{\alpha_1}G_{ii}^{(2)} \rangle\big)\big]\notag\\
    &\;\;\;\;\;+\sum_{l\geq 2}{\sum_{cd}}^\prime\sum_{l_{1}+l_{2}=l}\sum_{\vartheta_1,\vartheta_2} \frac{\kappa(dc, \vartheta_{1}, \vartheta_{2})}{l!}\mathbb{E}\big[\langle \Delta^{ba}\partial_{\alpha_1} \partial_{\vartheta_1} G_{ii}^{(2)} \rangle \langle \Delta^{cd}\partial_{\alpha_2} \partial_{\vartheta_2} G_{ii}^{(2)} \rangle\big],\notag
\end{align}

where $\vartheta_i \in\{(cd)_2, (dc)_2\}^{l_i}.$ After inserting the first line of \eqref{eq:cumulant3} back into \eqref{eq:cumulant}, we obtain an overall factor of $n^{-3-(k+1) / 2}$ as well as the $\sum'_{ab}$ summation over some $\partial_\alpha(\mathcal{G})_{ab}$, where $\mathcal{G}$ is a product of five $G_i^{(2)}$'s. The sum $\sum'_{ab}$ has $\mathcal{O}(n^{2})$ many terms. Therefore, using the bound $\left|\partial_\alpha(\mathcal{G})_{a b}\right| \prec  \eta_i^{-4},$ we can estimate the sum by $n^{2}n^{-9 / 2} \eta_i^{-4}=n^{-5 / 2} \eta_i^{-4}$ since $k \geq 2$.

Now, we turn to the last line of \eqref{eq:cumulant3}. When this is inserted back into \eqref{eq:cumulant}, we obtain a total perfecter of $n^{-(k+l) / 2-3}$, and a summation $\sum'_{a b}\sum'_{c d}$ over
$$
\big(\partial_{\alpha_1} \partial_{\vartheta_1} G_{ii}^{(2)}\big)_{ab}\big(\partial_{\alpha_2} \partial_{\vartheta_2} G_{ii}^{(2)}\big)_{cd}.
$$

Using the fact that $\partial_{ab}G^{(t)}=-G^{(t)}\Delta^{ab}G^{(t)}$, we observe that a single derivative of $G^{(t)}$ yields one $\Delta.$ Therefore, the term $\Delta^{ba}\partial_{\alpha_1} \partial_{\vartheta_1} G_{ii}^{(2)}$ will have $|\alpha_{1}|+|\vartheta_{1}|+1$ many $\Delta$s. Similarly, $\Delta^{cd}\partial_{\alpha_2} \partial_{\vartheta_2} G_{ii}^{(2)} $ will have $|\alpha_{2}|+|\vartheta_{2}|+1$ many $\Delta$s. The indices of $\Delta$s, such as $\Delta^{ab}, \Delta^{cd}$ etc, determine the number of diagonal and off diagonal factors of $G$ in the product. When $k = l = 2$, for $|\alpha_1| + |\vartheta_1| = 3$ or $|\alpha_1| + |\vartheta_1| = 1$, all the entries are diagonal, which gives a bound of $n^{-1} \eta_i^{-2}$. In all other cases, at least two factors are off-diagonal, so the bound is $n^{-1} \eta_i^{-2}$ multiplied by a Ward improvement factor of $(n\eta_i)^{-1}$, which is $\mathcal{O}(n^{-2}\eta_i^{-3})$. Now, when $k+l\geq 5$, the bound are of smaller order.

Therefore, finally we obtain

\begin{align}\label{eq:cumulant4}
& \frac{1}{n^{(k+1) / 2}} \sum_{\substack{k_1+k_2=k, k \geq 2}} \sum_{ab} \sum_{\alpha_1, \alpha_2} \mathbb{E}\big[\langle\Delta^{ba} \partial_{\alpha_1} G_{ii}^{(2)}\rangle \langle \underline{W^{(2)} \partial_{\alpha_2} G_{ii}^{(2)}}\rangle\big]\notag \\
& \quad=\mathcal{O}_{\prec}\Big(\frac{1}{n^2 \eta_i^4|\beta_1^{(1)}|^2|\beta_i^{(2)}|^2}\Big).
\end{align}

Finally, we consider the remaining terms in the first line of \eqref{eq:cumulant}. After applying the cumulant expansion, we obtain the estimate
\begin{equation}\label{eq:cumulant5}
    \frac{1}{n^2} \widetilde{\mathbb{E}}\big[\langle(G_{ii}^{(2)} \widetilde{W}^{(2)} G_i^{(2)}+G_i^{(2)} \widetilde{W}^{(2)} G_{ii}^{(2)})^2\rangle \big]=\mathcal{O}_{\prec}\Big(\frac{1}{n^2 \eta_i^4|\beta_1^{(1)}|^2|\beta_i^{(2)}|^2}\Big).
\end{equation}

The above bound is obtained by piecing the following bounds together; 
$$
\begin{aligned}
|\langle G_{ii}^{(2)} G_{i}^{(2)} \rangle| & \prec \frac{1}{\eta_i^2|\beta_1^{(1)}||\beta_i^{(2)}|},  
\end{aligned}
$$
which are from Lemma \ref{lemma:estimate_of_producr_of_G_ii}.

Similarly, we can estimate the remaining terms. For example, the second line of \eqref{eq:cumulant3} can be estimated in the same way as we estimated the third line of \eqref{eq:cumulant3}, with $|\vartheta_1| \leq 1$ and $|\vartheta_2| = 0$ for the first term, and $|\vartheta_2| \leq 1$ and $|\vartheta_1| = 0$ for the second term. Likewise, the second line of \eqref{eq:cumulant} can be estimated as the last line of \eqref{eq:cumulant} in the cases $|\alpha_1| = 1, |\alpha_2| = 0$ or $|\alpha_1| = 0, |\alpha_2| = 1$. However, the estimates obtained from these will be less than or equal to the previously obtained estimates. Thus, by combining \eqref{eq:cumulant..}-\eqref{eq:cumulant5} we conclude the proof.
\end{proof}    
\end{lemma}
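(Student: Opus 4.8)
The plan is to start from the cyclicity of the normalized trace and of the self-renormalization $\underline{\,\cdot\,}$ to rewrite
$\langle \underline{G^{(t)}_1A^{(t)}_1EG_i^{(s)}A_i^{(s)}W^{(s)}G_i^{(s)}E^{\dag}} \rangle = \langle \underline{W^{(s)}G_{ii}^{(s)}}\rangle$, where $G_{ii}^{(s)}:=G_i^{(s)}E^{\dag}G_1^{(t)}A_1^{(t)}EG_i^{(s)}A_i^{(s)}$ is a product of two resolvents of type $(s)$ at spectral parameter $(z_i,\omega_i)$ and one resolvent of type $(t)$ at $(z_1,\omega_1)$, interspersed with the bounded deterministic matrices $A_1^{(t)},A_i^{(s)},E,E^{\dag}$. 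Because $t\neq s$, the factor $G_1^{(t)}$ depends only on $W^{(t)}$ and is therefore annihilated by every derivative $\partial_{(ab)_s}$; this is exactly what keeps the argument finite-dimensional in the relevant variables. I would then apply the multivariate cumulant expansion of Result \ref{Result:Cumulant Expansion} to $\mathbb{E}\big[|\langle \underline{W^{(s)}G_{ii}^{(s)}}\rangle|^2\big]$ in the entries of $W^{(s)}$, isolating the second-order cumulant part (which, via the Ginibre comparison operator $\mathcal{S}$, reproduces precisely the counterterm subtracted in $\underline{\,\cdot\,}$, leaving only cross terms) from the higher-order part with cumulant order $k\geq 2$.

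The second step is to dispose of the terms that no longer carry a self-renormalization. Using the averaged and isotropic bounds of Lemma \ref{lemma:estimate_of_producr_of_G_ii} one gets $|(G_{ii}^{(s)})_{ab}|\prec \eta_i^{-1}|\beta_1^{(t)}|^{-1}|\beta_i^{(s)}|^{-1}$ and $|\langle G_{iiii}^{(s)}\rangle|\prec \eta_i^{-3}|\beta_1^{(t)}|^{-2}|\beta_i^{(s)}|^{-2}$, so the leading Gaussian contribution equals $(2n^2)^{-1}\langle G_{iiii}^{(s)}\rangle=\mathcal{O}_\prec\big(n^{-2}\eta_i^{-3}|\beta_1^{(t)}|^{-2}|\beta_i^{(s)}|^{-2}\big)$, which is inside the claimed bound; the residual double-derivative terms from the $k\geq 2$ part pick up the cumulant prefactor $n^{-(k+1)/2}$ times a sum $\sum'_{ab}$ of size $\mathcal{O}(n^2)$, hence are $\mathcal{O}_\prec(n^{-5/2}\eta_i^{-4}|\beta_1^{(t)}|^{-2}|\beta_i^{(s)}|^{-2})$ or smaller.

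For the terms that still contain $\langle \underline{W^{(s)}\partial_\alpha G_{ii}^{(s)}}\rangle$ I would iterate the cumulant expansion once more, producing an extra prefactor $n^{-(l+1)/2}$ and a second double sum $\sum'_{cd}$ of size $\mathcal{O}(n^2)$; combined with the first expansion this gives a naive bound of order $n^{-(k+l)/2-3}\cdot n^2\cdot n^2\cdot \eta_i^{-4}$. The decisive point is the parity and Ward-improvement analysis: since $\partial_{ab}G=-G\Delta^{ab}G$ inserts one extra $\Delta$, an odd total number of $\Delta$'s forces at least one off-diagonal resolvent entry, and each off-diagonal factor yields a Ward gain $(n\eta_i)^{-1/2}$; tracking this shows the leading contribution is the all-diagonal $k=l=2$ term, equal to $\mathcal{O}_\prec\big(n^{-2}\eta_i^{-4}|\beta_1^{(t)}|^{-2}|\beta_i^{(s)}|^{-2}\big)$, while every case with $k+l\geq 5$ is of strictly smaller order. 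Assembling the three families of estimates proves the lemma.

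The hard part will be precisely this bookkeeping in the nested expansion: simultaneously keeping exact count of the $\Delta$'s and hence of the diagonal versus off-diagonal resolvent factors, the cumulant orders $k$ and $l$, and the number of Ward improvements, so as to certify that only $k=l=2$ saturates the bound. A secondary subtlety is to control, without worsening the order, the cross-cumulant terms forced by the correlation between $W^{(1)}$ and $W^{(2)}$ (through $\gamma$ and $\rho$), in which $\partial_{(ab)_s}$-derivatives are mixed with $\partial_{(ab)_t}$-derivatives acting on $G_1^{(t)}$; since the entries of each individual matrix are i.i.d., such mixed terms carry additional cumulant smallness and remain below $n^{-2}\eta_i^{-4}$.
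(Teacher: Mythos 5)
Your proposal takes essentially the same route as the paper: cyclicity of the trace and of the self-renormalization to reduce to $\mathbb{E}|\langle \underline{W^{(s)}G_{ii}^{(s)}}\rangle|^2$ with $G_{ii}^{(s)}=G_i^{(s)}E^\dag G_1^{(t)}A_1^{(t)}EG_i^{(s)}A_i^{(s)}$, a cumulant expansion in the entries of $W^{(s)}$ separating the Gaussian second-order part from the $k\ge 2$ tail, the leading term $(2n^2)^{-1}\langle G_{iiii}^{(s)}\rangle$ estimated via the product-of-resolvents bounds, a second nested cumulant expansion for the residual self-renormalized pieces, and a parity/Ward bookkeeping to show that $k=l=2$ saturates $n^{-2}\eta_i^{-4}|\beta_1^{(t)}|^{-2}|\beta_i^{(s)}|^{-2}$ while everything else is smaller.

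One point where your plan is actually \emph{more} careful than the paper's write-up deserves mention. You explicitly flag that, since $w_{ba}^{(s)}$ is correlated with $w_{ab}^{(t)}$ and $w_{ba}^{(t)}$ through $\gamma$ and $\rho$, the multivariate cumulant expansion must also produce derivatives $\partial_{(ab)_t}$, $\partial_{(ba)_t}$ acting on the $G_1^{(t)}$ factor sitting inside $G_{ii}^{(s)}$. The paper's displayed expansion \eqref{eq:cumulant}, however, restricts the index set to $\alpha\in\{(ab)_2,(ba)_2\}^k$, formally omitting the cross terms (even though elsewhere, e.g.\ in \eqref{eq:I_1^{(3)}_final}, the authors do allow the full index set $\{(ab)_1,(ba)_1,(ab)_2,(ba)_2\}^k$). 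These mixed terms carry the same $n^{-(k+1)/2}$ smallness and produce the same kind of $\Delta$-insertions into $G_1^{(t)}$, so they do not change the final order; but your proposal is right that they are present and must be checked, and you correctly predict that the i.i.d.-per-matrix structure ensures they stay below $n^{-2}\eta_i^{-4}$. Your intermediate estimate for the non-self-renormalized $k\ge 2$ terms ($n^{-5/2}\eta_i^{-4}$ vs.\ the paper's $n^{-3/2}\eta_i^{-2}$) is slightly more generous in powers of $\eta_i$ and slightly tighter in powers of $n$, but in the relevant regime $\eta\ge n^{-1+\delta_1}$ both are dominated by the claimed bound, so this is not a gap. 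As you acknowledge, the genuinely delicate part is the nested $\Delta$-counting in the iterated expansion; that bookkeeping must be carried out in detail, but the skeleton you describe is the paper's.
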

Similarly we can prove the following estimate.
\begin{cor}\cite{cipolloni2021fluctuation}\label{cor:overline_cor}Let $G_{i}^{(t)}$, $G_{i1i}^{(t)},$ $A_{i}^{(t)}$ be same as defined in Lemma \ref{lemma:estimate_of_producr_of_G_ii}. Then we have
    \begin{align*}
    & \mathbb{E}|\langle\underline{ G_1^{(t)}A_1^{(t)}EG_i^{(t)} A_i^{(t)}W^{(t)}G_i^{(t)}E^\dag}\rangle|^2=\mathbb{E}|\langle \underline{G_{i1i}^{(t)}}\rangle|^2\\
        &\;\;\;\; \lesssim \Big(\frac{1}{n \eta_1\eta_i\eta_{*}^{1i}|\beta_1^{(t)}||\beta_i^{(t)}|}\Big)^2\notag,
\end{align*}
where $\eta_*^{1i} = \min \{\eta_1,\eta_i\}.$
\end{cor}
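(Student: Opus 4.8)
The plan is to follow the proof of Lemma \ref{lemma:overlinebound} essentially line by line; the only structural novelty is that here all three resolvents in the chain belong to the \emph{same} matrix $X^{(t)}$, so during the cumulant expansion $W^{(t)}$ will also contract with the middle resolvent $G_1^{(t)}$, which was frozen in Lemma \ref{lemma:overlinebound}.

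First I would reduce to a standard shape. Using cyclicity of the normalized trace together with the invariance of the self-renormalization \eqref{eq:underline} under cyclic rotation of a product (the same manipulation used on page \pageref{eq:W^r}), one rewrites
\begin{align*}
\langle\underline{G_1^{(t)}A_1^{(t)}EG_i^{(t)} A_i^{(t)}W^{(t)}G_i^{(t)}E^\dag}\rangle
=\langle\underline{W^{(t)}G_{i1i}^{(t)}}\rangle,\qquad
G_{i1i}^{(t)}:=G_i^{(t)}E^\dag G_1^{(t)}A_1^{(t)}EG_i^{(t)}A_i^{(t)},
\end{align*}
so that $G_{i1i}^{(t)}$ is exactly of the form covered by Lemma \ref{lemma:estimate_of_producr_of_G_ii}: a product of three resolvents of $X^{(t)}$ with spectral parameters $z_i,z_1,z_i$ (imaginary parts $\eta_i,\eta_1,\eta_i$) interspersed with deterministic matrices of norm $\|E\|=\|E^\dag\|=1$, $\|A_1^{(t)}\|\le|\beta_1^{(t)}|^{-1}$, $\|A_i^{(t)}\|\le|\beta_i^{(t)}|^{-1}$. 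It then suffices to bound $\mathbb{E}|\langle\underline{W^{(t)}G_{i1i}^{(t)}}\rangle|^2$, which is the precise analogue of the quantity $\mathbb{E}|\langle\underline{W^{(2)}G_{ii}^{(2)}}\rangle|^2$ treated in Lemma \ref{lemma:overlinebound}, now with $s=t$.

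Next I would expand $\mathbb{E}\big[\langle\underline{W^{(t)}G_{i1i}^{(t)}}\rangle\,\overline{\langle\underline{W^{(t)}G_{i1i}^{(t)}}\rangle}\big]$ in the cumulants of the entries $w_{ab}^{(t)}$ exactly as in \eqref{eq:cumulant}, and treat the self-renormalized remainders by a second cumulant expansion as in \eqref{eq:cumulant3}. The dominant contributions are: (a) the $k=1$ term whose derivative falls on the conjugated copy, which by the Ginibre action of $\mathcal{S}$ equals
\begin{align*}
\widetilde{\mathbb{E}}\big[\langle\widetilde W^{(t)}G_{i1i}^{(t)}\rangle\langle\widetilde W^{(t)}G_{i1i}^{(t)}\rangle\big]
=\frac{1}{2n^{2}}\big\langle G_{i1i}^{(t)}E_1G_{i1i}^{(t)}E_2+G_{i1i}^{(t)}E_2G_{i1i}^{(t)}E_1\big\rangle
=\frac{1}{2n^{2}}\big\langle G_{i1ii1i}^{(t)}\big\rangle,
\end{align*}
where the appearance of conjugated spectral parameters is harmless since the estimates of Lemma \ref{lemma:estimate_of_producr_of_G_ii} depend only on the imaginary parts $\eta_i,\eta_1$; and (b) the self-renormalization-squared term of the type in \eqref{eq:cumulant5}. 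For (a), the averaged bound of Lemma \ref{lemma:estimate_of_producr_of_G_ii}, applied with the two trace positions carrying the minimal parameter $\eta_*^{1i}=\min\{\eta_1,\eta_i\}$ and with the norm bounds on the $A^{(t)}$'s, gives $|\langle G_{i1ii1i}^{(t)}\rangle|\prec\eta_*^{1i}\,\eta_1^{-2}\eta_i^{-4}\,|\beta_1^{(t)}|^{-2}|\beta_i^{(t)}|^{-2}$; since $(\eta_*^{1i})^{3}\le(\eta_*^{1i})^{2}\le\eta_i^{2}$ in the relevant regime $\eta_i\lesssim1$, this is $\lesssim\big(n\eta_1\eta_i\eta_*^{1i}|\beta_1^{(t)}||\beta_i^{(t)}|\big)^{-2}$, the claimed bound. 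Contribution (b) is estimated, as in \eqref{eq:cumulant..}--\eqref{eq:cumulant5}, by piecing together averaged bounds for products of resolvents from the same lemma with the isotropic estimate \eqref{eq:G^iBW^iG^ibound}, and is of the same or smaller order.

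Finally I would show every remaining term is of strictly lower order, which is where the real work is and where I expect the main obstacle to lie. The accounting is identical to that in the proof of Lemma \ref{lemma:overlinebound}: each cumulant of order $k+1$ carries $n^{-(k+1)/2}$ (from \eqref{eqn: cumulant estimate}), each summation $\sum'_{ab}$ carries $\mathcal{O}(n^{2})$, every resolvent factor forced to be off-diagonal by a parity argument carries a Ward gain $(n\eta)^{-1}$ (via the local law and \eqref{eq:G^iBW^iG^ibound}), and each $A^{(t)}$ carries $|\beta^{(t)}_\bullet|^{-1}$. The one new effect, that $W^{(t)}$ now contracts with $G_1^{(t)}$, merely generates a handful of extra terms, all of which carry the additional parameter $\eta_1\ge\eta_*^{1i}$ and are therefore absorbed into the stated error. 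The delicate point is the diagonal/off-diagonal bookkeeping through the repeated $\alpha$-derivatives, ensuring that the a priori dangerous all-diagonal configurations of the type \eqref{eq:DeltaG_DeltaG_DeltaG_DeltaGA} are correctly identified as subleading. Assembling the estimates yields the asserted inequality, exactly as in \cite{cipolloni2021fluctuation}.
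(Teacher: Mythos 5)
Your proposal is correct and takes essentially the same approach that the paper intends: the paper offers no argument for this corollary beyond the phrase ``Similarly we can prove the following estimate,'' deferring entirely to the proof of Lemma~\ref{lemma:overlinebound}, and your write-up simply fleshes out what ``similarly'' means. You correctly identify the one structural difference (that in the $s=t$ case the cumulant derivatives also hit the middle resolvent $G_1^{(t)}$, which was effectively frozen in Lemma~\ref{lemma:overlinebound}), you reduce to $\langle\underline{W^{(t)}G_{i1i}^{(t)}}\rangle$ by the same cyclicity manipulation as \eqref{eq:W^r}, and your computation of the leading Ginibre-contraction term via the averaged bound of Lemma~\ref{lemma:estimate_of_producr_of_G_ii}, together with the reduction $(\eta_*^{1i})^3\le(\eta_*^{1i})^2\le\eta_i^2$, cleanly matches the stated bound $\big(n\eta_1\eta_i\eta_*^{1i}|\beta_1^{(t)}||\beta_i^{(t)}|\big)^{-2}$. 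The only caveat is minor and one you flag yourself: the verification that the new $G_1^{(t)}$-derivative terms and the analogues of \eqref{eq:cumulant3}--\eqref{eq:cumulant5} stay within the stated budget is asserted by bookkeeping analogy rather than carried out term by term, which is precisely the level of detail the paper itself provides here.
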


\section*{Declarations}

\subsection*{Acknowledgments} We would like to thank L\'aszl\'o Erd\H{o}s for helpful discussions. We would also like to thank Giorgio Cipolloni for pointing out some technical errors in the first draft.

\subsection*{Funding} Indrajit Jana's research is partially supported by INSPIRE Fellowship\\DST/INSPIRE/04/2019/000015, Dept. of Science and Technology, Govt. of India.\\

Sunita Rani's research is fully supported by the University Grant Commission (UGC), New Delhi.


\bibliographystyle{abbrv} 

\end{document}